\numberwithin{equation}{section}
\newtheorem{lemma}{Lemma}[section]
\newtheorem{theorem}[lemma]{Theorem}
\newtheorem{corollary}[lemma]{Corollary}
\newtheorem{proposition}[lemma]{Proposition}
\newtheorem{remark}{Remark}
\newtheorem{hypothesis}{Hypothesis}
\newcommand{\thishypname}{}
\newtheorem*{generichypothesis}{\thishypname}
\newenvironment{namedhyp}[1]
{\renewcommand{\thishypname}{#1}
  \begin{generichypothesis}}
  {\end{generichypothesis}}
\newcommand{\R}{\mathbb{R}}
\newcommand{\C}{\mathbb{C}}
\newcommand{\dd}{\, {\rm d}}
\newcommand{\dt}{\frac{{\rm d}}{{\rm d}t}}
\newcommand{\ds}{\displaystyle}
\newcommand{\cM}{\mathcal{M}}
\newcommand{\cP}{\mathcal{P}}
\newcommand{\cL}{\mathcal{L}}
\newcommand{\wv}{\lfloor v \rceil}
\newcommand{\wdot}{\lfloor \cdot \rceil}
\newcommand{\wvp}{\lfloor v' \rceil}
\newcommand{\wu}{\lfloor u \rceil}
\renewcommand{\Re}{\operatorname{Re}}
\renewcommand{\Im}{\operatorname{Im}}
\title[Quantitative Fluid Approximation in Transport
Theory]{Quantitative Fluid Approximation in Transport Theory:\\ A
  Unified Approach}
\author{\'Emeric Bouin}
\email{bouin@ceremade.dauphine.fr}
\address{Université Paris-Dauphine, France}
\author{Cl\'ement Mouhot}
\email{c.mouhot@dpmms.cam.ac.uk}
\address{University of Cambridge, UK}
\date{\today}
\subjclass[2010]{60J60,35Q84,82C40,35B27,60K50,60G52,76P05}
\keywords{transport process; kinetic theory; anomalous diffusion;
  scattering operator; Fokker-Planck operator; Lévy-Fokker-Planck
  operator; spectral theory}
\begin{document}

\begin{abstract}
  We propose a unified method for the large space-time scaling
  limit of \emph{linear} collisional kinetic equations in the
  whole space. The limit is of \emph{fractional} diffusion type
  for heavy tail equilibria with slow enough decay, and of
  diffusive type otherwise. The proof is constructive and the
  fractional/standard diffusion matrix is obtained. % The
  % equilibria satisfy a {\em generalised} weighted mass
  % condition.
  The method combines energy estimates and quantitative spectral
  methods to construct a `fluid mode'. The method is applied to
  scattering models (without assuming detailed balance
  conditions), Fokker-Planck operators and Lévy-Fokker-Planck
  operators. It proves a series of new results, including the
  fractional diffusive limit for Fokker-Planck operators in any
  dimension, for which the formulas for the diffusion coefficient
  were not known, for Lévy-Fokker-Planck operators with general
  equilibria, and for scattering operators including some cases
  of infinite mass equilibria. It also unifies and generalises
  the results of previous papers with a quantitative method, and
  our estimates on the fluid approximation error also seem novel.
\end{abstract}

\maketitle

\tableofcontents

%%%%%%%%%%%%%%%%%%%%%%%%%%%%%%%%%%%%%%%%%%%%%%%%%%%%%%%%%%%%%%%%

\section{Introduction and main results}

The study of \emph{transport processes}, i.e. linear collisional
kinetic equations, is theoretically rooted in the mean-free path
argument of Maxwell~\cite{maxwell1860process} and the kinetic
theory of gases of Maxwell and
Boltzmann~\cite{maxwell1867iv,boltzmann1872}. A linear version of
the Maxwell-Boltzmann equation can be written for the movement of
a tagged particle within a rarefied gas, but the study of such
transport processes was given a crucial new impetus in the
twentieth century with:
\begin{enumerate}
\item the \emph{radiative transfer theory}~\cite{pomraning1973},
  where the kinetic distribution models the flux of photons that
  are transported in the plasma making up the internal layers of
  the sun,
\item the \emph{nuclear reactor theory} (see~\cite{MR0113336},
  the collection~\cite{Birkhoff-Wigner-1961} and in particular
  its fifth chapter~\cite{MR0119519}) where the kinetic
  distribution models the neutrons transported and scattered
  inside the reactor, whose flux is used to initiate and maintain
  the chain reaction,
\item the \emph{semi-conductor theory}~\cite{MR1063852} where the
  kinetic distribution models the flow of charge carriers in
  semiconductors, i.e. the evolution of the position-momentum
  distribution of negatively charged conduction electrons or of
  positively charged holes, which are responsible for the current
  flow in semiconductor crystals.
\end{enumerate}

The main mathematical object of study in \emph{transport theory}
is the linear equation
\begin{equation} \label{eq:kinetic}
  \partial_t f + v\cdot \nabla_x f = \cL f
\end{equation}
on the time-dependent density of particles $f= f(t,x,v) \ge 0$
over $(x,v) \in \R^d \times \R^d$, for $t \ge 0$. The left hand
side accounts for free motion and the right hand side accounts
for the interaction with a background, for instance scatterers,
with an operator $\cL$ that only acts on the kinetic variable
$v$. Several forms are possible. In nuclear reactor, radiative
transfer and semi-conductor theories it is common to consider
\emph{scattering operators}, sometimes also called \emph{linear
  Boltzmann operators}, of the form
\begin{align}
  \label{eq:scattering}
  & \cL f(v) = \left( \int_{\R^d} b(v,v') f(v') \dd v' \right)
    \cM(v) - \nu(v) f(v) \\ \nonumber
  & \text{ for a \emph{collision frequency} } \quad \nu(v)
    := \int_{\R^d} b(v,v') \cM(v') \dd v',
\end{align}
some \emph{collisional kernel} $b=b(v,v')$ and an
\emph{equilibrium distribution} $\cM(v)$. In astrophysics and
sometimes in semi-conductor theory, one also considers
\emph{Fokker-Planck operators},
\begin{equation}
  \label{eq:FP} 
  \mathcal{L} f := \nabla_v \cdot \left( \cM \nabla_v
    \left( \frac{f}{\cM}\right)\right).
\end{equation}
Finally, as a simplified model of long-range collisional
interactions in a gas of charged particles, we also consider
\emph{Lévy-Fokker-Planck operators} (given $s \in (0,1)$):
\begin{equation}
  \label{eq:LFP}
  \begin{cases}
    \mathcal{L}(f)=
    \Delta_v ^s f + \nabla_v\cdot\left(U\,f\right)
    \quad \text{ with $U(v)=U(|v|)$ radially symmetric so that}
    \\[3mm]
    \Delta_v^s \cM + \nabla_v\cdot\left(U\,\cM\right) = 0.
  \end{cases}
\end{equation}
Denoting by $\mathcal{F}$ the Fourier transform, the fractional
Laplacian is defined as
\begin{align}
  \label{eq:fract-Lap}
  \Delta_v^{s} f(v) := - \mathcal{F}^{-1}\left[
  |\cdot|^{2s} \mathcal{F} f(\cdot) \right](v).
\end{align}
These three operators are discussed respectively in
Sections~\ref{sec:scatt}-\ref{sec:kfp}-\ref{sec:lkfp}. Extensions,
such as Fokker-Planck operators with non-gradient force, are
discussed in Section~\ref{sec:appendix}.

The equation~\eqref{eq:kinetic} is too intricate for many
applications. When the relevant time and space scales of
observation are much larger than the mean free time and mean free
path, it is thus natural to search for a simplified regime. The
so-called \emph{diffusion theory} was born out of this endeavour,
and in the words of Wigner~\cite{MR0119519}, `this [diffusion]
theory gives the spatial variation of the [neutron transport]
flux quite accurately in regions well removed from interfaces'.
We also refer to~\cite[Chapter~IX]{MR0113336} for the diffusion
theory of monoenergetic neutrons,
to~\cite[Chapter~III.2]{pomraning1973} for the so-called
\emph{Eddington approximation} in radiative transfer theory, and
to~\cite[Chapter~2]{MR2065070} for a modern mathematical
review. Note that anomalous diffusions and Lévy flights are
observed by biologists and physicists \cite{ariel_swarming_2015,
  TuGrinstein, BarkaiAghion, Marksteiner, SagiBrook}.

We rewrite~\eqref{eq:kinetic} by changing the unknown to $h:= \frac{f}{\cM}$:
\begin{align}
  \label{eq:kinetich}
  \partial_t h + v \cdot \nabla_v h = Lh
  \quad \text{ where } \quad
  Lh:= \cM^{-1} \cL \left( \cM h \right). 
\end{align}
This change of unknown is convenient since asymptotic estimates
compare $f$ with the equilibrium $\cM$. Consider the complex
Hilbert spaces $L^2(\R^d; \cM \dd v) =: L^2_v(\cM)$ and
$L^2(\R^d \times \R^d; \cM \dd x \dd v) =: L^2_{x,v}(\cM)$ and
denote
$\| h \|_ k := \| (1+|\cdot|^2)^{\frac{k}{2}} h \|_{L^2(\cM)}$
(the integration variable(s) will be emphasized when there is
ambiguity). We omit the index when $k=0$. The scalar product
$\langle \cdot, \cdot \rangle$ refers to $L^2_v(\cM)$ or
$L^2_{x,v}(\cM)$ depending on context.

We assume the following hypotheses for some $\alpha, \beta \in \R$ with $\alpha + \beta > 0$, and some $\lambda \in \R_{+}^{*}$:

\begin{hypothesis}[Equilibria]
  \label{hyp:functional}
  The equilibrium $\cM$ takes one of the following two forms.
  \begin{itemize}
  \item[(i)] Either it is given by
    \begin{equation}
      \label{eq:Mpoly}
      \cM(v) = c_{\alpha,\beta} \wv^{-(d+\alpha)} \text{ with }
      c_{\alpha,\beta} := \left( \int_{\R^d} \wv^{-d-\alpha-\beta} \dd v
      \right)^{-1} \text{ and } \wv := \sqrt{1+|v|^2}.
    \end{equation}
  \end{itemize}
  \begin{itemize}
  \item[(ii)] Or it is a smooth positive radially symmetric
    function decaying faster than any polynomial. This case
    is denoted by `$\alpha = +\infty$' in the sequel.
  \end{itemize}
\end{hypothesis}

Note that the normalisation implies the \textbf{generalised mass condition}
\begin{equation}
  \label{eq:gen-mass}
  \int_{\R^d} \cM_\beta(v) \dd v =1 \quad
  \text{ with } \quad \cM_\beta := \wdot^{-\beta}\cM .
\end{equation}

We present our main results assuming that the equilibrium $\cM$
is given by the exact formula~\eqref{eq:Mpoly} in the case of a
polynomial decay because it leads to a neater treatment. However,
as discussed in Section~\ref{sec:appendix}, our results remain
true with an equilibrium $\cM$ that is not an explicit power-law
or even symmetric or centered, but only comparable to
$\wdot^{-(d+\alpha)}$ (see~\eqref{eq:MpolyS} and
Subsections~\ref{subsec:asymp} and~\ref{subsec:non-cent}); this
requires a few technical changes in the proofs that we present
separately in this last section so as not to clutter the paper.

\begin{hypothesis}[Weighted coercivity]
  \label{hyp:coercivity}
  The operator $L$ is linear, independent of time $t$ and space
  $x$, commutes with rotations in $v$, is closed densely defined
  on $\text{{\em Dom}}(L) \subset L^2_v(\cM)$ and satisfies
  $L(1)=L^*(1)=0$, where $L^*$ is the
  $L^2_v(\cM)$-adjoint. Finally
  $\tilde L := \wdot^{\frac{\beta}{2}} L( \wdot^{\frac{\beta}{2}}
  \cdot )$ is closed densely defined on
  $\text{{\em Dom}}(\tilde L) \subset L^2_v(\cM)$, with the
  spectral gap estimate
  \begin{align*}
    \forall \, g \in \mbox{{\em Dom}}(\tilde L), \quad g \, \bot
    \, \wdot^{-\frac{\beta}{2}}, \quad
    - \Re \, \big\langle \tilde L g, g \big\rangle
    \ge \lambda \, \left\| g \right\|^2.
  \end{align*}
  This means, translating back to $L$, 
  \begin{align*}
    & \forall \, h \in \mbox{{\em Dom}}(L), \quad 
      - \Re \, \big\langle L h,h 
      \big\rangle \ge \lambda \, \left\| h - \cP h
      \right\|_{-\beta} ^2 
    & \text{ with  } \quad  \cP h := \left(
      \int_{\R^d} h(v') \cM_\beta(v') \dd v' \right).
  \end{align*}
\end{hypothesis}

The assumption that $\cL$ commutes with rotations in $v$ is
convenient (and satisfied for most physical models), but in fact
only $\cM(v)=\cM(-v)$ is really used in the proof. The latter
could in turn be relaxed at the price of a few technical changes
in the proofs discussed in Section~\ref{sec:appendix}.

\begin{hypothesis}[Amplitude of collisions at large velocities]
  \label{hyp:large-v}
  Given $0 \le \chi \le 1$ a smooth function that is $1$ on
  $B(0,1)$ and $0$ outside $B(0,2)$, and
  $\chi_R=\chi(\frac{\cdot}{R})$ and $\tilde \chi_R=(v\cdot \sigma) \wv^\beta \chi_R$ for $R \ge 1$, 
  \begin{equation*}
     \left\Vert L \left( \chi_R \right) \right\Vert_\beta
     \lesssim R^{-\frac{\alpha+\beta}{2}} \quad \text{and} \quad \left\Vert L \left( \tilde \chi_R \right) \right\Vert_\beta
     \lesssim \begin{cases}
       R^{1+\beta-\frac{\alpha+\beta}{2}} & \text{when } \alpha \in (-\beta,2+\beta) \\
       (\ln R)^{\frac12} & \text{when } \alpha=2+\beta.
     \end{cases}
   \end{equation*}
\end{hypothesis}

Our first result, on the basis of the three previous hypotheses,
is a quantitative construction of a branch of `fluid eigenmode'
in the asymptotic of large time and small spatial frequencies,
i.e. a unique eigenvalue branching from zero for
$\tilde L^*+i\eta \wv^\beta (v\cdot \sigma)$ for small $\eta$
(see Figure~\ref{fig:spectre}):
\begin{lemma}[Construction of the fluid mode]
  \label{lem:existencespectral}
  Given Hypotheses~\ref{hyp:functional}--\ref{hyp:coercivity}--\ref{hyp:large-v},
  there are $\eta_0>0$ and $r_0 \in (0,\lambda)$, explicit in
  terms of the constants in these hypotheses, such that for any
  $\eta \in (0,\eta_0)$ and any $\sigma \in \mathbb{S}^{d-1}$,
  there is a unique solution
  $\phi_\eta =\phi_\eta(v) \in L^2_v(\wdot^{-\beta} \cM)$ and
  $\mu(\eta) \in B(0,r_0)$ to
  \begin{equation*}
    - L^* \phi_{\eta} - i \eta (v \cdot \sigma) \phi_{\eta}=
    \mu(\eta) \wv^{-\beta} \phi_{\eta} \quad \text{ with } \quad
    \int_{\R^d} \phi_{\eta}(v) \, \cM_\beta(v) \dd v = 1.
  \end{equation*}
Moreover, the branch $(\phi_\eta,\mu(\eta))$ connects to
  $\left( 1, 0 \right)$ as $\eta \to 0$, with $\mu(\eta) >0$ and
  the asymptotics
  \begin{equation}
    \label{eq:phi-beta}
    \Vert \phi_\eta -1 \Vert_{-\beta} \lesssim \mu(\eta)^\frac12
    \quad
    \text{ and } \quad \mu(\eta) \in (\textsc{r}_0 \Theta(\eta),
    \textsc{r}_1 \Theta(\eta))
  \end{equation}
  for some $0<\textsc{r}_0<\textsc{r}_1$, where the function
  $\Theta$ is defined by
  \begin{equation}
    \label{eq:def-Theta}
    \Theta(\eta) :=
    \begin{cases}
      \eta^2
      &\text{when } \alpha > 2 + \beta,\\[2mm]
      \eta^2 \vert \ln(\eta) \vert
      &\text{when } \alpha = 2 + \beta,\\[2mm]
      \eta^{\frac{\alpha+\beta}{1+\beta}} &\text{when } -\beta<\alpha
      < 2 + \beta.
    \end{cases}
  \end{equation}
\end{lemma}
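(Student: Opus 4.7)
The plan is a Lyapunov--Schmidt reduction around the kernel $\{1\}$ of $L^*$, exploiting the weighted coercivity of Hypothesis~\ref{hyp:coercivity}, a Banach fixed point to construct the branch, and a regime-dependent asymptotic analysis to extract $\Theta(\eta)$.

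First, I would symmetrise by setting $g := \wv^{-\beta/2}\phi$ and $g_0 := \wv^{-\beta/2}$, so that the eigenvalue problem reads
\[ (-\tilde L^* - i\eta \wv^\beta (v\cdot\sigma))\,g = \mu\,g,\qquad \langle g, g_0\rangle_{L^2(\cM)} = 1, \]
with $\tilde L^* g_0 = 0$ (because $L^*1=0$) and $\|g_0\|^2_{L^2(\cM)} = \int \cM_\beta \dd v = 1$ by~\eqref{eq:gen-mass}. Decomposing $g = g_0 + g^\perp$ with $g^\perp \perp g_0$, pairing the equation with $g_0$ and using $\int(v\cdot\sigma)\cM\dd v = 0$ produce the scalar identity
\[ \mu = -i\eta \int_{\R^d} (v\cdot\sigma)\,\wv^{\beta/2}\,g^\perp\,\cM\dd v, \]
while projecting onto $g_0^\perp$ (via $\pi^\perp$) gives
\[ (-\tilde L^* - \mu)\,g^\perp = i\eta\,\pi^\perp\!\left[\wv^\beta (v\cdot\sigma)(g_0+g^\perp)\right]. \]

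For $\Re\mu \in [0, r_0]$ with $r_0 < \lambda$, Hypothesis~\ref{hyp:coercivity} makes $-\tilde L^* - \mu$ coercive on $g_0^\perp$ with constant $\lambda - \Re\mu > 0$, and Lax--Milgram produces a map $\mu \mapsto g^\perp(\mu,\eta)$ solving the above, provided the source can be interpreted in the relevant dual. In the light-tail case $\alpha > 2+\beta$, $\wv^{\beta/2}(v\cdot\sigma)$ already lies in $L^2(\cM)$ and no extra care is needed; otherwise I would truncate by $\chi_R$ and use Hypothesis~\ref{hyp:large-v} to control the commutator with $\tilde L^*$. Substituting $g^\perp(\mu,\eta)$ into the scalar identity yields a contraction $\mu = \Phi(\mu,\eta)$ on $B(0,r_0)$ for $\eta < \eta_0$, and Banach's theorem provides the unique branch. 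The a~priori estimate $\|\phi_\eta-1\|_{-\beta} \lesssim \mu^{1/2}$ then follows directly: pairing the eigenvalue equation with $g$ and taking real parts gives $\lambda\|g^\perp\|^2_{L^2(\cM)} \le \Re\mu\,(1+\|g^\perp\|^2_{L^2(\cM)})$, together with the identity $\|\phi-1\|_{-\beta} = \|g^\perp\|_{L^2(\cM)}$.

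The substantive step is the three-regime asymptotic for $\mu$. At leading order $g^\perp \approx i\eta(-\tilde L^*)^{-1}\pi^\perp[\wv^{\beta/2}(v\cdot\sigma)]$, whence
\[ \mu \approx \eta^2\,\big\langle \wv^{\beta/2}(v\cdot\sigma),\,(-\tilde L^*)^{-1}\pi^\perp[\wv^{\beta/2}(v\cdot\sigma)]\big\rangle_{L^2(\cM)}, \]
positive by coercivity and reality of the symbol. When $\alpha > 2+\beta$ this integral is finite, yielding $\mu \asymp \eta^2$. When $-\beta < \alpha < 2+\beta$ the source fails to be $L^2(\cM)$; I would keep $\mu$ as a regulariser inside $(-\tilde L^* - \mu)^{-1}$, split at $|v|=R$, bound the tail via Hypothesis~\ref{hyp:large-v}, and balance the transport scale $\eta R^{1+\beta}$ against the coercive gap. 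The optimum $R \sim \eta^{-1/(1+\beta)}$ produces $\mu \asymp \eta^{(\alpha+\beta)/(1+\beta)}$. At the marginal $\alpha = 2+\beta$ the same splitting gives a logarithmically divergent bulk integral over the corresponding range, producing the $|\ln\eta|$ correction. The main obstacle is exactly this heavy-tail analysis: identifying the optimal $R(\eta)$, controlling the truncation commutator via Hypothesis~\ref{hyp:large-v}, and extracting the sharp fractional exponent together with the logarithmic criticality.
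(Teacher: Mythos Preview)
Your strategy is the classical Lyapunov--Schmidt/fixed-point route (in the spirit of Ellis--Pinsky and Nicolaenko), and it is \emph{genuinely different} from the paper's proof, which proceeds via resolvent estimates and Riesz spectral projections. For the light-tail regime $\alpha > 2+\beta$ your argument would go through essentially as written: $\wv^{\beta/2}(v\cdot\sigma)\in L^2(\cM)$, the source in the $g^\perp$ equation is admissible, the contraction closes, and the paper itself acknowledges this as the approach of ``the older works''.

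The gap is in the heavy-tail regime $\alpha\le 2+\beta$. There the multiplication operator $M:=\wv^\beta(v\cdot\sigma)$ is unbounded on $L^2(\cM)$, and this breaks your scheme in two places. First, the source $i\eta\,\pi^\perp[\wv^{\beta/2}(v\cdot\sigma)]$ in your $g^\perp$ equation is not in $L^2(\cM)$, so Lax--Milgram does not apply as stated. Second, even if you interpret the equation weakly, the iteration $g^\perp\mapsto i\eta(-\tilde L^*-\mu)^{-1}\pi^\perp[M(g_0+g^\perp)]$ cannot contract in $L^2(\cM)$ because $(-\tilde L^*-\mu)^{-1}\pi^\perp M$ has infinite operator norm. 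Your truncation idea is headed in the right direction, but it does not close on its own: after truncating at scale $R$ you obtain an eigenvalue $\mu_R(\eta)$ for a \emph{modified} operator, and you still owe an argument that $\mu_R(\eta)$ converges to an eigenvalue of the true operator as $R\to\infty$ along $R=\eta^{-1/(1+\beta)}$. That stability statement is precisely what requires the resolvent control you have not established.

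The paper avoids this obstruction by never splitting off the kernel direction as an operator equation. Instead it works directly with the full resolvent $(\tilde L_\eta^*-z)^{-1}$: maximal dissipativity of $\tilde L^*$ plus approximation by the bounded perturbations $i\eta\wv^\beta(v\cdot\sigma)\chi_M$ gives existence of the resolvent once an a~priori bound is in hand. Those a~priori bounds are obtained by integrating the resolvent equation against carefully chosen localized test functions $\chi_R$ and $(v\cdot\sigma)\wv^\beta\mathfrak{K}_R$ at scale $R=\eta^{-1/(1+\beta)}$ (this is where Hypothesis~\ref{hyp:large-v} enters), yielding quantitative control of both $\cP F$ and $\cP^\perp F$. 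This localisation shows the resolvent exists for $|z|<\textsc{r}_0\Theta(\eta)$ and for $\textsc{r}_1\Theta(\eta)<|z|<r_0$, pinning any eigenvalue into the annulus. Existence and uniqueness then follow from the projection estimate $\|\Pi_{r,\eta}-\Pi_{r,0}\|_{L^2(\cM)\to L^2(\cM)}<1$ and rank stability, rather than from a fixed point. Reality and positivity of $\mu(\eta)$ come from the symmetry $(\psi_\eta,\mu)\mapsto(\overline{\psi_\eta(-\cdot)},\overline{\mu})$ together with dissipativity, a point you should also address. Your heuristic for the scale $R\sim\eta^{-1/(1+\beta)}$ and the resulting $\Theta(\eta)$ is correct and matches the paper's optimisation; what is missing is the mechanism that makes the construction itself go through when $M$ is unbounded.
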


\begin{figure}[!ht]
  \label{fig:spectre}
  \includegraphics{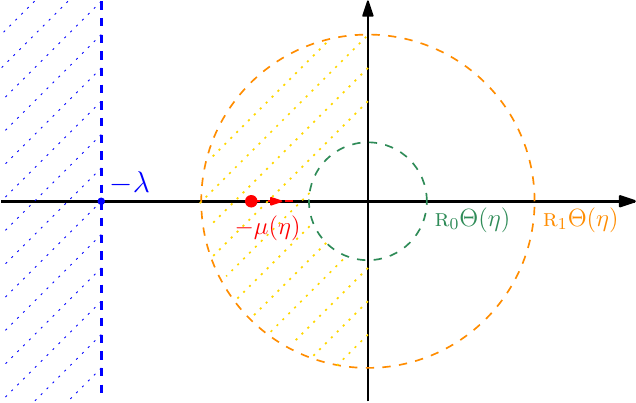}
  \caption{The blue dashed zone on the left of
    $\Re z = - \lambda$ corresponds to the spectral gap estimates
    on $\tilde L^*+i\eta \wv^\beta (v\cdot \sigma)$ for
    $g \, \bot \, \wdot^{-\frac{\beta}{2}}$
    (Hypothesis~\ref{hyp:coercivity}). The yellow dashed zone is
    where
    Lemmas~\ref{lem:existencespectral}-\ref{lem:ratespectral}
    construct a unique real eigenvalue
    $-\mu(\eta) \sim -\mu_0 \Theta(\eta)$ of the operator $\tilde L^*+i\eta \wv^\beta (v\cdot \sigma)$, that goes to zero as $\eta \to 0$.}
\end{figure}

Note that $\Theta$ is well-defined in the case
$\alpha \in (-\beta,2+\beta)$ since
$(1+\beta) > (\alpha+\beta)/2>0$. In this lemma and in the rest
of the paper the dependency in $\sigma$ is kept implicit rather
than explicit in order to lighten notation. In fact, $\phi_\eta$
also depends on $\sigma$, but $\mu(\eta)$ does not if $L$ is
invariant by rotations in $v$.  To identify the macroscopic limit
with quantitative rates and constants, it is necessary to
estimate the leading order of $\mu(\eta)$, and this requires
estimates on the eigenvector, which is our last hypothesis. We
denote
$|u|_\eta := ( \eta^{\frac{2}{1+\beta}} + |u|^2)^{\frac12}$.

\begin{hypothesis}[Scaling of the fluid mode]
  \label{hyp:scalinginfinite}
  We make different assumptions depending on $\alpha$:
  \begin{itemize}
  \item[(i)] \underline{Case $\alpha > 2+\beta$}: The fluid mode
    $\phi_\eta$ constructed in Lemma~\ref{lem:existencespectral}
    satisfies
  \begin{align*}
    \forall \, \ell < \alpha, \quad
    \left\| \phi_\eta  \right\|_{\ell} \lesssim_\ell 1. 
  \end{align*}

\item[(ii)] \underline{Case $\alpha \in (-\beta,2+\beta]$}: The
  rescaled fluid mode
  $\Phi_\eta := \phi_\eta(\eta^{-\frac{1}{1+\beta}} \cdot )$ is
  converging in
  $L_{\text{\tiny {\em loc}}}^2(\R^d \backslash{0})$ as
  $\eta \to 0$ to a limit $\Phi$ and satisfies the pointwise
  controls
    \begin{equation}
      \label{eq:mmt-fract}
      \forall \, \eta \in (0,\eta_1), \ \forall \, u \in \R^d,
      \quad
      \begin{cases}
        \left| \Phi_\eta(u) \right| \lesssim |u|_\eta^{C
          \mu(\eta)}, \\[2mm]
        \left| \Im \Phi_\eta(u) \right| \lesssim
        |u|_\eta^{\beta+\min(\alpha ,1)-\delta}
      \end{cases}
    \end{equation}
    for some $\eta_1 \in (0,\eta_0)$ and $C>0$ and
    $\delta < \beta + \min(2- \alpha,1)$. We also make the
    following additional assumptions in the two following
    subcases:
  \begin{itemize}
  \item[(ii)-(a)] \underline{Case $\alpha =2 + \beta$}: There are
    $\mathfrak{a} : \R^+ \to \R^+$, satisfying $\lim_{\eta \to 0}
    \mathfrak{a}(\eta) = 0$ and $\Omega : \R^d \to \R$ locally integrable
    such that
    \begin{equation*}
      \begin{dcases}
        \left\vert \int_{1 \geq \vert u \vert \geq
            \eta^{\frac{1}{1+\beta}}} (u \cdot \sigma) \Big[
          \Im\Phi_\eta(u) - \Im\Phi(u) \Big] |u|_\eta
          ^{-d-\alpha} \dd
          u \right\vert \leq \mathfrak{a}(\eta) |\ln(\eta)|, \\
        \forall \, \sigma' \in \mathbb{S}^{d-1}, \quad \frac{\Im
          \Phi \left( \lambda \sigma' \right)}
        {\lambda^{1+\beta}} \xrightarrow[\lambda \to 0]{\lambda
          \not = 0} \Omega(\sigma') \quad \text{ in } \quad
        L^1(\mathbb{S}^{d-1}).
      \end{dcases}
    \end{equation*}
    \smallskip

  \item[(ii)-(b)] \underline{Case $\alpha \in (-\beta,\beta]$}: The
    additional following integral control holds:
    \begin{equation}
      \label{eq:mmt-fract-bis}
      \int_{\vert u \vert \geq 1} \left|\Phi_\eta(u)\right|^2
      |u|_\eta ^{-d-\alpha+\beta} \dd u  \lesssim 1.
    \end{equation}
  \end{itemize}
\end{itemize}
\end{hypothesis}
Note that in~\eqref{eq:mmt-fract},
$|u|_\eta^{C \mu(\eta)} \sim 1$ as $\eta \to 0$ in the region
$|u| \lesssim \eta^{\frac{1}{1+\beta}}$. Note also
that~\eqref{eq:mmt-fract} and~\eqref{eq:phi-beta} imply
$\Phi(0)=1$. The case (a) in~(ii) above is subtle and made
necessary by the fact that the case $\alpha = 2 + \beta$ is
borderline between two different regimes (standard diffusion
vs. fractional diffusion) as well as borderline between two
different scalings for obtaining the diffusion coefficient (fluid
mode in variable $v$ vs. fluid mode in the rescaled variable
$u=\eta^{-\frac{1}{1+\beta}}v$).

With these four hypotheses we can characterise the precise
scaling of the fluid eigenvalue:
\begin{lemma}[Rescaled limit of the fluid eigenvalue]
  \label{lem:ratespectral}
  Assume  Hypotheses~\ref{hyp:functional}--\ref{hyp:coercivity}--\ref{hyp:large-v}--\ref{hyp:scalinginfinite}. The eigenvalue
  $\mu(\eta)$ constructed in Lemma \ref{lem:existencespectral}
  satisfies (with convergence rate explicit in terms of the
  constants, error terms and convergence rates in the hypotheses)
  \begin{equation}
    \label{eq:scalingmu}
    \mu(\eta)\sim_{\eta \to 0} \mu_0 \Theta(\eta),
  \end{equation}
  where the constant $\mu_0 \in (\textsc{r}_0,\textsc{r}_1)$ is
  positive and determined as follows:
  \begin{equation*}
    \begin{dcases}
      & \mu_0 := \int_{\R^d} \left(v \cdot \sigma \right)
      F(v) \cM(v) \dd v \quad \text{when } \alpha > 2+\beta, \\
      & \qquad \text{where } \ F = \lim_{\eta \to 0} \frac{\Im
        \phi_\eta}{\eta} \text{ is solution to } L F = - (v\cdot
      \sigma) \ \text{ and } \
      \int_{\R^d} F(v)\, \cM_\beta(v) \dd v =0, \\
      & \mu_0 := \frac{c_{2+\beta,\beta}}{1+\beta}
      \int_{\mathbb{S}^{d-1}} (\sigma \cdot \sigma')
      \Omega(\sigma')
      \dd \sigma' \quad \text{when } \alpha = 2 + \beta, \\
      & \qquad \text{where } \ \Omega(u) = \lim_{\lambda \to 0, \
        \lambda \not = 0} \frac{\Im \Phi \left( \lambda u
        \right)}{\lambda^{1+\beta}} \ \text{ and } \ \Phi =
      \lim_{\eta \to 0} \Phi_\eta = \lim_{\eta \to 0}
      \phi_\eta \left( \eta^{-\frac{1}{1+\beta}} \cdot \right),\\
      & \mu_0 := c_{\alpha,\beta} \int_{\R^d} (u \cdot \sigma)
      \Im\Phi(u)|u|^{- d -\alpha} \dd u \quad \text{when } \alpha
      \in (-\beta,2+\beta).
    \end{dcases}
  \end{equation*}
\end{lemma}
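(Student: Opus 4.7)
The strategy is to extract a single scalar identity for $\mu(\eta)$ from the eigenvalue equation of Lemma~\ref{lem:existencespectral} and then pass to the limit $\eta\to 0$ in each of the three regimes. Pairing the eigenvalue equation with the constant function $1$ in $L^2_v(\cM)$ and using $L(1)=0$, hence $\langle L^*\phi_\eta,1\rangle_{L^2(\cM)}=\langle\phi_\eta,L(1)\rangle_{L^2(\cM)}=0$, together with the normalisation $\int\phi_\eta\cM_\beta\dd v=1$, yields $\mu(\eta)=-i\eta\int(v\cdot\sigma)\phi_\eta\cM\dd v$. Since $L$ commutes with rotations and $\cM$ is even, the branch of Lemma~\ref{lem:existencespectral} satisfies $\phi_\eta(-v,\sigma)=\phi_\eta(v,-\sigma)=\overline{\phi_\eta(v,\sigma)}$ by uniqueness, so $\Re\phi_\eta$ is even and $\Im\phi_\eta$ is odd in $v$; the odd integrand involving $\Re\phi_\eta$ thus vanishes, producing the master identity
\begin{equation*}
\mu(\eta)=\eta\int_{\R^d}(v\cdot\sigma)\,\Im\phi_\eta(v)\,\cM(v)\dd v.
\end{equation*}

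In the regime $\alpha>2+\beta$, the moment bounds $\|\phi_\eta\|_\ell\lesssim_\ell 1$ for $\ell<\alpha$ from Hypothesis~\ref{hyp:scalinginfinite}(i) give uniform integrability against $(v\cdot\sigma)\cM$. Dividing the imaginary part of the eigenvalue equation by $\eta$ gives
\begin{equation*}
-L^*\!\left(\tfrac{\Im\phi_\eta}{\eta}\right)=(v\cdot\sigma)\Re\phi_\eta+\mu(\eta)\wv^{-\beta}\,\tfrac{\Im\phi_\eta}{\eta},
\end{equation*}
whose right-hand side converges to $(v\cdot\sigma)$ because $\mu(\eta)=O(\eta^2)$ by Lemma~\ref{lem:existencespectral}. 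A Fredholm alternative in $L^2_v(\wdot^{-\beta}\cM)$, based on the weighted coercivity of Hypothesis~\ref{hyp:coercivity} on the orthogonal of $\wdot^{-\beta/2}$, identifies a unique limit $F:=\lim_{\eta\to 0}\Im\phi_\eta/\eta$ with $\int F\cM_\beta\dd v=0$ solving the Poisson-type equation stated in the lemma; dividing the master identity by $\eta^2$ and passing to the limit yields $\mu(\eta)/\eta^2\to\int(v\cdot\sigma)F\cM\dd v=\mu_0$.

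For the rescaled regime $\alpha\in(-\beta,2+\beta]$, the change of variable $v=\eta^{-1/(1+\beta)}u$ in the master identity, combined with the algebraic factorisation
\begin{equation*}
\left(1+\eta^{-2/(1+\beta)}|u|^2\right)^{-(d+\alpha)/2}=\eta^{(d+\alpha)/(1+\beta)}\,|u|_\eta^{-(d+\alpha)},
\end{equation*}
recasts it (using $\Phi_\eta(u)=\phi_\eta(\eta^{-1/(1+\beta)}u)$) as
\begin{equation*}
\mu(\eta)=c_{\alpha,\beta}\,\eta^{(\alpha+\beta)/(1+\beta)}\int_{\R^d}(u\cdot\sigma)\,\Im\Phi_\eta(u)\,|u|_\eta^{-(d+\alpha)}\dd u.
\end{equation*}
When $\alpha<2+\beta$, the two pointwise controls in Hypothesis~\ref{hyp:scalinginfinite}(ii) --- the second used near the origin where $|u|_\eta\sim\eta^{1/(1+\beta)}$, the first (together with the $L^2$ integrability~\eqref{eq:mmt-fract-bis} when $\alpha\in(-\beta,\beta]$) away from the origin --- provide a dominating envelope, and the local convergence $\Phi_\eta\to\Phi$ in $L^2_{\text{loc}}(\R^d\setminus\{0\})$ transfers to the integral by dominated convergence, producing $\mu_0=c_{\alpha,\beta}\int(u\cdot\sigma)\Im\Phi(u)|u|^{-(d+\alpha)}\dd u$.

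The main obstacle is the critical case $\alpha=2+\beta$: the same integrand diverges logarithmically from the middle region $\eta^{1/(1+\beta)}\lesssim|u|\lesssim 1$, since $\Im\Phi(u)\sim|u|^{1+\beta}\Omega(u/|u|)$ near the origin while the weight behaves like $|u|^{-d-2-\beta}$. One splits into three zones: the inner zone $|u|<\eta^{1/(1+\beta)}$ and the outer zone $|u|>1$, controlled by the two pointwise bounds~\eqref{eq:mmt-fract} respectively, contribute $o(|\ln\eta|)$ that is absorbed by the main divergence. In the middle zone one decomposes $\Im\Phi_\eta=\Im\Phi+(\Im\Phi_\eta-\Im\Phi)$ and discards the error using the first assumption of Hypothesis~\ref{hyp:scalinginfinite}(ii)-(a), namely $\mathfrak{a}(\eta)|\ln\eta|=o(|\ln\eta|)$; passing to polar coordinates in the remaining piece and using the $L^1(\mathbb{S}^{d-1})$ convergence of $\Im\Phi(\rho\sigma')/\rho^{1+\beta}$ to $\Omega(\sigma')$ isolates the leading logarithm with prefactor $(1+\beta)^{-1}\int_{\mathbb{S}^{d-1}}(\sigma\cdot\sigma')\Omega(\sigma')\dd\sigma'$; dividing by $\eta^2|\ln\eta|$ then yields the stated $\mu_0$.
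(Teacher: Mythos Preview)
Your overall plan is on the right track, and for $\alpha>2+\beta$ it matches the paper's argument. But in the regime $\alpha\in(-\beta,2+\beta)$ there is a genuine gap: the ``master identity'' $\mu(\eta)=\eta\int_{\R^d}(v\cdot\sigma)\Im\phi_\eta\,\cM\dd v$, obtained by pairing the eigen-equation with the constant $1$, is not justified by the abstract hypotheses. After rescaling, you need the integral $\int_{\R^d}(u\cdot\sigma)\Im\Phi_\eta(u)\,|u|_\eta^{-(d+\alpha)}\dd u$ to converge absolutely, and the ``dominating envelope'' you invoke does not exist in general. The first pointwise bound in~\eqref{eq:mmt-fract} gives only $|\Phi_\eta(u)|\lesssim|u|_\eta^{C\mu(\eta)}$, so the integrand at infinity is $\sim|u|^{1+C\mu(\eta)-d-\alpha}$, which is \emph{not} integrable when $\alpha\le 1$. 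For $\alpha\in(-\beta,\beta]$ you appeal to~\eqref{eq:mmt-fract-bis}, but Cauchy--Schwarz against that weight produces the factor $\bigl(\int_{|u|\ge1}|u|^{2-d-\alpha-\beta}\dd u\bigr)^{1/2}$, finite only when $\alpha+\beta>2$, which fails for instance whenever $\beta\le 1$. So for a nonempty range of parameters (e.g.\ scattering with $\beta<1$ and $\alpha\in(\beta,1]$) your integral simply diverges, and the identity you start from is formal.

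The paper circumvents this by pairing the eigen-equation not with $1$ but with a smooth cutoff $\chi(\cdot\,\eta^{1/(1+\beta)})$ supported in $|u|\le 2R_0$. This introduces two boundary error terms, $\langle\wv^{-\beta}\Re\phi_\eta,\chi-1\rangle$ and $\langle\Re\phi_\eta-1,L(\chi)\rangle$; the first is controlled by the tail of $\cM_\beta$, and the second by Hypothesis~\ref{hyp:large-v} (which you never invoke in this regime) combined with $\|\phi_\eta-1\|_{-\beta}\lesssim\mu(\eta)^{1/2}$ from Lemma~\ref{lem:existencespectral}. Both errors are $O(R_0^{-(\alpha+\beta)/2})$ uniformly in $\eta$, so one gets
\[
\left|\frac{\mu(\eta)}{\Theta(\eta)}-c_{\alpha,\beta}\int_{\R^d}(u\cdot\sigma)\Im\Phi_\eta(u)\,|u|_\eta^{-(d+\alpha)}\chi(u)\dd u\right|\lesssim R_0^{-(\alpha+\beta)/2},
\]
with the truncated integral converging as $\eta\to0$ for each fixed $R_0$ by the pointwise bounds on the compact support of $\chi$. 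Taking the double limit $\eta\to0$ then $R_0\to\infty$ yields the result; the integral defining $\mu_0$ is thus understood as the limit of these truncated integrals, whose existence follows \emph{a posteriori} from $\mu(\eta)/\Theta(\eta)\in(\textsc{r}_0,\textsc{r}_1)$. The fix to your argument is therefore to insert exactly this cutoff and use Hypothesis~\ref{hyp:large-v} to control the commutator with $L$.
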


Note how, when $\alpha > 2+\beta$, the function $F$ used in the previous works on standard diffusive limit (usually with $\beta=0$) is recovered here as a limit of our fluid mode; this allows our proof to track the convergence rate.

We now assume $\alpha \ge 0$ and define the \emph{diffusion
  exponent}
\begin{equation}
  \label{eq:exponent}
  \zeta = \zeta(\alpha,\beta):=
  \begin{cases}
    2 & \text{ when } \alpha \ge 2+\beta \\[2mm] \ds
    \frac{\alpha +\beta}{1+\beta}
    & \text{ when } \alpha < 2+\beta, 
  \end{cases}
\end{equation}
and the \emph{scaling
  function}
\begin{equation}
  \label{eq:scaling-function}
  \theta(\varepsilon) := 
  \begin{cases} \ds
    \varepsilon^{\zeta}
    & \text{ when } \alpha \in (0,+\infty]
    \setminus \{2+\beta\},
    \\[3mm] \ds
    \varepsilon^2 \vert \ln \varepsilon \vert
    & \text{ when } \alpha = 2+\beta,\\[3mm] \ds
    \frac{\varepsilon^{\frac{\beta}{1+\beta}}}{\vert \ln \varepsilon \vert}
    & \text{ when } \alpha = 0.
  \end{cases}
\end{equation}
Note that the threshold $\alpha=2+\beta$ between standard and
fractional diffusion corresponds to whether or not $\cM_\beta$
has finite variance. We finally derive the \emph{diffusion
  coefficient}:

\begin{lemma}[Diffusion coefficient]\label{lem:diffcoeff}
  Assume Hypotheses~\ref{hyp:functional}--\ref{hyp:coercivity}--\ref{hyp:large-v}--\ref{hyp:scalinginfinite}
  and $\alpha \ge 0$. Then the following limit holds true with
  convergence rate explicit in terms of the constants, error
  terms and convergence rates in the hypotheses: for any
  $\xi \in \R^d \setminus \{0\}$,
  \begin{align}
    \label{eq:coeff}
    \kappa := \lim_{\varepsilon \to 0} \left( \frac{\mu(\varepsilon |\xi|)
    \vert \xi \vert^{-\zeta}}{\theta(\varepsilon)\left\langle 1,
    \phi_{\varepsilon |\xi|} \right\rangle} \right)
    = \mu_0 \times
    \begin{cases}
      \ds \| \cM \|_{L^1(\R^d)}^{-1}
      &\text{ when } \alpha > 0, \\[4mm]
      \ds
      \frac{1+\beta}{|\mathbb{S}^{d-1}|}
      &\text{ when } \alpha =0.
    \end{cases}
  \end{align}
\end{lemma}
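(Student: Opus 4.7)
The plan is to substitute $\eta = \varepsilon |\xi|$, apply Lemma~\ref{lem:ratespectral} to the numerator, and analyse separately the rescaling of $\mu$ and the limit of the denominator $\langle 1, \phi_{\varepsilon|\xi|}\rangle$. For the numerator: from $\mu(\eta) \sim \mu_0 \Theta(\eta)$ and the explicit forms of $\Theta$ in \eqref{eq:def-Theta} and of $\theta$ in \eqref{eq:scaling-function}, a case check shows that $\Theta(\varepsilon |\xi|) |\xi|^{-\zeta} / \theta(\varepsilon) \to 1$ when $\alpha \in (0,+\infty]\setminus\{2+\beta\}$ (exactly), and also when $\alpha = 2+\beta$ using $|\ln(\varepsilon|\xi|)| / |\ln\varepsilon| \to 1$; hence in both regimes the numerator ratio converges to $\mu_0$. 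For $\alpha = 0$ instead, $\Theta(\varepsilon|\xi|)|\xi|^{-\zeta}/\theta(\varepsilon) = |\ln\varepsilon|\, (1+o(1))$ and this extra $|\ln\varepsilon|$ will be cancelled by an equal logarithmic divergence of the denominator.

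When $\alpha > 0$, the aim is to prove $\langle 1, \phi_\eta\rangle \to \|\cM\|_{L^1}$. Split $\int_{\R^d}\phi_\eta \cM\dd v = \int_{|v|\le R} + \int_{|v|>R}$. On the inner piece, Cauchy–Schwarz in $L^2(\cM)$ applied to $(\phi_\eta - 1)\wdot^{-\beta/2}$ and $\wdot^{\beta/2}$ combined with $\|\phi_\eta-1\|_{-\beta}\lesssim \mu(\eta)^{1/2}\to 0$ (equation \eqref{eq:phi-beta}) and $\int_{|v|\le R}\wv^\beta \cM \dd v<\infty$ gives convergence to $\int_{|v|\le R}\cM$. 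For the outer piece one wants a uniform-in-$\eta$ tail bound: if $\alpha > 2+\beta$, this follows from the moment control of Hypothesis~\ref{hyp:scalinginfinite}(i) via $\int_{|v|>R}\wv^{-2\ell}\cM \dd v\to 0$; if $0<\alpha\le 2+\beta$, undoing the rescaling $u=\eta^{1/(1+\beta)}v$ in the pointwise bound \eqref{eq:mmt-fract} yields $|\phi_\eta(v)|\lesssim \wv^{C\mu(\eta)}$, so the tail is $\lesssim R^{C\mu(\eta)-\alpha}\to 0$ since $\mu(\eta)\to 0$. Sending $\eta\to 0$ and then $R\to\infty$ gives the limit, and Step~1 then yields $\kappa = \mu_0/\|\cM\|_{L^1}$.

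When $\alpha = 0$, $\cM\notin L^1$ and one works in the rescaled variable. The change $v=\eta^{-1/(1+\beta)}u$ gives
\[
\langle 1,\phi_\eta\rangle = c_{0,\beta}\int_{\R^d}\Phi_\eta(u)|u|_\eta^{-d}\dd u.
\]
On $\{|u|>1\}$, Cauchy–Schwarz with Hypothesis~\ref{hyp:scalinginfinite}(ii)-(b) (applicable since $\alpha=0\in(-\beta,\beta]$) against the convergent $\int_{|u|>1}|u|^{-d-\beta}\dd u$ bounds this piece by $O(1)$. On $\{|u|\le 1\}$, the explicit computation $\int_{|u|\le 1}|u|_\eta^{-d}\dd u \sim \frac{|\mathbb{S}^{d-1}|}{1+\beta}|\ln\eta|$ (via $r=\eta^{1/(1+\beta)}s$) gives the leading term, while the correction $\int_{|u|\le 1}(\Phi_\eta-1)|u|_\eta^{-d}\dd u$ is shown to be $o(|\ln\eta|)$ by splitting further at a small $\delta$, using the local convergence $\Phi_\eta\to\Phi$ in $L^2_{\text{loc}}(\R^d\setminus 0)$ on $\{\delta\le|u|\le 1\}$, and the continuity of $\Phi$ at the origin with $\Phi(0)=1$ (whose rate is extracted from \eqref{eq:mmt-fract}) on $\{|u|\le\delta\}$. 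Combining with Step~1 produces the stated formula for $\kappa$ in the $\alpha=0$ case.

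The main obstacle is the $\alpha=0$ borderline. Here both $\theta(\varepsilon)$ and $\langle 1,\phi_{\varepsilon|\xi|}\rangle$ diverge logarithmically and the coefficients must be matched exactly. The crude $L^2$ estimate $\|\phi_\eta-1\|_{-\beta}\lesssim \mu(\eta)^{1/2}$ is not sharp enough near the inner boundary $|u|\sim \eta^{1/(1+\beta)}$ to beat the log divergence; one genuinely needs the pointwise/local-$L^2$ information on the rescaled mode $\Phi_\eta$ around the origin together with $\Phi(0)=1$, which is why Hypothesis~\ref{hyp:scalinginfinite}(ii) and its refinement (ii)-(b) are both used in this regime.
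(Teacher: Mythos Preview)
Your treatment of the numerator and of the case $\alpha>0$ is fine; the inner/outer splitting is a minor variant of the paper's direct interpolation argument and both work.

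The gap is in the $\alpha=0$ case, in your handling of the correction $\int_{|u|\le 1}(\Phi_\eta-1)|u|_\eta^{-d}\dd u$. Your splitting at $\delta$ does not isolate the logarithm: for any fixed $\delta>0$ one has $\int_{|u|\le\delta}|u|_\eta^{-d}\dd u\sim \frac{|\mathbb S^{d-1}|}{1+\beta}|\ln\eta|$ (substitute $u=\eta^{\frac{1}{1+\beta}}s$), so the full divergence sits in $\{|u|\le\delta\}$. There, neither~\eqref{eq:mmt-fract} (which only bounds $|\Phi_\eta|$, not $|\Phi_\eta-1|$) nor the continuity of the \emph{limit} $\Phi$ at $0$ controls $\Phi_\eta-1$; the convergence $\Phi_\eta\to\Phi$ is only given in $L^2_{\text{loc}}$ away from $0$. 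Hence your bound on $\{|u|\le\delta\}$ is merely $O(|\ln\eta|)$, not $o(|\ln\eta|)$.

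Your diagnosis in the last paragraph is in fact inverted: the ``crude'' $L^2$ estimate $\|\phi_\eta-1\|_{-\beta}\lesssim\mu(\eta)^{1/2}$ is \emph{exactly} sharp enough, and this is what the paper uses. Working in $v$-variables with $R=\eta^{-\frac{1}{1+\beta}}$, Cauchy--Schwarz gives
\[
\left|\int_{|v|\le R}(\phi_\eta-1)\cM\dd v\right|\le \|\phi_\eta-1\|_{-\beta}\,\big\|{\bf 1}_{|\cdot|\le R}\big\|_\beta
\lesssim \mu(\eta)^{1/2}\,R^{\beta/2}\lesssim \Theta(\eta)^{1/2}\,\eta^{-\frac{\beta}{2(1+\beta)}}=1,
\]
since $\Theta(\eta)=\eta^{\frac{\beta}{1+\beta}}$ when $\alpha=0$. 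So the correction is $O(1)$ outright, with no further splitting needed, and the remaining $\int_{|v|\le R}\cM\dd v\sim c_{0,\beta}\frac{|\mathbb S^{d-1}|}{1+\beta}|\ln\eta|$ gives the asymptotic. Replace your $\delta$-splitting by this single Cauchy--Schwarz step (equivalently, in $u$-variables, pair $(\Phi_\eta-1)|u|_\eta^{-(d+\beta)/2}$ against $|u|_\eta^{(\beta-d)/2}$) and the proof closes.
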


The diffusion coefficient thus emerges from ratios between
(rescaled) integrals as follows:
\begin{align}
  \label{eq:coeff-full}
  \kappa :=
  \begin{cases}
    \ds
    \frac{\ds \int_{\R^d} \left(v \cdot \sigma \right)
    F(v) \cM(v) \dd v}{\ds \| \cM \|_{L^1(\R^d)}}
    \hspace{2.8cm} \boxed{\text{ when } \alpha > 2+\beta}
    \\[9mm]
    \ds \frac{1}{1+\beta} 
    \frac{\ds \int_{\mathbb{S}^{d-1}} (\sigma \cdot \sigma')
    \Omega(\sigma')
    \dd \sigma'}{\ds \int_{\R^d} \wv^{-d-\alpha} \dd v}
    \hspace{2.2cm}
    \boxed{\text{ when } \alpha =
    2+\beta} \\[9mm]
    \ds \frac{\ds \int_{\R^d} (u \cdot \sigma)
    \Im\Phi(u)|u|^{- d -\alpha}
    \dd u}{\ds \int_{\R^d} \wv^{-d-\alpha} \dd v} \hspace{2.1cm}
    \boxed{\text{ when } \alpha \in
    (0,2+\beta)} \\[9mm]
    \ds
    \frac{1+\beta}{|\mathbb{S}^{d-1}|} 
    \frac{\ds
    \int_{\R^d} (u \cdot \sigma) \Im\Phi(u)|u|^{- d -\alpha}
    \dd u}{\ds \int_{\R^d} \wv^{-d-\alpha-\beta} \dd v}
    \hspace{1.05cm}
    \boxed{\text{ when } \alpha =0}
  \end{cases}
\end{align}
where we recall
\begin{align*}
  F = \lim_{\eta \to 0} \frac{\Im
  \phi_\eta}{\eta}, \qquad
  \Phi = \lim_{\eta\to 0} \Phi_\eta = \lim_{\eta \to 0} \phi_\eta
  \left( \eta^{-\frac{1}{1+\beta}} \cdot \right), \qquad
  \Omega(u) = \lim_{\lambda \to
  0, \ \lambda \not = 0} \frac{\Im \Phi \left( \lambda u
  \right)}{\lambda^{1+\beta}}, 
\end{align*}
and (when $\alpha > 2+\beta$) $F$ is also the unique solution to
$L F = - (v\cdot \sigma)$ with
$\int_{\R^d} F(v)\, \wv^{-d-\alpha-\beta} \dd v =0$. For
legibility again, we wrote, in the cases
$\alpha \in [0,2+\beta]$, the formula for $\kappa$ with $\cM$
given by~\eqref{eq:Mpoly}, and we refer to
Section~\ref{sec:appendix} for more general $\cM$'s. The proof
of Lemma~\ref{lem:diffcoeff} is done in
Section~\ref{sec:diff-coef}; it requires the estimating of
$\left\langle 1, \phi_{\eta} \right\rangle$, which is done in
Lemma~\ref{lem:zeromoment}.

Consider a solution $f$ in
$L^\infty_t([0,+\infty); L^2_{x,v}(\cM^{-1}))$ to~\eqref{eq:kinetic} with initial data
$f_{\text{\rm in}} ^{(\varepsilon)}$. Note that the initial data
$f_{\text{\rm in}} ^{(\varepsilon)}$, before the rescaling, is
allowed to depend on $\varepsilon$. Given $\varepsilon >0$ and
$\theta(\varepsilon)$ defined in~\eqref{eq:scaling-function}, we
rescale the solution and define a weighted rescaled spatial
density:
\begin{align*}
  & f_\varepsilon(t,x,v) := \frac{1}{\varepsilon^d} f\left(
    \frac{t}{\theta(\varepsilon)},
  \frac{x}{\varepsilon}, v \right) \in
  L^\infty_t\left([0,+\infty); L^2_{x,v}(\cM^{-1}) \right) \\
  & r_\varepsilon(t,x) := \int_{\R^d} f_\varepsilon(t,x,v)
    \wv^{-\beta} \dd v.
\end{align*}
The equation satisfied by $f_\varepsilon$ is
\begin{equation}
  \label{eq:gvar}
  \theta(\varepsilon) \partial_t f_\varepsilon + \varepsilon v
  \cdot \nabla_x f_\varepsilon = \cL f_\varepsilon.
\end{equation}
The \emph{rescaled} initial data is then
$f_\varepsilon(0,x,v) = \varepsilon^{-d} f_{\text{\rm in}}
^{(\varepsilon)}(\varepsilon^{-1} x, v)$, and in the following
theorem we assume the original initial data
$f^{(\varepsilon)} _{\text{\rm in}}$ to be \emph{well-prepared}
(see~\eqref{eq:initial-data}-\eqref{eq:initial-layer-micro}-\eqref{eq:initial-layer-fluid}):
this means that the fluid limit holds at time zero with
$f_\varepsilon(0,\cdot) \sim r_\varepsilon(0,\cdot) \cM$ and
$r_\varepsilon \sim r(0,\cdot)$ which provides the initial data
for the limit equation; this is standard in the literature. We
however note that when~\eqref{eq:initial-data} is satisfied
but~\eqref{eq:initial-layer-micro}-\eqref{eq:initial-layer-fluid}
are not imposed at $t=0$, the energy estimate and compactness
arguments on $r_\varepsilon$ would imply
that~\eqref{eq:initial-layer-micro}-\eqref{eq:initial-layer-fluid}
are satisfied at any later positive time $\tau >0$ (without
information on the rate though), and our method would prove the
fluid approximation for $t \ge \tau$. This would allow us for
instance to choose $f^{(\varepsilon)} _{\text{\rm in}} = r \cM$
independent of $\varepsilon$. We however kept the
assumptions~\eqref{eq:initial-layer-micro}-\eqref{eq:initial-layer-fluid} in order to precisely track the rate of convergence and the
initial data of the limit equation.

\begin{theorem}[Unified second fluid approximation, see
  Figure~\ref{fig:schema}]
  \label{theo:main}
  Assume Hypotheses~\ref{hyp:functional}--\ref{hyp:coercivity}--\ref{hyp:large-v}--\ref{hyp:scalinginfinite},
  $\alpha \ge 0$, and consider
  $f_\varepsilon \in L^\infty_t([0,+\infty);L^2_{x,v}(\cM^{-1}))$
  solving~\eqref{eq:kinetic} in the weak sense with initially
    \begin{equation}
    \label{eq:initial-data}
    \left\| \frac{f_\varepsilon(0,\cdot,\cdot)}{\cM} \right\|_0 = o\left(
      \begin{cases} \ds
        \theta(\varepsilon)^{-\frac12} ,
        & \text{ when } \alpha > \beta,
        \\[3mm]
        \varepsilon^{-\frac{\beta}{1+\beta}} \left| \ln \left( \varepsilon
          \right) \right|^{-\frac12} 
        & \text{ when } \alpha = \beta,
        \\[3mm] \ds
        \varepsilon^{ -\frac{\alpha}{1+\beta}}
        & \text{ when } \alpha \in (0,\beta),
        \\[3mm] \ds
        | \ln(\varepsilon)|^\frac32
        & \text{ when } \alpha = 0,
      \end{cases}
  \right)
  \end{equation}
  \\ \, \text{ and }
  \begin{equation}
    \label{eq:initial-layer-micro}
    \left\| \frac{f_\varepsilon}{\cM}(0,\cdot,\cdot)
      - r_\varepsilon(0,\cdot)
    \right\|_{-\beta} = o \left(
      \begin{cases} \ds
        1,
        & \text{ when } \alpha > \beta,
        \\[3mm]
        \left| \ln \left( \varepsilon
          \right) \right|^{-\frac12}
        & \text{ when } \alpha = \beta,
        \\[3mm] \ds
        \varepsilon^{ \frac{\beta - \alpha}{2(1+\beta)}} 
        & \text{ when } \alpha \in (0,\beta),
        \\[3mm] \ds
        \varepsilon^{\frac{\beta}{2(1+\beta)}}| \ln(\varepsilon)| 
        & \text{ when } \alpha = 0,
      \end{cases}
    \right),
  \end{equation}
  and (recalling the definition of $\zeta$ in~\eqref{eq:exponent})
  \begin{equation}
    \label{eq:initial-layer-fluid}
    r_\varepsilon(0,\cdot) \xrightarrow[\varepsilon \to
    0]{H^{-\zeta}(\R^d)} r(0,\cdot).    
  \end{equation}
  Then for any $T>0$ 
  \begin{equation}
    \label{eq:main-conv}
    \left\| \frac{f_\varepsilon}{\cM} - r
    \right\|_{L^2_t([0,T];H^{-\zeta}_x L^2_v(\cM_\beta))}
    \xrightarrow[\varepsilon \to 0]{} 0 
  \end{equation}
  when $\alpha > \beta$ and
  \begin{equation*}
    \left\| \left|\ln \frac{2|\nabla_x|}{1+|\nabla_x|}
      \right| \left( \frac{f_\varepsilon}{\cM} - r \right)
    \right\|_{L^2_t([0,T];H^{-\zeta}_x L^2_v(\cM_{\beta}))}
    \xrightarrow[\varepsilon \to 0]{} 0 
  \end{equation*}
  when $\alpha =\beta$ and
  \begin{equation*}
    \left\|
      |\nabla_x|^{\frac{\beta-|\alpha|}{2(1+\beta)}}
      \lfloor \nabla_x \rceil^{-\frac{\beta-|\alpha|}{2(1+\beta)}}
      \left( \frac{f_\varepsilon}{\cM} - r \right)
    \right\|_{L^2_t([0,T];H^{-\zeta}_x L^2_v(\cM_{\beta}))}
    \xrightarrow[\varepsilon \to 0]{} 0 
  \end{equation*}
  when $\alpha \in [0,\beta)$, where $r=r(t,x)$ solves
  \begin{equation*}
    \partial_t r = \kappa \, \Delta_x^{\frac{\zeta}{2}} r, \quad
    t>0, \quad \text{ with initial data $r(0,\cdot)$ defined
      in~\eqref{eq:initial-layer-fluid}}.
  \end{equation*}  
  The rates of convergence are estimated in terms of $T$, the
  constants, error terms and convergence rates in Hypotheses~  \ref{hyp:functional}--\ref{hyp:coercivity}--\ref{hyp:large-v}--\ref{hyp:scalinginfinite},
  and the initial convergence rates~\eqref{eq:initial-data}-\eqref{eq:initial-layer-micro}-\eqref{eq:initial-layer-fluid}. Apart
  from~\eqref{eq:initial-layer-fluid}, the errors we obtain are polynomial in $\varepsilon$ for $\alpha \in (-\beta,+\infty) \setminus \{0,2+\beta\}$ and logarithmic for $\alpha \in \{0,2+\beta\}$.
\end{theorem}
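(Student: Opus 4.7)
The plan is to combine a Fourier-based spectral projection onto the fluid eigenmode constructed in Lemma~\ref{lem:existencespectral} with dissipative energy estimates issued from Hypothesis~\ref{hyp:coercivity}. First I would take the Fourier transform in $x$, which reduces the rescaled equation~\eqref{eq:gvar} to the one-parameter family of velocity evolutions
\begin{equation*}
\theta(\varepsilon)\,\partial_t \hat h_\varepsilon(t,\xi,\cdot)
= \bigl(L - i\eta\,(v\cdot\sigma)\bigr)\hat h_\varepsilon(t,\xi,\cdot),
\qquad \eta:=\varepsilon|\xi|,\quad \sigma:=\xi/|\xi|,
\end{equation*}
where $\hat h_\varepsilon := \mathcal{F}_x(f_\varepsilon/\cM)$. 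By Plancherel the target estimate~\eqref{eq:main-conv} is a weighted $L^2_\xi$ bound with weight $(1+|\xi|^2)^{-\zeta}$. I would cut off at $K_\varepsilon := \eta_0/\varepsilon$, where $\eta_0$ is the threshold of Lemma~\ref{lem:existencespectral}. For $|\xi|>K_\varepsilon$, the damping by $(1+|\xi|^2)^{-\zeta}$ together with the pointwise-in-$\xi$ energy identity $\tfrac12 \theta(\varepsilon) \frac{d}{dt}\|\hat h_\varepsilon\|_0^2 \le -\lambda\,\|\hat h_\varepsilon - \cP\hat h_\varepsilon\|_{-\beta}^2$ (which also yields an $L^2_t$ control of the microscopic part on the rescaled time scale) gives a contribution that is absorbed by $\theta(\varepsilon)$ together with the initial decay~\eqref{eq:initial-data}.

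For $|\xi|\leq K_\varepsilon$, the fluid mode $\phi_\eta$ is available and I would decompose
\begin{equation*}
\hat h_\varepsilon(t,\xi,\cdot)
=\rho_\varepsilon(t,\xi)\,\phi_\eta + g_\varepsilon(t,\xi,\cdot),
\qquad
\rho_\varepsilon(t,\xi) := \int_{\R^d}\hat h_\varepsilon(t,\xi,v)\,\overline{\phi_\eta(v)}\,\cM_\beta(v)\,\dd v,
\end{equation*}
so that the normalisation $\int \phi_\eta\,\cM_\beta\,\dd v = 1$ forces $\rho_\varepsilon\to \hat r_\varepsilon$ as $\eta\to 0$ and $\rho_\varepsilon(t,0)=\hat r_\varepsilon(t,0)$. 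Testing the Fourier equation against $\overline{\phi_\eta}\cM$ and using the generalised eigenvalue identity $(-L^* - i\eta (v\cdot\sigma))\phi_\eta = \mu(\eta)\,\wv^{-\beta}\phi_\eta$ produces a quasi-closed ODE
\begin{equation*}
\theta(\varepsilon)\,\partial_t\rho_\varepsilon(t,\xi)
= -\mu(\eta)\,\rho_\varepsilon(t,\xi) + \mathcal{R}_\varepsilon(t,\xi),
\end{equation*}
where the remainder $\mathcal{R}_\varepsilon$ collects (i) the mismatch between the $\cM$-weighted test and the $\cM_\beta$-weighted projector, controlled by $\|\phi_\eta-1\|_{-\beta}\lesssim\mu(\eta)^{1/2}$ from~\eqref{eq:phi-beta} and by Hypothesis~\ref{hyp:large-v}, and (ii) the contribution of the microscopic part $g_\varepsilon$, controlled by the $L^2_t$ dissipation estimate combined again with~\eqref{eq:phi-beta}. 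Dividing by $\theta(\varepsilon)$ and invoking Lemma~\ref{lem:diffcoeff}, the ratio $\mu(\eta)/\theta(\varepsilon)$ converges pointwise in $\xi$ to $\kappa |\xi|^\zeta$, so Duhamel's formula together with~\eqref{eq:initial-layer-fluid} yields the pointwise convergence $\rho_\varepsilon(t,\xi)\to \hat r(t,\xi) := e^{-\kappa|\xi|^\zeta t}\hat r(0,\xi)$, which is the Fourier representation of $\partial_t r = \kappa\,\Delta_x^{\zeta/2}r$.

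To conclude I would split
\begin{equation*}
\bigl\|\hat h_\varepsilon(t,\xi,\cdot) - \hat r(t,\xi)\bigr\|_{L^2_v(\cM_\beta)}
\lesssim
|\rho_\varepsilon - \hat r| + |\rho_\varepsilon|\,\|\phi_\eta - 1\|_{L^2_v(\cM_\beta)} + \|g_\varepsilon\|_{L^2_v(\cM_\beta)},
\end{equation*}
integrate against the $H^{-\zeta}$ weight on $\{|\xi|\leq K_\varepsilon\}$, and handle each term separately: the first by the Duhamel argument above (with rate coming from Lemma~\ref{lem:ratespectral}, Hypothesis~\ref{hyp:scalinginfinite} and~\eqref{eq:initial-layer-fluid}); the second by~\eqref{eq:phi-beta} and the uniform bounds on $\phi_\eta$ from Hypothesis~\ref{hyp:scalinginfinite}, using the initial-layer assumption~\eqref{eq:initial-layer-micro}; the third by the dissipative $L^2_t$ bound. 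Collecting the outer-frequency contribution from Step~1 yields~\eqref{eq:main-conv}. The exponents and logarithmic weights stated in the theorem arise from the case-by-case bookkeeping of the thresholds where $\mu(\eta)/\theta(\varepsilon)$ changes regime in Lemma~\ref{lem:diffcoeff}, namely $\alpha\in\{0,\beta,2+\beta\}$.

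The hard part is the derivation and control of the quasi-closed ODE for $\rho_\varepsilon$, because the eigenvalue relation for $\phi_\eta$ is weighted by $\wv^{-\beta}$ rather than by $\cM$; this mismatch produces cross terms whose size must be simultaneously estimated by~\eqref{eq:phi-beta}, Hypothesis~\ref{hyp:large-v} (controlling $L(\chi_R)$ at large $v$) and Hypothesis~\ref{hyp:scalinginfinite} (controlling the scaling of $\phi_\eta$ at the fluid length scale $|v|\sim \eta^{-1/(1+\beta)}$). A secondary difficulty is the uniform-in-$\xi$ tracking of these errors across $|\xi|\leq K_\varepsilon$, since $\mu(\eta)/\theta(\varepsilon)$ degenerates at both $|\xi|\to 0$ (where the ODE carries no damping) and at $|\xi|\sim K_\varepsilon$ (where the spectral construction breaks down), requiring a careful interplay between the Duhamel decay and the $H^{-\zeta}$ weight to obtain uniform convergence in each of the four regimes of $\alpha$ and the stated polynomial/logarithmic rates.
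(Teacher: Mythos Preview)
Your overall architecture (Fourier in $x$, test against the fluid eigenmode, derive a scalar evolution, apply Duhamel, control the microscopic remainder via the dissipation estimate) matches the paper's proof. There is, however, a genuine gap in your scalar ODE. Testing against $\overline{\phi_\eta}\cM$ yields the \emph{exact} identity $\theta(\varepsilon)\,\frac{d}{dt}\langle\hat h_\varepsilon,\phi_\eta\rangle = -\mu(\eta)\,\rho_\varepsilon$, so the time derivative acts on the $\cM$-pairing $\langle\hat h_\varepsilon,\phi_\eta\rangle$, not on your $\cM_\beta$-pairing $\rho_\varepsilon$. Their difference is $\langle\hat h_\varepsilon,(1-\wv^{-\beta})\phi_\eta\rangle = \hat r_\varepsilon(\langle 1,\phi_\eta\rangle - 1) + \text{(micro)}$, and by Lemma~\ref{lem:zeromoment} one has $\langle 1,\phi_\eta\rangle\to\|\cM\|_{L^1(\R^d)}\neq 1$ for $\alpha>0$ (and $\langle 1,\phi_\eta\rangle\sim|\ln\eta|$ for $\alpha=0$). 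Thus your remainder $\mathcal{R}_\varepsilon$ is \emph{not} $O(\mu(\eta)^{1/2})$ but $O(1)$: it carries precisely the factor that converts $\mu_0$ into $\kappa$. Relatedly, you have misread Lemma~\ref{lem:diffcoeff}: it asserts $\mu(\eta)\big/(\theta(\varepsilon)\langle 1,\phi_\eta\rangle)\to\kappa|\xi|^\zeta$, whereas Lemma~\ref{lem:ratespectral} gives only $\mu(\eta)/\theta(\varepsilon)\to\mu_0|\xi|^\zeta$ with $\kappa=\mu_0\|\cM\|_{L^1(\R^d)}^{-1}$.

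The paper resolves this by discarding $\rho_\varepsilon$ and splitting both sides of the exact identity directly in terms of $\hat r_\varepsilon$: writing $\langle\hat h_\varepsilon,\phi_\eta\rangle = \langle 1,\phi_\eta\rangle\,\hat r_\varepsilon + \langle\hat h_\varepsilon-\hat r_\varepsilon,\phi_\eta\rangle$ and similarly on the right produces $\partial_t\hat r_\varepsilon + \kappa_\eta\hat r_\varepsilon = \partial_t E_1 + E_2$ with $\kappa_\eta := \mu(\eta)/(\theta(\varepsilon)\langle 1,\phi_\eta\rangle)\to\kappa|\xi|^\zeta$ and with $E_1,E_2$ involving only $\hat h_\varepsilon-\hat r_\varepsilon$, hence genuinely controlled by~\eqref{eq:bound-energy-fourier}. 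The source $\partial_t E_1$ is then removed by integration by parts inside the Duhamel integral; after this step the bound on $E_1$ requires $\|\phi_\eta\|_\beta$ (not merely $\|\phi_\eta-1\|_{-\beta}$), and it is the growth of $\|\phi_\eta\|_\beta$ when $\alpha\le\beta$ that generates the extra frequency weights and the case-by-case initial assumptions~\eqref{eq:initial-data}--\eqref{eq:initial-layer-micro}.
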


This theorem is the core contribution of the paper, and is used
to obtain results on concrete models in the corollaries
below. Together with
Lemmas~\ref{lem:existencespectral}--\ref{lem:ratespectral}--\ref{lem:diffcoeff},
it reveals the relevant macroscopic scales for a large class of
operators in any dimension and provides a unified theoretical
framework to answer questions of the last decades on the
topic. The diffusive limit is reduced to a spectral problem --the
construction of the fluid mode-- that we solve in a general
setting. The proof is constructive and the key constants
governing the macroscopic behaviours are derived. The fractional
Laplacian in the space variable is defined as
in~\eqref{eq:fract-Lap}, and $r(t,x)$ is the limit (in the
topology of the above theorem) of the weighted velocity average
\begin{equation*}
  r_\varepsilon(t,x) = 
  \int_{\R^d} f \left( \frac{t}{\theta(\varepsilon)},
    \frac{x}{\varepsilon}, v \right) \wv^{-\beta}  \dd v.
\end{equation*}
When $\alpha > 0$, the density
$\rho_\varepsilon (t,x) := \int_{\R^d} f \left(
  \frac{t}{\theta(\varepsilon)}, \frac{x}{\varepsilon}, v \right)
\dd v$ exists. When $\alpha>\beta$, it is straightforward that
$\rho_\varepsilon \to \| \cM \|_{L^1_v} r$ by Hölder's inequality
and the convergence~\eqref{eq:main-conv}. When $\alpha \in (0,\beta)$, the latter still holds under the slightly stronger assumption $|f_\varepsilon(0,x,\cdot)| \le C \cM$ for some $C>0$ on the
initial data. Indeed then $|f_\varepsilon(t,x,\cdot)| \le C \cM$
by the comparison principle, and
\begin{equation*}
  \forall \, t \ge 0, \ x \in \R^d, \quad
  \left\| \frac{f_\varepsilon(t,x,\cdot)}{\mathcal M(\cdot)} - r
  \right\|_{L^2_v(\mathcal M)} \lesssim 1
\end{equation*}
so we have by Hölder's inequality
\begin{align*}
  \left| \rho_\varepsilon(t,x) - \| \cM \|_{L^1_v} r(t,x) \right|
  & = \left| \int_{\R^d} \left( f_\varepsilon(t,x,v) - r \mathcal
    M \right) \dd v \right|\\ 
  & \le \left\| \mathcal M(\cdot) \wdot^{\frac{\alpha}{2}}
    \right\| \times \left\|
    \frac{f_\varepsilon(t,x,\cdot)}{\mathcal M(\cdot)} - r
    \right\|_{L^2_v(\mathcal M_{\frac{\alpha}{2}})} \\
  & \le \left\| \mathcal M(\cdot) \wdot^{\frac{\alpha}{2}}
    \right\| \left\|
    \frac{f_\varepsilon(t,x,\cdot)}{\mathcal
    M(\cdot)} - r \right\|_{L^2_v(\mathcal M)} ^{1-\frac{2
    \beta}{\alpha}} \left\| \frac{f_\varepsilon(t,x,\cdot)}{\mathcal
    M(\cdot)} - r \right\|_{L^2_v(\mathcal M_\beta)}
    ^{\frac{2 \beta}{\alpha}} \\
  & \lesssim \left\|
    \frac{f_\varepsilon(t,x,\cdot)}{\mathcal
    M(\cdot)} - r \right\|_{L^2_v(\mathcal M_\beta)}
    ^{\frac{2 \beta}{\alpha}}
\end{align*}
which implies $\rho_\varepsilon \to \| \cM \|_{L^1_v} r$ by
integrating against decaying test functions in $x$ and
using~\eqref{eq:main-conv}.

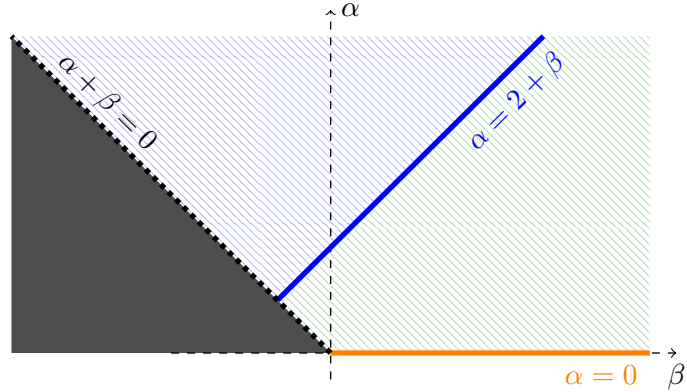
\begin{figure}[!ht]
  \begin{tikzpicture}[scale=0.7]
    \fill[pattern=north west lines,pattern color=black!50!green,
    opacity=0.5] 
    (0,0)--(6,0)--(6,6)--(4,6)--(-1,1) -- cycle;
    \fill[pattern=north west lines, pattern color=blue,
    opacity=0.5] 
    (-6,6)--(-1,1)--(4,6) -- cycle;
    \fill[fill=black, opacity=0.7] 
    (-6,6)--(0,0)--(6,0)--(-6,0) -- cycle;
    \draw[->,line width=0.5, dashed](-3,0)--(6.5,0)
    node[below]{$\beta$};
    \draw[->,line width=0.5, dashed](0,-0.5)--(0,6.5)
    node[right]{$\alpha$};
    \draw [line width=2,domain=-1:4, samples=150, blue] plot
    (\x, 2+\x)
    node[below]{\rotatebox{45}{$\hspace{-1cm}\alpha=2+\beta$}};
    \draw [line width=2,orange] (0,0)--(6,0) node[anchor=north
    east]{$\alpha = 0$}; \draw [line width=2,black,dotted]
    (0,0)--(-6,6) node[anchor = north west]{\quad
      \rotatebox{-45}{$\alpha+\beta=0$}};
  \end{tikzpicture}
  \caption{Summary of the results in the $(\alpha,\beta)$ plane. Admissible parameters are in the half-plane $\alpha+\beta > 0$. The blue hatched area leads to $\theta(\varepsilon) = \varepsilon^2$ and a standard diffusive limit with symbol $\kappa \vert \xi \vert^{2}$. The blue line is the set of parameters yielding the anomalous scaling $\theta(\varepsilon) = \varepsilon^2 \vert \ln(\varepsilon)\vert$ but still a standard diffusive limit with symbol $\kappa \vert \xi \vert^{2}$. The green hatched area results into the fractional scaling $\theta(\varepsilon) =   \varepsilon^{\frac{\alpha+\beta}{1+\beta}}$ and a fractional   diffusive limit with symbol $\kappa \vert \xi \vert^{\frac{\alpha+\beta}{1+\beta}}$. The orange bold line yields the fractional scaling $\theta(\varepsilon) = \varepsilon^{\frac{\beta}{1+\beta}} \vert \ln(\varepsilon) \vert^{-1}$ and a fractional diffusive limit with symbol $\kappa \vert \xi \vert^{\frac{\beta}{1+\beta}}$.}
  \label{fig:schema}
\end{figure}

We now apply the abstract Theorem~\ref{theo:main} to particular models:
\begin{corollary}[Scattering equation]
  \label{cor:scat}
  Assume that $\cL$ is the scattering
  operator~\eqref{eq:scattering} with $b \in \mathcal{C}^1$ and
  $\cM$ satisfying Hypothesis~\ref{hyp:functional} and that, for
  some constant $\nu_0 >0$ and $\alpha \ge 0$ and
  $\alpha + \beta >0$,
  \begin{equation}
    \label{eq:hyp-nu} 
    \begin{cases}
      \forall \, v \in \R^d, \quad
      \wv^{-\beta} \lesssim \nu(v) \lesssim
      \wv^{-\beta} \\[3mm]
      \forall \, v \in \R^d \setminus \{0\}, \quad
      \lambda^\beta \nu (\lambda v) \sim_{\lambda \to \infty} \nu_0
      |v|^{-\beta} \\[3mm]
      \forall \, v \in \R^d, \quad
      \Vert b(v,\cdot) \Vert_\beta + \Vert b(\cdot,v) \Vert_\beta
      \lesssim \wv^{-\beta}.
    \end{cases}
  \end{equation}
  This includes $b(v,v') = \wv^{-\beta}\wvp^{-\beta}$, and
  $b(v,v') = \lfloor v -v' \rceil^{-\beta}$ when $\beta <0$ and
  $\alpha + \beta >0$ or when $\beta \ge 0$ and
  $\alpha > 3 \beta$. Then Theorem~\ref{theo:main} applies with
  $\alpha,\beta$ given in Hypotheses~\ref{hyp:functional}
  and~\eqref{eq:hyp-nu}. This proves the diffusive limit for
  solutions to~\eqref{eq:gvar} satisfying~~\eqref{eq:initial-data}-\eqref{eq:initial-layer-micro}-\eqref{eq:initial-layer-fluid} with quantitative rate, diffusion
  exponent $\zeta = \frac{\alpha+\beta}{1+\beta}$, scaling
  function~\eqref{eq:scaling-function} and diffusion
  coefficient~\eqref{eq:coeff-full}. Apart from~\eqref{eq:initial-layer-fluid},
  the errors we obtain are polynomial in $\varepsilon$ for
  $\alpha \in (0,+\infty) \setminus \{2+\beta\}$ and logarithmic
  for $\alpha \in \{0,2+\beta\}$. Moreover the diffusion
  coefficient can be computed explicitly with
  $F(u) = \nu(v)^{-1}(v\cdot \sigma)$ when $\alpha > 2+\beta$,
  $\Omega(u) = \nu_0^{-1} \vert u \vert^{\beta} (u \cdot \sigma)$
  when $\alpha = 2+\beta$ and
  \begin{equation*}
    \Phi(u) = \frac{\nu_0}{\nu_0 - i \vert u \vert^{\beta} (u
      \cdot \sigma)}
  \end{equation*}
  when $\alpha \in (-\beta,2+\beta)$, resulting in
  \begin{equation*}
    \label{eq:coeff-scattering}
    \kappa :=
    \begin{cases}
      \ds \frac{\ds \int_{\R^d} \left(v \cdot \sigma \right)^2 
      \nu(v)^{-1} \wv^{-d-\alpha} \dd v}{\ds \int_{\R^d}
      \wv^{-d-\alpha} \dd v}
      \hspace{2cm} \boxed{\text{ when } \alpha \in
      (2+\beta,+\infty)} \\[10mm]
      \ds \frac{1}{\nu_0(1+\beta)} 
      \frac{\int_{\mathbb{S}^{d-1}} (\sigma \cdot \sigma')^2 
      \dd \sigma'}{\ds \int_{\R^d} \wv^{-d-\alpha} \dd v}
      \hspace{2.45cm}
      \boxed{\text{ when } \alpha =
      2+\beta} \\[10mm]
      \frac{\ds \int_{\R^d} \frac{\nu_0 \vert u \vert^{\beta}
      (u \cdot \sigma)^2}{\nu_0^2
      + \vert u \vert^{2\beta} (u \cdot \sigma)^2}
      \frac{\dd u}{|u|^{d+\alpha}}}{\ds \int_{\R^d} \wv^{-d-\alpha}
      \dd v} \hspace{2.1cm}
      \boxed{\text{ when } \alpha \in
      (0,2+\beta)} \\[10mm]
      \ds
      \frac{(1+\beta)}{|\mathbb{S}^{d-1}|} 
      \frac{\ds \int_{\R^d} \frac{\nu_0 \vert u \vert^{\beta}
      (u \cdot \sigma)^2}{\nu_0^2
      + \vert u \vert^{2\beta} (u \cdot \sigma)^2}
      \frac{\dd u}{|u|^{d}}}{\ds \int_{\R^d}
      \wv^{-d-\beta} \dd v} \hspace{1.2cm}
      \boxed{\text{ when } \alpha =0}
    \end{cases}
  \end{equation*}
  as well as $\kappa := \| \cM \|^{-1} _{L^1(\R^d)} \int_{\R^d} (v\cdot
  \sigma)^2 \nu^{-1} \cM(v) \dd v$ for the case ``$\alpha = +\infty$''.
\end{corollary}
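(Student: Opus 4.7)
The plan is to verify the four Hypotheses~\ref{hyp:functional}--\ref{hyp:coercivity}--\ref{hyp:large-v}--\ref{hyp:scalinginfinite} for the scattering operator, which in the unknown $h = f/\cM$ reads $Lh(v) = \int b(v,v')\cM(v')h(v')\dd v' - \nu(v) h(v)$ with $L^2_v(\cM)$-adjoint $L^* g(v) = \int b(v',v)\cM(v') g(v')\dd v' - \nu(v) g(v)$, and then to identify closed-form expressions for the ingredients $F$, $\Phi$, $\Omega$ of Lemma~\ref{lem:diffcoeff} so that Theorem~\ref{theo:main} gives both the convergence with rates and the explicit formula for $\kappa$. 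Hypothesis~\ref{hyp:functional} is part of the data. For Hypothesis~\ref{hyp:coercivity}, the identities $L(1) = L^*(1) = 0$ and rotational invariance are built into the structure of $b$ and $\nu$, while the spectral gap on $\tilde L$ follows by writing $-\Re\langle \tilde L g, g\rangle$ as the multiplication term $\int \wv^\beta \nu(v) |g|^2 \cM \sim \|g\|^2$ (by the first line of~\eqref{eq:hyp-nu}) minus a bilinear integral controlled via Cauchy--Schwarz and the Schur-type third line of~\eqref{eq:hyp-nu}, yielding coercivity on the orthogonal of the one-dimensional kernel. Hypothesis~\ref{hyp:large-v} would be checked through the identity $L\chi_R(v) = \int b(v,v')\cM(v')[\chi_R(v') - \chi_R(v)]\dd v'$: splitting the integration domain according to $|v|, |v'|\lessgtr R$ and using the decay of $\cM$ together with the bounds on $b$ produces the required $R^{-(\alpha+\beta)/2}$ rate after integration.

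The main step is Hypothesis~\ref{hyp:scalinginfinite}, which I would treat by observing that the fluid mode equation is equivalent to the pointwise algebraic relation
\begin{equation*}
  \phi_\eta(v) \, \Big[\nu(v) - i\eta(v\cdot\sigma) - \mu(\eta)\wv^{-\beta}\Big]
  = \Psi_\eta(v), \qquad \Psi_\eta(v) := \int b(v',v)\cM(v')\phi_\eta(v')\dd v',
\end{equation*}
so that $\phi_\eta$ is the ratio of the averaging $\Psi_\eta$ by a purely local denominator. The numerator is a smoothed average of $\phi_\eta$ with integrable kernel $b(\cdot, v)\cM(\cdot)$; by the normalisation $\cP\phi_\eta = 1$ and the convergence $\phi_\eta \to 1$ in $L^2_v(\wdot^{-\beta}\cM)$ from Lemma~\ref{lem:existencespectral} and~\eqref{eq:phi-beta}, it converges in suitable weighted norms to $\nu(v)$. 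In the case $\alpha > 2+\beta$, taking the imaginary part, dividing by $\eta$ and letting $\eta\to 0$ identifies $F(v) = (v\cdot\sigma)/\nu(v)$ (the contribution of $\Im\Psi_\eta/\eta$ cancelling at leading order by rotational symmetry of $b(\cdot, v)\cM$), and the moment bounds of Hypothesis~\ref{hyp:scalinginfinite}(i) follow from this explicit form together with~\eqref{eq:hyp-nu}. In the fractional regime $\alpha \in (-\beta, 2+\beta)$, the rescaling $u = \eta^{1/(1+\beta)} v$ transforms the denominator via the second line of~\eqref{eq:hyp-nu} into $\eta^{\beta/(1+\beta)}[\nu_0|u|^{-\beta} - i(u\cdot\sigma)]$ to leading order, while $\Psi_\eta$ converges pointwise to $\nu(v)$, yielding the explicit limit
\begin{equation*}
  \Phi(u) = \frac{\nu_0}{\nu_0 - i|u|^\beta(u\cdot\sigma)}
\end{equation*}
from which the pointwise controls~\eqref{eq:mmt-fract} and the integral bound~\eqref{eq:mmt-fract-bis} are elementary. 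In the borderline case $\alpha = 2+\beta$, the small-$u$ expansion $\Im\Phi(u) \sim \nu_0^{-1}|u|^\beta(u\cdot\sigma)$ gives $\Omega(\sigma') = \nu_0^{-1}(\sigma\cdot\sigma')$, and the log-corrective control of Hypothesis~\ref{hyp:scalinginfinite}(ii)-(a) follows from a uniform expansion of $\Phi_\eta - \Phi$ on the annulus $\eta^{1/(1+\beta)} \le |u| \le 1$.

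With all hypotheses verified, Theorem~\ref{theo:main} produces the diffusive limit at the stated rates, polynomial in $\varepsilon$ except at the logarithmic borderlines $\alpha \in \{0, 2+\beta\}$, and Lemma~\ref{lem:diffcoeff} gives $\kappa$ by substituting the explicit $F$, $\Phi$ and $\Omega$ into~\eqref{eq:coeff-full}, yielding the four boxed expressions in the corollary. The main technical obstacle is the quantitative control of $\Psi_\eta$ required by Hypothesis~\ref{hyp:scalinginfinite}: for the product-form kernel $b(v,v') = \wv^{-\beta}\wvp^{-\beta}$ the averaging is trivial since $\Psi_\eta(v) = \wv^{-\beta}\cP\phi_\eta$ is a scalar multiple of $\nu$, but in the general case one must show that $\Psi_\eta(v)$ and its rescaled imaginary part remain controlled pointwise in $v$ uniformly as $\eta \to 0$, in order to pass to the limit $\Phi_\eta \to \Phi$ in $L^2_{\text{loc}}(\R^d\setminus\{0\})$; this is where the regularity $b \in \mathcal{C}^1$ and the Schur-type bounds of~\eqref{eq:hyp-nu} play their decisive role.
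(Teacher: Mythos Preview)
Your proposal is correct and follows essentially the same approach as the paper: both reduce the verification of Hypothesis~\ref{hyp:scalinginfinite} to the pointwise algebraic relation $\phi_\eta(v)\,[\nu(v) - i\eta(v\cdot\sigma) - \mu(\eta)\wv^{-\beta}] = \Psi_\eta(v)$ (your $\Psi_\eta$ is the paper's $L^{*,+}\phi_\eta$), control $\Psi_\eta$ via the Schur bound $|\Psi_\eta(v)| \le \|b(\cdot,v)\|_\beta\|\phi_\eta\|_{-\beta} \lesssim \wv^{-\beta}$, and deduce both the uniform $L^\infty$ bound on $\phi_\eta$ and the convergence $\Phi_\eta \to \Phi$ by comparison with the explicit function obtained by replacing $\Psi_\eta$ with $\nu$. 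The paper makes this last step slightly more explicit by naming the intermediate comparison function $\Phi_{\eta,0}(u) := \eta^{\frac{\beta}{1+\beta}}\nu(\eta^{-\frac{1}{1+\beta}}u)/[\eta^{\frac{\beta}{1+\beta}}\nu(\eta^{-\frac{1}{1+\beta}}u) - i(u\cdot\sigma) - \mu(\eta)|u|_\eta^{-\beta}]$ and proving $\|\Phi_\eta/\Phi_{\eta,0} - 1\|_{L^\infty} \lesssim \sqrt{\mu(\eta)}$, but this is exactly the quantitative control of $\Psi_\eta - \nu$ that you describe.
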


This recovers and unifies the results in~\cite{MR2588245,M3,MR2861578,MR1803225,Mel} (except for the
case of space-dependent collision kernels in~\cite{MR1803225})
and extends them to new cases such as $\alpha =0$
(infinite mass). The convergence rate is also new. Our approach
shares common points with, but differs from the probabilistic
method in~\cite{MR2588245}, the Hilbert expansions
in~\cite{MR2861578,MR1803225}, the moment method in~\cite{Mel}
and the Fourier-Laplace method in~\cite{M3}.

\begin{corollary}[Kinetic Fokker-Planck equation]
  \label{cor:FP}
  Assume that $\cL$ is the Fokker-Planck operator~\eqref{eq:FP}
  with $\cM$ satisfying Hypothesis~\ref{hyp:functional} with
  $\alpha \geq 0$. Then Theorem~\ref{theo:main} applies with
  $\alpha$ given in Hypothesis~\ref{hyp:functional} and
  $\beta = 2$. This proves the diffusive limit for solutions
  to~\eqref{eq:gvar} satisying~\eqref{eq:initial-data}-\eqref{eq:initial-layer-micro}-\eqref{eq:initial-layer-fluid} with quantitative rate, diffusion exponent
  $\zeta = \min \left( 2, \frac{\alpha+2}{3} \right)$, scaling
  function~\eqref{eq:scaling-function} and diffusion
  coefficient~\eqref{eq:coeff-full}. Apart from~\eqref{eq:initial-layer-fluid},
  the errors we obtain are polynomial in $\varepsilon$ for
  $\alpha \in (0,+\infty) \setminus \{4\}$ and logarithmic for
  $\alpha=4$. The diffusion coefficient can be made precise when
  $\alpha \in (0,4]$ using that $\Phi$ solves the
  Schrödinger-type equation
  \begin{equation*}
    -|u|^2 \Delta_u \Phi + (d+\alpha)u \cdot  \nabla_u \Phi
    - i (u \cdot \sigma) |u|^2 \Phi =0 \quad \text{ with the
      normalisation } \Phi(0)=1.
  \end{equation*}
  In particular when $\alpha=4$, the function $\Omega$ solves
  \begin{equation*}
    -|u|^2 \Delta_u \Omega + (d+\alpha)u \cdot  \nabla_u \Omega =
    (u \cdot \sigma) |u|^2 \quad \text{ with } \quad \Omega(0)=0 \quad
    \Longrightarrow \quad
    \Omega(u) := \frac{|u|^2 (u \cdot \sigma)}{d+8}.
  \end{equation*}
\end{corollary}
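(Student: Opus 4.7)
The plan is to apply Theorem~\ref{theo:main} with $\beta=2$, which amounts to verifying Hypotheses~\ref{hyp:coercivity}--\ref{hyp:scalinginfinite} for the Fokker-Planck operator (Hypothesis~\ref{hyp:functional} being assumed), and then deriving the Schr\"odinger-type equation for $\Phi$ in order to identify the diffusion coefficient explicitly. First, $L h = \cM^{-1}\nabla_v\cdot(\cM\nabla_v h)$ is self-adjoint on $L^2_v(\cM)$ by integration by parts, with $-\langle Lh, h\rangle = \int_{\R^d}|\nabla_v h|^2 \cM \dd v$ and $L(1)=0$, so Hypothesis~\ref{hyp:coercivity} reduces to the weighted Hardy-Poincar\'e inequality
\[
\int_{\R^d}|\nabla_v h|^2 \cM \dd v \gtrsim \int_{\R^d}|h-\cP h|^2 \wv^{-2} \cM \dd v,
\]
which is classical for the polynomial equilibrium~\eqref{eq:Mpoly} when $\alpha+\beta=\alpha+2>0$ (e.g.\ proved by symmetrisation and a 1D Hardy inequality, or via Schr\"odinger conjugation $h\mapsto h\cM^{1/2}$). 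Hypothesis~\ref{hyp:large-v} is a pointwise computation from $L\chi_R = \Delta_v\chi_R - (d+\alpha)\wv^{-2} v\cdot\nabla_v \chi_R$, which is supported in $\{R\leq|v|\leq 2R\}$ and of size $O(R^{-2})$ there, giving the required decay after weighted integration against $\wv^{2\beta}\cM$.

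For Hypothesis~\ref{hyp:scalinginfinite}, I would start from the fluid mode equation produced by Lemma~\ref{lem:existencespectral},
\[
-L\phi_\eta - i\eta(v\cdot\sigma)\phi_\eta = \mu(\eta)\wv^{-2}\phi_\eta,
\]
multiply by $\wv^2$ and use $Lh = \Delta_v h - (d+\alpha)\wv^{-2} v\cdot\nabla_v h$ to obtain
\[
-\wv^2 \Delta_v \phi_\eta + (d+\alpha) v\cdot\nabla_v \phi_\eta - i\eta \wv^2 (v\cdot\sigma)\phi_\eta = \mu(\eta) \phi_\eta.
\]
Since $1/(1+\beta)=1/3$, the rescaled mode $\Phi_\eta(u):=\phi_\eta(\eta^{-1/3}u)$ then satisfies
\[
-(\eta^{2/3}+|u|^2)\Delta_u \Phi_\eta + (d+\alpha) u\cdot\nabla_u \Phi_\eta - i(\eta^{2/3}+|u|^2)(u\cdot\sigma)\Phi_\eta = \mu(\eta) \Phi_\eta,
\]
and passing formally $\eta\to 0$ (using $\mu(\eta)\to 0$) yields the claimed Schr\"odinger-type equation with normalisation $\Phi(0)=1$ coming from~\eqref{eq:phi-beta}. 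The pointwise controls~\eqref{eq:mmt-fract} and the integral bound~\eqref{eq:mmt-fract-bis} would then follow by energy estimates on the rescaled equation (testing against $\bar\Phi_\eta$ with weight $|u|_\eta^{-d-\alpha+\beta}$) combined with the coercivity; the case $\alpha>4$ reduces instead to polynomial moment estimates on $\phi_\eta$ in its original variable, obtained by testing the original equation against $\wv^{2\ell}\bar\phi_\eta$ and exploiting Hypothesis~\ref{hyp:coercivity}.

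Finally, taking the imaginary part of the limit equation and using $\Re\Phi(0)=1$ together with $\Im\Phi = O(|u|^{1+\beta})$ near the origin produces, at leading order in the limit $\lambda\to 0$ along $u=\lambda\sigma'$, the equation $-|u|^2\Delta_u\Omega + (d+\alpha)u\cdot\nabla_u\Omega = (u\cdot\sigma)|u|^2$; by rotation invariance and the expected $3$-homogeneity of $\Omega$, the ansatz $\Omega(u)=c|u|^2(u\cdot\sigma)$ is natural and solves this with
\[
[3(d+\alpha) - 2(d+2)]\, c = 1, \qquad \text{i.e.\ } c = \frac{1}{d+3\alpha-4},
\]
using $\Delta_u[|u|^2(u\cdot\sigma)] = 2(d+2)(u\cdot\sigma)$ (from $\Delta(fg)=f\Delta g + 2\nabla f\cdot\nabla g + g\Delta f$) and $u\cdot\nabla_u[|u|^2(u\cdot\sigma)] = 3|u|^2(u\cdot\sigma)$ (Euler's identity), yielding $c=1/(d+8)$ when $\alpha=4$. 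The main obstacle is Hypothesis~\ref{hyp:scalinginfinite}: the formal rescaling and limit equation are immediate, but the rigorous uniform pointwise and integral control of $\Phi_\eta$ near $u=0$ and at infinity requires a careful compactness argument combining elliptic regularity for the degenerate non-self-adjoint rescaled equation with the weighted coercivity estimate, and this is the delicate part of the verification.
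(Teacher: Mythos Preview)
Your verification of Hypotheses~\ref{hyp:coercivity} and~\ref{hyp:large-v}, the derivation of the rescaled equation for $\Phi_\eta$, and the computation of $\Omega$ when $\alpha=4$ are all correct and match the paper. You are also right that Hypothesis~\ref{hyp:scalinginfinite} is the delicate part.

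However, your proposed route for the pointwise controls~\eqref{eq:mmt-fract} --- ``energy estimates on the rescaled equation combined with the coercivity'' --- is not what the paper does, and it is unclear that it would succeed directly, since the equation for $\Phi_\eta$ is degenerate at $u=0$ and non-self-adjoint. The paper instead uses a \emph{maximum principle} argument: from the Kato-type inequality $\Re(\tfrac{\overline{\Phi_\eta}}{|\Phi_\eta|}\Delta_u\Phi_\eta)\leq \Delta_u|\Phi_\eta|$, one derives a subsolution inequality for $|\Phi_\eta|$ and compares with the explicit barrier $F(u)=|u|_\eta^{C\mu(\eta)}$, checking by direct computation that $F$ is a supersolution for $|u|\geq A\eta^{1/3}$; the region $|u|\leq A\eta^{1/3}$ is handled by elliptic regularity on $\phi_\eta$ in the original variable. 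The bound on $|\Im\Phi_\eta|$ then follows by the same comparison with a barrier $|u|_\eta^{\mathtt e}$.

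For the integral moment bound~\eqref{eq:mmt-fract-bis}, energy estimates are indeed the starting point, but a single test against $\bar\Phi_\eta |u|_\eta^{-d-\alpha+\beta}$ does not close: the paper needs an \emph{iterated gain-of-decay} lemma. One first tests with a real weight to bound $\nabla\Phi_\eta$, then with the weight $(u\cdot\sigma)\chi^2|u|_\eta^{q-d-\alpha-1}$ and takes the imaginary part to gain $|u|$-decay \emph{outside} a cone around $\{u\perp\sigma\}$; inside the cone one uses the Caffarelli--Kohn--Nirenberg (or Onofri in $d=2$) inequality together with the smallness of the cone's volume. Each application gains a fixed power $\mathtt g>0$, and iteration yields arbitrary polynomial decay.
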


This recovers and unifies the fractional diffusive limit results
in~\cite{MR3984748,MR3342190,fournier2018dimensional,fournier2018anomalous,lebeau2017diffusion}. Novel
contributions include formulas for the diffusion coefficient in
dimension higher than $1$, and the quantitative argument
providing a convergence rate.
  
\begin{corollary}[Kinetic Lévy-Fokker-Planck equation]
  \label{cor:LFP}
  Assume that $\cL$ is the Lévy-Fokker-Planck
  operator~\eqref{eq:LFP} with parameter $s \in (\frac12,1)$ and
  with $\cM$ satisfying Hypothesis~\ref{hyp:functional} with
  $\alpha > s$.  Then Theorem~\ref{theo:main} applies with
  $\beta := 2s-\alpha$. This proves the diffusive limit for
  solutions to~\eqref{eq:gvar} satisfying~\eqref{eq:initial-data}-\eqref{eq:initial-layer-micro}-\eqref{eq:initial-layer-fluid} with quantitative rate and
  diffusion exponent
  \begin{equation*}
    \zeta = 
    \begin{cases}
      2 & \text{ when } \alpha \ge 1+s\\[2mm] \ds
      \frac{2s}{1+2s- \alpha} & \text{ when } \alpha \in
      (s,1+s),    
    \end{cases}
 \end{equation*}
 and scaling function~\eqref{eq:scaling-function} and diffusion
 coefficient~\eqref{eq:coeff-full}. Apart from~\eqref{eq:initial-layer-fluid}, the errors we obtain
 are polynomial in $\varepsilon$ when
 $\alpha \in (s,1+s) \cup (1+s,+\infty)$ and logarithmic for
 $\alpha=1+s$. Moreover
 \begin{equation*}
   \Phi(u) := \exp\left(i\frac{2s c_{\alpha,0}}{c_{\alpha,\beta}}
     \frac{|u|^\beta (u \cdot \sigma)}{1+\beta}\right)
 \end{equation*}
 when $\alpha \in (s,1+s]$ and
 \begin{equation*}
   \Omega(u) := \frac{2s c_{\alpha,0}} {c_{\alpha,\beta}}  \frac{|u|^\beta (u \cdot \sigma)}{1+\beta}
 \end{equation*}
 when $\alpha=1+s$, which yields for the diffusion coefficient
 \begin{equation*}
   \kappa :=
   \begin{cases}
     \ds \frac{2s c_{\alpha,0}^2} {c_{\alpha,\beta}(1+\beta)^2} \ds
     \int_{\mathbb{S}^{d-1}} (\sigma' \cdot \sigma)^2 \dd \sigma'
     \hspace{2.2cm}
     &\boxed{\text{ when } \alpha =
       1+s} \\[9mm]     
     \ds \frac{c_{\alpha,0}}{1+\beta}
     \left(\frac{2s c_{\alpha,0}}{c_{\alpha,\beta} (1+\beta)}
     \right)^{\frac{\alpha-1}{1+\beta}}
     \int_{\R^d} (w \cdot \sigma) \sin(w \cdot \sigma) \frac{dw}{
     |w|^{d + \frac{\alpha + \beta}{1+\beta}}}
     \hspace{0.5cm}
     &\boxed{\text{ when } \alpha \in
       (s,1+s)}
   \end{cases}
 \end{equation*}
\end{corollary}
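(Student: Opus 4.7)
My plan is to check Hypotheses~\ref{hyp:coercivity}--\ref{hyp:large-v}--\ref{hyp:scalinginfinite} for the Lévy-Fokker-Planck operator with $\beta:=2s-\alpha$, invoke Theorem~\ref{theo:main}, and then identify the explicit profile $\Phi$ and compute $\kappa$ by inserting it into~\eqref{eq:coeff-full}. Compatibility with Hypothesis~\ref{hyp:functional} is immediate ($\alpha+\beta=2s>0$), and integration by parts against $\cM$ yields $L^\ast g=\Delta_v^sg-U\cdot\nabla_vg$ in $L^2_v(\cM)$.

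For Hypothesis~\ref{hyp:coercivity} I would compute the symmetric part of $L$. Using the integral representation of $\Delta_v^s$ together with the stationary identity for $(\cM,U)$, one obtains
\begin{equation*}
-\Re\langle Lh,h\rangle_{L^2_v(\cM)} = \frac{C_{d,s}}{4}\iint\frac{(\cM(v)+\cM(v'))(h(v)-h(v'))^2}{|v-v'|^{d+2s}}\,dv\,dv',
\end{equation*}
where $C_{d,s}$ denotes the Riesz normalization. The required spectral gap then reduces to a weighted fractional Poincaré inequality for $\cM_\beta\,dv$, available in our range $\alpha>s$ by combining a standard fractional Poincaré inequality on bounded sets with a weighted fractional Hardy estimate on the tails. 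For Hypothesis~\ref{hyp:large-v}, the same stationary identity gives $L(\chi_R)=\cM^{-1}[\Delta_v^s,\chi_R]\cM + U\cdot\nabla_v\chi_R$, and both pieces are supported in $\{|v|\gtrsim R\}$ and are estimated directly to $O(R^{-s})=O(R^{-(\alpha+\beta)/2})$ in the $\|\cdot\|_\beta$-norm using the decay of $\cM$ and the far-field asymptotics of $U$.

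The main task is Hypothesis~\ref{hyp:scalinginfinite}. For $\alpha>1+s$ (standard diffusion) the moment bounds on $\phi_\eta$ follow from Hypothesis~\ref{hyp:coercivity} via a Fredholm argument solving $LF=-(v\cdot\sigma)$. In the fractional range $\alpha\in(s,1+s]$, I would rescale the eigenvalue equation via $\Phi_\eta(u):=\phi_\eta(\eta^{-1/(1+\beta)}u)$ and divide by $\eta^{\beta/(1+\beta)}$; the fractional Laplacian term then acquires an extra factor $\eta^{\alpha/(1+\beta)}\to 0$, and the rescaled drift is controlled via the large-$v$ asymptotics $U(v)\sim U_\infty\,v|v|^{-\beta}$. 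The constant $U_\infty$ is computed from the far-field expansion $(-\Delta)^s\cM(v)\sim -C_{d,s}\|\cM\|_{L^1}|v|^{-d-2s}$ integrated against the radial ODE $\partial_r(r^{d-1}u(r)\cM(r))=-r^{d-1}\Delta_v^s\cM(r)$, producing $U_\infty=c_{\alpha,\beta}/(2sc_{\alpha,0})$. Passing to the limit yields the first-order ODE
\begin{equation*}
U_\infty\, u|u|^{-\beta}\cdot\nabla_u\Phi = i(u\cdot\sigma)\Phi,\qquad \Phi(0)=1,
\end{equation*}
whose explicit solution is $\Phi(u)=\exp\bigl(i\,(2sc_{\alpha,0}/c_{\alpha,\beta})\,|u|^\beta(u\cdot\sigma)/(1+\beta)\bigr)$, and the pointwise bounds in~\eqref{eq:mmt-fract} follow from $|\Phi|=1$ and a Taylor expansion of $\sin$. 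The hardest step will be to make this limit rigorous: establishing uniform a priori bounds on $\Phi_\eta$ from Hypothesis~\ref{hyp:coercivity}, local compactness in $L^2_{\text{loc}}(\R^d\setminus\{0\})$, and controlling oscillations near $u=0$ and at infinity. Once~\eqref{eq:scalingmu} is at hand, the diffusion coefficient follows by inserting $\Phi$ (or $\Omega$ at the threshold $\alpha=1+s$) into~\eqref{eq:coeff-full}; for $\alpha\in(s,1+s)$ the substitution $w=[2sc_{\alpha,0}/(c_{\alpha,\beta}(1+\beta))]^{1/(1+\beta)}|u|^\beta u/|u|$ converts $|u|^\beta(u\cdot\sigma)/(1+\beta)$ into $w\cdot\sigma$ and produces the announced integral involving $\sin(w\cdot\sigma)$.
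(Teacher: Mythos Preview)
Your outline matches the paper's proof in structure: verify the four hypotheses with $\beta=2s-\alpha$, identify the rescaled limit $\Phi$ from the first-order ODE that survives once the fractional Laplacian is scaled away, and plug into~\eqref{eq:coeff-full}. The Dirichlet-form computation for Hypothesis~\ref{hyp:coercivity} and the computation of $U_\infty$ are exactly those of the paper.

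Two points need correction or sharpening. First, a minor slip: in Hypothesis~\ref{hyp:large-v} the commutator $\cM^{-1}[\Delta_v^s,\chi_R]\cM$ is \emph{not} supported in $\{|v|\gtrsim R\}$ because $\Delta_v^s$ is nonlocal; for $|v|\le R$ one still picks up the contribution $\int_{|v'|\ge R}\frac{(\chi_R(v')-1)\cM(v')}{|v-v'|^{d+2s}}\,dv'$, which the paper shows separately is $O(R^{-(d+2s+\alpha)})$ pointwise, hence harmless in the $\|\cdot\|_\beta$-norm.

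Second, and more substantively, the step you correctly flag as ``hardest'' is not just compactness but the \emph{uniform pointwise} bounds~\eqref{eq:mmt-fract} on $\Phi_\eta$ (not on the limit $\Phi$). Saying that $|\Phi|=1$ and Taylor-expanding $\sin$ handles only the limit object, not the family. The paper's device here is a barrier argument: apply the Kato inequality $\Delta_u^s|\Phi_\eta|\ge\Re\bigl(\overline{\Phi_\eta}|\Phi_\eta|^{-1}\Delta_u^s\Phi_\eta\bigr)$ to the rescaled eigenvalue equation, then show that $F(u)=|u|_\eta^{C\mu(\eta)}$ is a super-solution of the resulting scalar inequality for $|u|\ge A\eta^{1/(1+\beta)}$ (with $A,C$ large), and conclude $|\Phi_\eta|\lesssim F$ by the maximum principle, using local fractional elliptic regularity to control $\phi_\eta$ on $|v|\le A$. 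The bound on $\Im\Phi_\eta$ is obtained the same way with barrier $|u|_\eta^{\min(1,\alpha)+\beta-C\mu(\eta)}$. Without this comparison-principle argument (or an equivalent), the convergence $\Phi_\eta\to\Phi$ in $L^2_{\mathrm{loc}}(\R^d\setminus\{0\})$ and the integrability needed for Lemma~\ref{lem:ratespectral} are not justified.
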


This recovers and extends the qualitative results
in~\cite{MR3916950,CesbronMellet} to general equilibria, with
quantitative error estimates and formulas for the diffusion
coefficient. In~\cite{MR3916950,CesbronMellet}, the moment method initiated by Mellet is used to derive a fractional limit in the case
$\beta=0$. It raises several interesting questions: (1) can our
approach be extended to $s \in (0,\frac12)$? (this seems to be a
technical difficulty), (2) is the fractional diffusive limit
possible for infinite mass equilibria?  (i.e. $\alpha <0$), (3)
can the connexion between the kinetic Lévy-Fokker-Planck equation
with $\alpha=2s$ (for which the $\cL$ is the generator of a Lévy
process) and the standard kinetic Fokker-Planck equation with
Gaussian equilibrium be clarified as $s \to 1$?  (our diffusion
constant $\kappa$ above diverges as $s\to 1$ so the two limits in
$\varepsilon \to 0$ and $s \to 1$ do not commute which calls for
further investigation).

Let us summarise our contributions. Theorem~\ref{theo:main} and
Corollaries~\ref{cor:scat}--\ref{cor:FP}--\ref{cor:LFP} recover
the results of
\cite{MR3916950,MR2861578,MR3984748,MR1803225,fournier2018dimensional,fournier2018anomalous,lebeau2017diffusion,Mel,M3,MR3342190}
with a shorter and unified constructive method and prove new
results for (1) Lévy-Fokker-Planck operators, (2) scattering
operators with decaying collision kernel and infinite mass
equilibria and importantly (3) Fokker-Planck operators in any
dimension (for which formulas for the diffusion coefficient were
not known). The quantitative error in this fluid approximation
seems to also be novel for all equations considered. Note finally
that like the abstract Theorem~\ref{theo:main},
Corollaries~\ref{cor:scat}--\ref{cor:FP}--\ref{cor:LFP} are
stated with the exact equilibrium of Hypothesis
\ref{hyp:functional}, but can be extended to more general
equilibria, see Section~\ref{sec:appendix}. Moreover, it would be
interesting to try and apply this method in other settings such
as~\cite{BardosGolseMoyano,frisch1977non} (radiative transfer
theory), \cite{borgers1992diffusion,MR1788479,MR1632712}
(rarefied gas in a region between two parallel plates),
\cite{MR4152643,MR3861298,MR4064198} (domains with interface or
boundaries), \cite{MR3636614} (scattering with external
acceleration field), \cite{perthame_fractional_2018} (models for
chemotaxis) and~\cite{MR3370610,MR3422647,MR3465990} (adding a
local conservation of momentum).

The method of the present paper extends to the fractional
diffusive limit the approach pioneered
in~\cite{Nicolaenko,Ellis-Pinsky} of constructing exact
dispersion laws in the regime of parabolic time-space scaling and
small eigenvalues; this extension is inspired by the recent
one-dimensional result~\cite{lebeau2017diffusion} and in
particular we use and generalise the idea of rescaling velocities
to obtain a non-trivial dispersion law in~\cite{lebeau2017diffusion}. In comparison with~\cite{lebeau2017diffusion}, the main novelty of
the present paper is a quantitative spectral method for
constructing the branch of fluid eigenvalue:
in~\cite{lebeau2017diffusion} it was done by a one-dimensional
argument connecting two infinite series on $\R_-$ and $\R_+$ (and
it was done by fixed points in the simpler case of classical
diffusive limit in the older
works~\cite{Nicolaenko,Ellis-Pinsky}).

Let us now compare our paper with the previous recent works by
probabilists~\cite{fournier2018dimensional,fournier2018anomalous}. In
probabilistic terms, we try to describe particles moving in the
full $d$-dimensional space along ${\rm d} X_t = V_t \dd t$ with
velocities $V_t$ following a reversible process with invariant
measure of the form given in \eqref{hyp:functional}. The velocity
process is typically of scattering type, or Langevin type with
drift and Brownian or non-Gaussian L\'evy-type noises. We show in
Theorem~\ref{theo:main} that the rescaled process
$\varepsilon X_{\theta(\varepsilon)^{-1}t}$ converges, with
explicit rates and multiplicative constants, towards a Brownian
motion when $\alpha \geq 2 + \beta$, and towards a radially
symmetric $\zeta$-stable process when
$\alpha \in [0,2+\beta)$. In spite of using quite different
languages, the common point
between~\cite{fournier2018dimensional,fournier2018anomalous} and
the present paper is the use of a scaling in velocity, which
corresponds to applying some power function to the random
variable in the probabilistic viewpoint and corresponds to the
study of the rescaled fluid mode
$\Phi_\eta := \phi_\eta(\eta^{-\frac{1}{1+\beta}} \cdot )$ in our
study. Note finally that the eigenvalue problem we study to
compute the limit diffusion coefficient does not seem to have a
counterpart
in~\cite{fournier2018dimensional,fournier2018anomalous}.

The rest of the paper is structured as follows. Section
\ref{sec:duality} is devoted to the proof of
Theorem~\ref{theo:main} assuming
Lemmas~\ref{lem:existencespectral}, \ref{lem:ratespectral} and
\ref{lem:diffcoeff}. We then prove
Lemma~\ref{lem:existencespectral} (construction of the fluid
mode) in Section~\ref{sec:branch}, Lemma~\ref{lem:ratespectral}
(scaling of the fluid mode) in Section~\ref{sec:ratespectral},
and Lemma~\ref{lem:diffcoeff} (derivation of the diffusion
coefficient) in
Section~\ref{sec:diffcoeff}. Sections~\ref{sec:scatt}-\ref{sec:kfp}-\ref{sec:lkfp}
prove the abstract hypotheses on the three concrete models; one
argument of independent interest is a tightness estimate for the
Schrödinger-type equation satisfied by the rescaled fluid mode in
the cases of Fokker-Planck operators, see
Lemma~\ref{lem:gain-mmt-rescaled}. Finally,
Section~\ref{sec:appendix} briefly discusses extensions of our
results to more general equilibrium distributions and operators.

\subsection*{Acknowledgments}

The authors wish to thank Jean Dolbeault for an enlightening discussion on Hardy-Poincaré inequalities, Marc Briant for a careful reading and feedback leading to an improvement of the statement of the main abstract result, and Christian Schmeiser and Sara Merino-Aceituno for discussions in this initial phase of this work. Finally they thank Nicolas Fournier and Camille Tardif for crucial comments, helpful discussions and pointing out a mistake in a previous version.
    
%%%%%%%%%%%%%%%%%%%%%%%%%%%%%%%%%%%%%%%%%%%%%%%%%%%%%%%%%%%%%%%%%

\section{Proof of Theorem~\ref{theo:main} (convergence)}
\label{sec:duality}

In this section we assume the Lemmas~\ref{lem:existencespectral},
\ref{lem:ratespectral} and \ref{lem:diffcoeff} to hold,
$\alpha \ge 0$, and prove Theorem~\ref{theo:main}. Consider~\eqref{eq:kinetich} and the rescaling
\begin{align*}
  h_\varepsilon(t,x,v) := h
  \left(\frac{t}{\theta(\varepsilon)},\frac{x}{\varepsilon},v
  \right) =
  \frac{f_\varepsilon(t,x,v)}{\cM(v)} =
  \frac{f\left(\frac{t}{\theta(\varepsilon)},\frac{x}{\varepsilon},v
  \right)}{\cM(v)}. 
\end{align*}
It satisfies the equation
\begin{equation}
  \label{eq:hvar}
  \theta(\varepsilon) \partial_t h_\varepsilon +
  \varepsilon v \cdot \nabla_x h_\varepsilon = L h_\varepsilon.
\end{equation}

%%%%%%%%%%%%%%%%%%%%%%%%%%%%%%%%%%%%%%%%%%%%%%%%%%%%%%%%%%%%%%%%%%%%%%%%%%%%%%%%
\subsection{The energy estimate}

Integrate~\eqref{eq:hvar} against $h_\varepsilon \cM$ in $t,x,v$,
and take the real part:
\begin{align*}
  \frac{\theta(\varepsilon)}{2} \| h_\varepsilon(t) \| ^2
  &  = \frac{\theta(\varepsilon)}{2} \| h_\varepsilon(0) \| ^2
    + \int_0 ^t \Re \, \big \langle L h_\varepsilon(\tau), 
    h_\varepsilon (\tau) \big\rangle \dd \tau \\
  & \le \frac{\theta(\varepsilon)}{2} \| h_\varepsilon(0) \| ^2
    - \lambda \int_0 ^t \left\| h_\varepsilon(\tau) -
    r_\varepsilon(\tau) \right\|_{-\beta} ^2 \dd \tau
\end{align*}
where we have used Hypothesis~\ref{hyp:coercivity} and 
\begin{equation*}
  r_\varepsilon(t,x) := \int_{\R^d} h_\varepsilon(t,x,v)
  \cM_\beta(v) \dd v.
\end{equation*}
This proves
\begin{align}
  \label{eq:bound-energy}
  \forall \, t \ge 0, \quad
  \| h_\varepsilon(t) \| ^2 \le \| h_\varepsilon(0) \| ^2  \
  \text{ and } \ \int_0 ^t \left\| h_\varepsilon(\tau) -
  r_\varepsilon(\tau) \right\|_{-\beta} ^2 \dd \tau \le
  \frac{\theta(\varepsilon)}{2 \lambda} \|
  h_\varepsilon(0) \| ^2.
  %= \frac{\theta(\varepsilon)}{2 \lambda}.
\end{align}

\subsection{Framework of the calculations}

Denote $\xi$ to be the Fourier variable of $x$, and Fourier-transform~\eqref{eq:hvar} in $x$ to get on
$\hat h_\varepsilon(t,\xi,v)$
\begin{equation}
  \label{eq:g}
  \theta(\varepsilon) \partial_t \hat h_\varepsilon = L \hat
  h_\varepsilon - i \varepsilon (v \cdot \xi) \hat h_\varepsilon.
\end{equation}
Note that \eqref{eq:bound-energy} and the Plancherel theorem
imply $\hat h_\varepsilon \in L^\infty_t(\R^+;L^2_{\xi,v}(\cM))$ and
\begin{equation}\label{eq:bound-energy-fourier}
  \big\| \hat h_\varepsilon - \hat r_\varepsilon 
  \big\|_{L^2_t(\R^+;L^2_{\xi,v}(\cM_\beta))} \lesssim
  \theta(\varepsilon)^\frac12 \|
  h_\varepsilon(0) \|. 
\end{equation}

Denote $\xi =: |\xi| \sigma$ and $\eta := \varepsilon
|\xi|$. Test~\eqref{eq:g} against $\cM \phi_{\eta}$ with
$\phi_\eta$ constructed in Lemma~\ref{lem:existencespectral}:
\begin{align}
  \label{eq:hphi-energy}
  \theta(\varepsilon)\dt \left\langle \hat h_\varepsilon,
  \phi_{\eta} \right\rangle
  & = \left\langle L \hat h_\varepsilon - i \varepsilon (v \cdot
    \xi) \hat h_\varepsilon , \phi_{\eta} \right\rangle
   = \left\langle \hat h_\varepsilon , L^*\left(
    \phi_{\eta} \right) + i \varepsilon (v \cdot \xi) \phi_{\eta} 
    \right\rangle \\
  \nonumber
  & = - \mu(\eta) \left\langle \hat h_\varepsilon , \wv^{-\beta}
    \phi_{\eta} \right\rangle.
\end{align}
We then split the integrals as follows:
\begin{align*}
  \left\langle \hat h_\varepsilon, \phi_{\eta} \right\rangle
  & = \hat r_\varepsilon \left\langle 1, 
    \phi_{\eta} \right\rangle  + \left\langle \hat h_\varepsilon
    - \hat r_\varepsilon , 
    \phi_{\eta} \right\rangle =: \left\langle 1, \phi_{\eta}
    \right\rangle
    \left[ \hat r_\varepsilon - E_1 \right] \\[3mm]
  \left\langle \hat h_\varepsilon , \wv^{-\beta} \phi_{\eta} 
  \right\rangle
  &= \hat r_\varepsilon \left\langle 1 , \wv^{-\beta} \phi_{\eta}  
    \right\rangle + \left\langle \hat h_\varepsilon - \hat r_\varepsilon,
    \wv^{-\beta} \phi_{\eta} \right\rangle =:
    \left\langle 1, \phi_{\eta} \right\rangle
    \frac{\theta(\varepsilon)}{\mu(\eta)}
    \left[  \kappa_\eta \hat r_\varepsilon - E_2\right]
\end{align*}
with the definitions (using the normalisation
$\langle 1,\wv^{-\beta} \phi_{\eta} \rangle =1$)
\begin{align*}
  & \kappa_\eta := \frac{\mu(\eta)\left\langle 1,
    \wv^{-\beta} \phi_{\eta}  
    \right\rangle}{\theta(\varepsilon)\left\langle 1,  \phi_{\eta}
    \right\rangle} =
    \frac{\mu(\eta)}{\theta(\varepsilon)\left\langle 1,  \phi_{\eta}
    \right\rangle},  \\
  & E_1 := -\frac{\left\langle \hat h_\varepsilon - \hat r_\varepsilon, 
    \phi_{\eta} \right\rangle}{\left\langle 1, 
    \phi_{\eta} \right\rangle} \quad \text{ and } \quad 
   E_2 := -\frac{\mu(\eta)\left\langle \hat h_\varepsilon
    - \hat r_\varepsilon, \wv^{-\beta} \phi_{\eta} 
    \right\rangle}
    {\theta(\varepsilon)\left\langle 1, \phi_{\eta} \right\rangle }.
\end{align*} 
Consequently, \eqref{eq:hphi-energy} gives
\begin{equation*}
  \partial_t \hat r_\varepsilon + \kappa_\eta \hat r_\varepsilon  =
  \partial_t E_1 + E_2.
\end{equation*}
We then want to pass to the limit $\varepsilon \to 0$ (hence
$\eta \to 0$ for each frequency $\xi$).

\subsection{Estimating $\kappa_\eta$, $\partial_t E_1$ and
  $E_2$.}

Lemma~\ref{lem:existencespectral}-\ref{lem:ratespectral} yield
\begin{equation*}
  \lim_{\varepsilon \to 0}  \frac{\mu(\eta)}{ \theta(\varepsilon)} =
  \mu_0 \vert \xi\vert^{\zeta} \quad \text{ with } \quad \zeta :=
  \frac{\alpha + \beta}{1+\beta}, 
\end{equation*}
with constructive rate, for each frequency $\xi \in \R^d$ (note
that in the cases $\alpha=0$ or $\alpha = 2+\beta$, the error in
the convergence includes a loss of frequency weight
$|\ln |\xi||$). Lemma~\ref{lem:diffcoeff} implies
\begin{equation}
  \lim_{\eta \to 0} \; \kappa_\eta= \kappa |\xi|^\zeta =
  \mu_0 |\xi|^\zeta \times 
  \begin{cases}
    \ds \| \cM \|_{L^1(\R^d)}^{-1}
    & \text{ when } \alpha >0,
    \\[4mm] \ds
    \frac{1+\beta}{|\mathbb{S}^{d-1}|}
    & \text{ when } \alpha = 0,
  \end{cases}
\end{equation}
with constructive convergence rate and 
\begin{equation*}
\frac{\mu(\eta)}{\theta(\varepsilon)
    \vert \left\langle 1,\phi_{\eta} \right\rangle \vert}
    \lesssim |\xi|^{\zeta}.
\end{equation*} 

To estimate $E_2$, write
\begin{align*}
  \left| \left\langle \hat h_\varepsilon  -  \hat r_\varepsilon,
  \wdot^{-\beta} \phi_{\eta} \right\rangle \right|
  \lesssim \left\| \hat h_\varepsilon  -  \hat r_\varepsilon
  \right\|_{-\beta}
\end{align*}
where we have used $\|\phi_\eta\|_{-\beta} =1$. All in all, we
get, using again Lemmas~\ref{lem:diffcoeff}
and~\ref{lem:zeromoment},
\begin{equation}
  \label{eq:bound-E2}
  \left\vert E_2 \right\vert \lesssim
  \frac{\mu(\eta)}{\theta(\varepsilon)|\left\langle 1, 
      \phi_{\eta} \right\rangle|}
  \left\| \hat h_\varepsilon  -  \hat r_\varepsilon
  \right\|_{-\beta} \lesssim \left\| \hat h_\varepsilon  -  \hat
    r_\varepsilon \right\|_{-\beta} |\xi|^{\zeta}.
\end{equation}
To estimate $E_1$, compute first
\begin{equation*}
  \left| \left\langle \hat h_\varepsilon - \hat r_\varepsilon , \phi_{\eta} 
    \right\rangle \right| \leq \left\| \hat h_\varepsilon - \hat
    r_\varepsilon \right\|_{-\beta} \left\| \phi_{\eta} \right\|_{\beta},
\end{equation*}
to get 
\begin{equation*}
  \left\vert E_1 \right\vert \lesssim
  \frac{\left\| \phi_{\eta} \right\|_{\beta}}{
    \left|\left\langle 1,\phi_{\eta} \right\rangle \right|}
  \left\| \hat h_\varepsilon - \hat r_\varepsilon 
  \right\|_{-\beta}.
\end{equation*}
One then estimates $\left\| \phi_\eta \right\|_\beta$. When
$\alpha > \beta$, it is bounded by construction, and when
$\alpha \leq \beta$,
\begin{equation*}
 \left\| \phi_\eta \right\|_\beta^2 =
  \eta^{\frac{\alpha-\beta}{1+\beta}}
  \int_{\R^d} \left|\Phi_\eta(u)\right|^2
  |u|_\eta^{-d-\alpha+\beta} \dd u.
\end{equation*}
Using the pointwise bound \eqref{eq:mmt-fract} and the moment
bound \eqref{eq:mmt-fract-bis} from Hypothesis
\ref{hyp:scalinginfinite}, the latter integral exists and is
uniformly bounded in $\eta$ for $\alpha \in [0,\beta)$ and is
bounded by $|\ln \eta|$ when $\alpha = \beta$. Thus we get, using
Lemma \ref{lem:zeromoment} to estimate
$\langle 1, \phi_\eta \rangle$ again,
\begin{equation}
  \label{eq:bound-E1}
  \left\vert E_1 \right\vert \lesssim
  \left\| \hat h_\varepsilon -
    \hat r_\varepsilon  \right\|_{-\beta} \times
  \begin{cases}
    \ds 1,
    & \text{ when } \alpha > \beta,
    \\[3mm]
    \left|
      \ln \left( \varepsilon |\xi|
      \right) \right|^{\frac12}
    & \text{ when } \alpha = \beta,
    \\[3mm] \ds
    (\varepsilon 
    |\xi|)^{\frac{\alpha-\beta}{2(1+\beta)}} 
    & \text{ when } \alpha \in (0,\beta),
    \\[3mm] \ds
    | \ln(\varepsilon |\xi|)|^{-1}
    \left( \varepsilon |\xi|
    \right)^{-\frac{\beta}{2(1+\beta)}} 
    & \text{ when } \alpha = 0.
  \end{cases}
\end{equation}

Set $r:=r(t,x)$ to be solution to $\partial_t r + \kappa |\xi|^\zeta r =0$ with initial data $r(0,\cdot)$ defined in~\eqref{eq:initial-layer-fluid} and deduce that $\omega_\varepsilon := \hat r_\varepsilon - \hat r$ satisfies
\begin{equation*}
  \partial_t \omega_\varepsilon + \kappa |\xi|^\zeta
  \omega_\varepsilon  = \partial_t E_1 + E_2 + \left( \kappa -
    \kappa_\eta \right) \hat r_\varepsilon.
\end{equation*}

We assume
that~\eqref{eq:initial-data}-\eqref{eq:initial-layer-micro}
hold. By Duhamel's formula,
\begin{align*}
  \omega_\varepsilon(t,\xi)
  & =  \omega_\varepsilon(0,\xi) e^{-\kappa |\xi|^\zeta
    t} + \int_0 ^t e^{-\kappa |\xi|^\zeta
    (t-s)} \left[
    \partial_t E_1(s,\xi)+ E_2(s,\xi) + \left( \kappa -
    \kappa_\eta \right) \hat r_\varepsilon (s,\xi) \right] \dd s \\
  & = \omega_\varepsilon(0,\xi) e^{-\kappa |\xi|^\zeta
    t} + E_1(t,\xi) - e^{-\kappa |\xi|^\zeta
    t} E_1(0,\xi) \\
  & \qquad + \int_0 ^t e^{-\kappa |\xi|^\zeta (t-s)} \left[\kappa
    |\xi|^\zeta E_1(s,\xi)+ E_2(s,\xi) + \left( \kappa -
    \kappa_\eta \right) \hat r_\varepsilon (s,\xi) \right] \dd s.
\end{align*}
Define the weight
\begin{equation*}
  W(\xi) :=  \lfloor \xi \rceil^{-\zeta} \times
  \begin{cases}
    1 \quad \text{ when } \quad \alpha
    > \beta, \\[3mm]
    \left| \ln \frac{2|\xi|}{1+|\xi|} \right|^{-1}
    \quad
    \text{ when } \quad \alpha = \beta, \\[3mm]
    |\xi|^{\frac{\beta-|\alpha|}{2(1+\beta)}}
    \lfloor \xi \rceil^{-\frac{\beta-|\alpha|}{2(1+\beta)}}
    \quad
    \text{ when } \quad \alpha \in [0,\beta).
  \end{cases}
\end{equation*}
Then we integrate in $L^2_\xi(W)$ and then in $L^2_t([0,T])$: the estimates~\eqref{eq:bound-energy}-\eqref{eq:bound-E2}-\eqref{eq:bound-E1} imply that the errors $E_1$ and $E_2$ go to zero in $L^2_t([0,T];L^2_\xi(W))$, and~\eqref{eq:initial-layer-fluid} ensures that $\omega_\varepsilon(0,\xi)$ goes to zero in $L^2_\xi(W)$, which concludes the proof.

%%%%%%%%%%%%%%%%%%%%%%%%%%%%%%%%%%%%%%%%%%%%%%%%%%%%%%%%%%%%%%%% 
\section{Proof of Lemma~\ref{lem:existencespectral} (construction
  of the fluid mode)}
\label{sec:branch}

In this section we prove Lemma~\ref{lem:existencespectral}, assuming Hypotheses~\ref{hyp:functional}--\ref{hyp:coercivity}--\ref{hyp:large-v} with $\alpha+\beta>0$.  Denote
\begin{equation*}
  \tilde{L}_{\eta} ^* \psi := \wv^{\frac{\beta}{2}} L_\eta^*
  \left( \wdot^{\frac{\beta}{2}} \psi \right) = 
  \wv^{\frac{\beta}{2}} L^* \left( \wdot^{\frac{\beta}{2}}\psi \right)  
  + i \eta \wv^\beta (v \cdot \sigma) \psi.
\end{equation*}
As before, the dependency in $\sigma$ is omitted from the
subscripts for readability.

\subsection{Existence of the resolvent}
\label{subsec:resolvent}

\subsubsection{Near zero}
\label{sss:near}

We first prove that when $z \in B(0,\textsc{r}_0 \Theta(\eta))$
with $\textsc{r}_0$ small enough and $\eta \in (0,\eta_0)$ with
$\eta_0$ small enough, the operator $\tilde{L}_{\eta} ^* - z$ has
a bounded inverse in $L^2_v(\cM)$. Given
$G \in L^2(\wdot^{-\beta} \cM)$ and
$z \in B(0,\textsc{r}_0 \Theta(\eta))$, consider an \textit{a
  priori} solution $F \in L^2(\wdot^{-\beta}\cM)$ to
\begin{align}
  \label{eq:FG-0}
  - L^* F - i \eta (v \cdot \sigma) F - z \wv^{-\beta} F =
  \wv^{-\beta} G.
\end{align}
Recall the decomposition
\begin{equation}
  \label{eq:decomp-bot-0}
  F = \cP F + \cP^\bot F \quad \text{ with } \quad
  \cP F := \int_{\R^d} F(v) \cM_\beta(v) \dd v,
\end{equation}
which is orthogonal for the scalar product associated with
$\| \cdot \|_{-\beta}$. Integrate~\eqref{eq:FG-0} against
$\bar F \cM$ and take the real part to get, using
Hypothesis~\ref{hyp:coercivity} and denoting $r:=|z|$,
\begin{align*}
  & \lambda \left\| \cP^\bot F \right\|^2 _{-\beta} - r \| F
    \|_{-\beta}^2 \le \| G \|_{-\beta} \| F \|_{-\beta} \\
  \Longrightarrow \qquad 
  & \left( \lambda -r \right)
    \left\| \cP^\bot F \right\|^2 _{-\beta}
    \le \| G \|_{-\beta} \| F \|_{-\beta} + r \left| \cP F
    \right|^2 \\
  \Longrightarrow \qquad 
  & \left( \lambda -r \right)
    \left\| \cP^\bot F \right\|^2 _{-\beta}
    \le \frac{\textsc{r}_0 \Theta(\eta)}{2} \| F \|^2 _{-\beta} +
    \frac{1}{2\textsc{r}_0 \Theta(\eta)} \| G \|^2_{-\beta}
    + r \left| \cP F \right|^2 \\
  \Longrightarrow \qquad 
  & \left( \lambda -r \right)
    \left\| \cP^\bot F \right\|^2 _{-\beta}
    \le \frac{\textsc{r}_0 \Theta(\eta)}{2} \| \cP^\bot F \|^2 _{-\beta} +
    \frac{1}{2\textsc{r}_0 \Theta(\eta)} \| G \|^2_{-\beta} +  \frac{3\textsc{r}_0 \Theta(\eta)}{2} 
    \left| \cP F \right|^2 \\
  \Longrightarrow \qquad 
  & \left(  \frac{\lambda}{2} -r \right)
    \left\| \cP^\bot F \right\|^2 _{-\beta}
    \le 
    \frac{1}{2\textsc{r}_0 \Theta(\eta)} \| G \|^2_{-\beta} +\frac{3\textsc{r}_0 \Theta(\eta)}{2} 
    \left| \cP F \right|^2 
\end{align*}
which implies finally for $r$ small enough (say for instance $r < \frac{\lambda}{4}$),
\begin{equation}
  \label{eq:FG-L2-0}
\left\| \cP^\bot F \right\| _{-\beta}
    \lesssim \textsc{r}_0^{\frac12} \Theta(\eta)^{\frac12} \left| \cP F
    \right| + \textsc{r}_0^{-\frac12} \Theta(\eta)^{-\frac12}\| G \|_{-\beta}.
\end{equation}
Consider then a smooth function $0 \le \mathfrak{K} \le 1$,
radially symmetric, and such that $\mathfrak{K} \equiv 1$ on
$B(0,3) \setminus B(0,2)$ and $\mathfrak{K} \equiv 0$ on $B(0,1)$
and outside $B(0,4)$, and denote
$\mathfrak{K}_R(v) := \mathfrak{K}(\frac{v}{R})$ for
$R>0$. Denote
$\tilde{\mathfrak{K}}_R := (v \cdot \sigma) \wv^\beta
\mathfrak{K}_R$ and integrate~\eqref{eq:FG-0} against
$\tilde{\mathfrak{K}}_R \cM$:
\begin{align}
  \nonumber
  - \left\langle L^* F, \tilde{\mathfrak{K}}_R  \right\rangle 
  - i \eta \int_{\R^d} (v \cdot \sigma)^2 F(v)
  \mathfrak{K}_R (v) \wv^\beta \cM(v) \dd v - z \int_{\R^d} F(v)
  \tilde{\mathfrak{K}}_R(v) \wv^{-\beta} \cM(v) \dd v \\
   \label{eq:trunc1-0}
  = \int_{\R^d} G(v) \tilde{\mathfrak{K}}_R(v) \wv^{-\beta}
  \cM(v) \dd v.
\end{align}
Using the decomposition~\eqref{eq:decomp-bot-0}, $L^* 1=0$ and Hypothesis~\ref{hyp:large-v}, we have, for $\alpha \in (-\beta,2+\beta)$,
\begin{equation}
  \label{eq:trunc2-0}
  \left| \left\langle L^* F, \tilde{\mathfrak{K}}_R \right\rangle \right| =
  \left| \left\langle L^* \left( \cP ^\bot F \right),
      \tilde{\mathfrak{K}}_R \right\rangle \right| 
  \le \left\Vert L \left( \tilde{\mathfrak{K}}_R \right)
  \right\Vert_\beta \left\Vert  \cP ^\bot F  \right\Vert_{-\beta}
  \lesssim  R^{1+\beta-\frac{\alpha+\beta}{2}} \left\Vert  \cP ^\bot F
  \right\Vert_{-\beta}. 
\end{equation}
Observe also that
\begin{align}
  \nonumber
  & \left| \int_{\R^d} (v \cdot \sigma)^2
    F (v) \mathfrak{K}_R (v) \wv^\beta \cM(v) \dd v \right| \\
  \nonumber
  & \gtrsim \left|\cP F\right| \left|
    \int_{\R^d} (v \cdot \sigma)^2 \mathfrak{K}_R(v) \wv^\beta
    \cM(v) \dd v \right| - \left| \int_{\R^d} (v \cdot \sigma)^2
    \cP^\bot F (v) \mathfrak{K}_R (v) \wv^\beta \cM(v) \dd v \right|\\
  \nonumber
  & \gtrsim R^{2+\beta-\alpha} \left|\cP F\right|
    -  \left( \int_{\R^d} (v
    \cdot \sigma)^4  \mathfrak{K}_R (v)^2  \wv^{3\beta} \cM(v) \dd v
    \right)^{\frac12} \left\| \cP^\bot F \right\|_{-\beta} \\
  \label{eq:trunc3-0}
  & \geq R^{2+\beta-\alpha} \left|\cP F\right|
    - R^{2+\frac{3\beta}{2}-\frac{\alpha}{2}}
    \left\| \cP^\bot F \right\|_{-\beta}.
\end{align}
Then, we have
\begin{align}
  \nonumber
  \left| \int_{\R^d} F(v) \tilde{\mathfrak{K}}_R(v)
  \wv^{-\beta} \cM(v) \dd v \right|
  & = \left| \int_{\R^d} (v \cdot \sigma)^2 \cP^\bot
    F(v) \mathfrak{K}_R(v) \cM(v) \dd v \right| \\
  \nonumber
  & \lesssim \left\| \cP^\bot F \right\|_{-\beta} \left(
    \int_{\R^d}  (v \cdot \sigma)^2 \mathfrak{K}_R(v)^2
    \wv^{\beta} \cM(v) \dd v \right)^{\frac12} \\
  \label{eq:trunc4-0}
  & \lesssim R^{1+\beta-\frac{\alpha+\beta}{2}} \left\| \cP^\bot F
    \right\|_{-\beta}.
\end{align}
Finally, we have
\begin{align}
  \nonumber
  \left| \int_{\R^d} G(v) \tilde{\mathfrak{K}}_R(v) \wv^{-\beta}
  \cM(v) \dd v \right|
  &\lesssim \left\| G
    \right\|_{-\beta} \left(
    \int_{\R^d}  (v \cdot \sigma)^2 \mathfrak{K}_R(v)^2
    \wv^{\beta} \cM(v) \dd v \right)^{\frac12}\\
   \label{eq:trunc4-0}
  & \lesssim R^{1+\beta-\frac{\alpha+\beta}{2}} \left\| G
    \right\|_{-\beta}.
\end{align}

Combining~\eqref{eq:trunc1-0}--\eqref{eq:trunc2-0}--\eqref{eq:trunc3-0}--\eqref{eq:trunc4-0} yields
\begin{multline*}
\eta R^{2+\beta-\alpha} \left|\cP F\right| \leq \eta R^{2+\frac{3\beta}{2}-\frac{\alpha}{2}}
    \left\| \cP^\bot F \right\|_{-\beta} + R^{1+\beta-\frac{\alpha+\beta}{2}} \left\| G
    \right\|_{-\beta} \\ + r R^{1+\beta-\frac{\alpha+\beta}{2}} \left\| \cP^\bot F
    \right\|_{-\beta} + R^{1+\beta-\frac{\alpha+\beta}{2}} \left\Vert  \cP ^\bot F
  \right\Vert_{-\beta}.
\end{multline*}
Observe that the last but one term is negligible in front of the last one when $r$ is small, giving
\begin{equation*}
\left|\cP F\right| \lesssim \left( R^{\frac{\alpha + \beta}{2}}  + \eta^{-1} R^{-1+\frac{\alpha-\beta}{2}} \right) 
    \left\| \cP^\bot F \right\|_{-\beta} + \eta^{-1}R^{-1 + \frac{\alpha-\beta}{2}} \left\| G
    \right\|_{-\beta}.
\end{equation*}
Taking $R = \eta^{-\frac{1}{1+\beta}}$, we have
\begin{equation}
\left|\cP F\right| \lesssim \Theta(\eta)^{-\frac12} \left(  \left\| \cP^\bot F \right\|_{-\beta} + \left\| G   \right\|_{-\beta} \right).
\end{equation}
Combining the latter with \eqref{eq:FG-L2-0}, and for
$r \le \textsc{r}_0 \Theta(\eta)$ with $\Theta(\eta)$ defined
in~\eqref{eq:def-Theta}) and $\textsc{r}_0>0$ small enough, we have,
\begin{align}
  \label{eq:apriori-near}
  \left\vert \cP F \right\vert
  \lesssim \Theta(\eta)^{-1} \left\| G
  \right\|_{-\beta} \quad \text{ and } \quad
  \left\| \cP^\bot F \right\|_{-\beta} 
  \lesssim \Theta(\eta)^{-\frac12}\left\| G
    \right\|_{-\beta}.
\end{align}

When $\alpha=2+\beta$, the calculation is slightly modified as follows: we recall $\chi_R$ and $\tilde{\chi}_R := (v \cdot \sigma) \wv^\beta \chi_R$ defined in Hypothesis~\ref{hyp:large-v}, and integrate~\eqref{eq:FG-0} against $\tilde{\chi}_R \cM$ to get
\begin{align*}
  & \left| \left\langle L^* F, \tilde{\chi}_R \right\rangle \right|
    \lesssim (\ln R)^{\frac12} \left\Vert  \cP ^\bot F
    \right\Vert_{-\beta} \\
  & \left| \int_{\R^d} (v \cdot \sigma)^2
    F (v) \chi_R (v) \wv^\beta \cM(v) \dd v \right| 
    \geq (\ln R) \left|\cP[F]\right|
    - R^{1+\beta} \left\| \cP^\bot F \right\|_{-\beta}, \\
  & \left| \int_{\R^d} F(v) \tilde{\chi}_R(v)
  \wv^{-\beta} \cM(v) \dd v \right|
    \lesssim (\ln R)^{\frac12} \left\| \cP^\bot F \right\|_{-\beta}, \\
  & \left| \int_{\R^d} G(v) \tilde{\chi}_R(v)
  \wv^{-\beta} \cM(v) \dd v \right|
    \lesssim (\ln R)^{\frac12} \left\| G \right\|_{-\beta},
\end{align*}
which results in the same estimate~\eqref{eq:apriori-near} (with the choices $R = \eta^{-\frac{1}{1+\beta}}$, $r \le \textsc{r}_0 \Theta(\eta)$ and $\Theta(\eta) = \eta^2 |\ln \eta|$ with $\textsc{r}_0$ small enough).

Given $z \in B(0,\textsc{r}_0 \Theta(\eta))$ with $\textsc{r}_0$
small enough and $\eta \in (0,\eta_0)$ with $\eta_0$ small
enough, we deduce from~\eqref{eq:apriori-near} on the {\it a priori} inverse of $\tilde L^*_\eta -z$ the existence
of a solution to~\eqref{eq:FG-0} with the uniform estimate
$\| F \|_{-\beta} \lesssim \Theta(\eta)^{-1} \| G
\|_{-\beta}$. The equation~\eqref{eq:FG-0} re-writes
\begin{align}
  \label{eq:FGtilde-0}
  - \tilde L^* \tilde F - i \eta (v \cdot \sigma) \wv^\beta
  \tilde F - z \tilde F = \wv^{-\frac{\beta}{2}} G  \in
  L^2_v(\cM), 
\end{align}
with $\tilde F := \wdot^{-\frac{\beta}{2}} F$. Since
(by Hypothesis~\ref{hyp:coercivity}) $\tilde L^*$ generates a
contraction semigroup in $L^2_v(\cM)$, it is a standard result
(see~\cite[Theorem~II.3.15]{MR1721989}) that $\tilde L^*$ is
maximal dissipative. Therefore, given any $M\ge 1$, the operator
$\tilde L^* _{\eta,M} := \tilde L^* + i \eta (v \cdot \sigma)
\wv^\beta \chi_M(v)$ is maximal dissipative (perturbation by a
bounded purely imaginary multiplicative operator). Observe that
the previous a priori estimate~\eqref{eq:apriori-near} holds for
$\tilde L^*_{\eta,M}$ by the same calculation, and uniformly as
$M \to +\infty$. This implies that for each $M \ge 1$ and
$z \in S(0,r)$, there is $\tilde F_M \in L^2_v(\cM)$ that solves
$- \tilde L^*_M \tilde F_M - z \tilde F_M =
\wv^{-\frac{\beta}{2}} G$, and that $\tilde F_M$ is uniformly
bounded in $L^2_v(\cM)$ as $M \to \infty$. Taking a subsequence
weakly converging to some $\tilde F \in L^2_v(\cM)$ as
$M \to \infty$ gives a solution to~\eqref{eq:FGtilde-0} and thus
to~\eqref{eq:FG-0}.

\subsubsection{Away from zero}

We now prove that when $z \in \mathbb C$ with
$|z| \in (\textsc{r}_1 \Theta(\eta),r_0)$ with $\textsc{r}_1$
\emph{large} enough and $r_0$ \emph{small} enough and $\eta$
\emph{small} enough, the operator $\tilde{L}_{\eta} ^* - z$ has a
bounded inverse in $L^2_v(\cM)$, and the bound is uniform in
$|z| \in (\textsc{r}_1 \Theta(\eta),r_0)$. Given
$G \in L^2(\wdot^{-\beta} \cM)$ and $z \in S(0,r)$, consider an
\textit{a priori} solution $F \in L^2(\wdot^{-\beta}\cM)$
to~\eqref{eq:FG-0}. Integrating~\eqref{eq:FG-0} against
$\bar F \cM$, taking the real part and using
Hypothesis~\ref{hyp:coercivity} yields~\eqref{eq:FG-L2-0} again.
Consider then $0 \le \chi \le 1$ smooth radially symmetric and
such that $\chi \equiv 1$ on $B(0,1)$ and $\chi \equiv 0$ outside
$B(0,2)$, and denote $\chi_R(v) := \chi(\frac{v}{R})$ for $R
>0$. Integrate~\eqref{eq:FG-0} against $\chi_R \cM$:
\begin{align}
  \label{eq:trunc1}
  - \left\langle L^* F, \chi_R  \right\rangle 
  - i \eta \int_{\R^d} (v \cdot \sigma) F(v)
  \chi_R (v) \cM(v) \dd v - z \cP_R F = \cP_R G
\end{align}
where we denote the truncated average
\begin{align*}
  \cP_R F := \int_{\R^d} F(v) \chi_R(v) \cM_\beta(v) \dd v.
\end{align*}
Using the decomposition~\eqref{eq:decomp-bot-0}, $L^* 1=0$ and
Hypothesis~\ref{hyp:large-v}: 
\begin{equation}
  \label{eq:trunc2}
  \left| \left\langle L^* F, \chi_R \right\rangle \right| =
  \left| \left\langle L^* \left( \cP ^\bot F \right),
      \chi_R \right\rangle \right| 
  \le \left\Vert L \left( \chi_R \right) \right\Vert_\beta
  \left\Vert  \cP ^\bot F  \right\Vert_{-\beta}
  \lesssim
  R^{-\frac{\alpha+\beta}{2}} \left\Vert  \cP ^\bot F
  \right\Vert_{-\beta}. 
\end{equation}
Observe also that
\begin{align}
  \nonumber
  \left| \int_{\R^d} (v \cdot \sigma)
  F (v) \chi_R (v) \cM(v) \dd v \right|
  & = \left| \int_{\R^d} (v \cdot \sigma)
    \left[ \cP ^\bot F (v) \right] \chi_R (v) \cM(v) \dd v
    \right| \\
  \label{eq:trunc3}
  & \leq \left( \int_{|v| \le 2R} (v \cdot \sigma)^2 \wv^\beta \cM(v)
    \, dv \right)^{\frac12} \left\| \cP ^\bot F \right\|_{-\beta} 
    \lesssim \ell(R)\left\| \cP ^\bot F \right\|_{-\beta}
\end{align}
with
\begin{equation}
  \label{eq:lR}
  \ell(R) = \left( \int_{|v| \le 2R} (v \cdot \sigma)^2 \wv^\beta
    \cM (v) \dd v \right)^{\frac12} \lesssim
  \begin{cases}
    1  &\text{when } \alpha > 2 + \beta,\\[2mm]
    \sqrt{\ln(R)}  &\text{when } \alpha = 2 + \beta,\\[2mm]
    R^{1-\frac{\alpha-\beta}{2}} &\text{when } \alpha < 2 +
    \beta.
  \end{cases}
\end{equation}
Combining~\eqref{eq:trunc1}--\eqref{eq:trunc2}--\eqref{eq:trunc3}
yields the following estimate on the truncated average:
\begin{align}
  \label{eq:estim1-FG}
  \left\vert \cP_R F \right\vert \leq \frac{1}{r}
  \left[ \eta \ell(R) +
  R^{-\frac{\alpha+\beta}{2}} \right]
  \left\|  \cP ^\bot F  \right\|_{-\beta}
  + \frac1r \left\| G \right\|_{-\beta}. 
\end{align}
We next estimate the difference between $\cP F$ and $\cP_R F$:
\begin{align*}
  \nonumber
  \left| \cP F - \cP_R F \right|
  & \le \int_{\R^d} |F| \left| 1 - \chi_R
    \right| \cM_\beta(v) \dd v \\
  & \le  \left(
    \int_{\R^d} \left| 1 - \chi_R
    \right|^2 \cM_\beta  \dd v \right)^{\frac12} \| F \|_{-\beta}
    \lesssim R^{-\frac{\alpha+\beta}{2}} \| F \|_{-\beta},
\end{align*}
which implies for $R$ large enough
\begin{equation}
  \label{eq:diffmR}
  |\cP_R F| \lesssim |\cP_R F|  +
  R^{-\frac{\alpha+\beta}{2}}\left\| \cP^\bot F
  \right\|_{-\beta}.
\end{equation}  
Combining~\eqref{eq:estim1-FG} and~\eqref{eq:diffmR}, we deduce
\begin{align}
  \nonumber
  |\cP F|
  &\lesssim \left( \frac{\eta \ell(R) +
    R^{-\frac{\alpha+\beta}{2}}}{r} \right)
    \left\|  \cP ^\bot F  \right\|_{-\beta}
    + R^{-\frac{\alpha+\beta}{2}}\left\|  \cP ^\bot F
    \right\|_{-\beta} + \frac{1}{r}
    \left\| G \right\|_{-\beta},\\
  \label{eq:FG-average}
  &\lesssim  \left( \frac{\eta \ell(R) +
    R^{-\frac{\alpha+\beta}{2}}}{r} \right)
    \left\|  \cP ^\bot F  \right\|_{-\beta} +
    \frac{1}{r} \left\| G \right\|_{-\beta}.
\end{align}
Optimising $R$ so that the two terms in the parenthesis are equal
yields again $R = \eta^{-\frac{1}{1+\beta}}$ (with $\eta$ small
enough so that $R$ is large enough to obtain~\eqref{eq:diffmR})
and therefore
$\eta \ell(R) + R^{-\frac{\alpha+\beta}{2}} \sim
\Theta(\eta)^{\frac12}$ where $\Theta$ was defined
in~\eqref{eq:def-Theta}. Combining~\eqref{eq:FG-L2-0} and
\eqref{eq:FG-average} we get then
\begin{align*}
  \left| \cP F \right| \lesssim
  \frac{\Theta(\eta)^{\frac12}}{r^{\frac12}} \left| \cP F \right|
  + \left( 1 + \frac{\Theta(\eta)^{\frac12}}{r^{\frac12}}
  \right) \frac{1}{r} \left\| G \right\|_{-\beta}. 
\end{align*}
When $r > \textsc{r}_1 \Theta(\eta)$ with $\textsc{r}_1$
\emph{large} enough, $\frac{\Theta(\eta)^{\frac12}}{r^{\frac12}}$
is small and we deduce
\begin{equation}
  \label{eq:apriori-again}
  \left| \cP F \right| \lesssim \frac{1}{r} \left\| G
  \right\|_{-\beta} \quad \text{ and } \quad \left\| \cP^\bot F
  \right\|_{-\beta} \lesssim r^{-\frac12} \left\| G
  \right\|_{-\beta}.
\end{equation}
Arguing as before, given
$|z| \in (\textsc{r}_1 \Theta(\eta),r_0)$ with $\textsc{r}_1$
\emph{large} enough and $r_0$ \emph{small} enough and $\eta$
\emph{small} enough, we deduce from~\eqref{eq:apriori-again} the construction of a solution $F$ to~\eqref{eq:FG-0}
with the uniform estimate
$\| F \|_{-\beta} \lesssim \Theta(\eta)^{-1} \| G
\|_{-\beta}$. Together with Subsubsection~\ref{sss:near}, we have
thus proved that $\tilde L^*$ has no eigenvalues in
$|z| < \textsc{r}_0\Theta(\eta)$ and
$\textsc{r}_1 \Theta(\eta) < |z| < r_0$. We now prove the
existence of a unique eigenvalue in
$|z| \in (\textsc{r}_0 \Theta(\eta),\textsc{r}_1 \Theta(\eta) )$.

\subsection{The spectral projections}

We define the spectral projections
\begin{equation*}
  \mathsf\Pi_{r,\eta} := \frac{1}{2i\pi} \int_{S(0,r)} \left[
    \tilde{L}_{\eta} ^* - z \right]^{-1} \dd z
\end{equation*}
for $r \in (\textsc{r}_1 \Theta(\eta),r_0)$ for $r_0$,
$\textsc{r}_1$ and $\eta$ as above; it is well-defined since we
proved above that $(\tilde{L}_{\eta} ^* - z)^{-1}$ then
exists. In this section, we first estimate the difference between
the projections $\mathsf\Pi_{r,\eta}$ and $\mathsf\Pi_{r,0}$ when acting on $\psi_0 := \wv^{-\frac{\beta}{2}}$ (the kernel of $\tilde{L}_0$) and projected on $\text{Span}(\psi_0)$ and prove that it goes to zero as $\eta \to 0$; this implies that $\Pi_{r,\eta}$ is non-zero for $r$ and $\eta$ small enough and thus proves the existence of an eigenvalue. Second, we amplify the previous estimate and prove that $||| \mathsf\Pi_{r,\eta} - \mathsf\Pi_{r,0} ||| \to 0$ as $\eta \to 0$, which implies that the dimensions of these two projections are the same for $\eta$ small enough. This implies the existence and uniqueness of the eigenvalue in $|z| \in (\textsc{r}_0 \Theta(\eta),\textsc{r}_1 \Theta(\eta) )$ and quantitative convergence estimates as $\eta \to 0$.

\subsection{Preparation for the first scalar estimate}

Recall $\psi_0 = \wdot^{-\frac{\beta}{2}}$, then
\begin{align*}
  {\sf\Pi}_{r,\eta} \psi_0 - {\sf\Pi}_{r,0} \psi_0
  &= \frac{1}{2i\pi}
    \int_{S(0,r)} \left[ \tilde{L}_{\eta}^* - z \right]^{-1}
    \left[ \tilde{L}_{0}^* -  \tilde{L}_{\eta} \right]
    \left[ \tilde{L}_{0}^* - z \right]^{-1} \psi_0 \dd z\\
  &= -\frac{\eta}{2\pi}
    \int_{S(0,r)}  \left[ \tilde{L}_{\eta}^* - z \right]^{-1}
    \left\{ (v\cdot \sigma) \wv^\beta  \left[
    \tilde L^*_0  - z \right]^{-1} \psi_0 \right\}  \dd z\\
  &= \frac{\eta}{2\pi}
    \int_{S(0,r)} \wv^{-\frac{\beta}{2}} F \, \frac{{\rm d}z}{z} 
\end{align*}
where we have used
\begin{align*}
  \left( \tilde L^* _0 -z \right)^{-1} \psi_0 =
  \left[ \wdot^{\frac{\beta}{2}} L \left(
  \wdot^{\frac{\beta}{2}} \cdot \right) -
  z \right]^{-1} \psi_0 = - \frac{1}{z} \psi_0
\end{align*}
and we have defined $F$ through 
\begin{align*}
  \left[ \tilde{L}_{\eta} ^* - z \right]^{-1}\left[ v' \mapsto
  (v' \cdot \sigma) \lfloor v' \rceil^{\frac{\beta}{2}}\right](v)
  =: \wv^{-\frac{\beta}{2}} F(v),
\end{align*}
that is
\begin{equation}
  \label{eq:F}
  - L^* F - i \eta (v \cdot \sigma) F - z \wv^{-\beta} F =
  (v \cdot \sigma)
\end{equation}
(the dependency of $F$ on $\eta$, $z$ and $\sigma$ is omitted for
readability).

Since ${\sf\Pi}_{r,0} \psi_0 = \psi_0$ and
\begin{equation*}
  \int_{\R^d} {\mathsf\Pi}_{r,0} \psi_0(v) \wv^{-\frac{\beta}2}
  \cM(v) \dd v = \int_{\R^d} \wv^{-\beta} \cM(v) \dd v = \int_{\R^d}
  \cM_\beta(v) \dd v = 1,
\end{equation*}
to prove the existence of an eigenvalue, it is enough to prove
that for $r$ and $\eta$ small enough
\begin{equation*}
  A_{r,\eta} := \left| \int_{\R^d} \left( {\sf\Pi}_{r,\eta}
      \psi_0 - {\sf\Pi}_{r,0} \psi_0 \right) \wv^{-\frac{\beta}2}
    \cM(v) \dd v \right| < 1.
\end{equation*}
  
Using the decomposition~\eqref{eq:decomp-bot-0} one gets
\begin{equation}
  \label{eq:A}
  A_{r,\eta} = \left| \frac{\eta }{2\pi} \int_{S(0,r)} \frac{\cP
      F}{z} \,\dd z \right|.
\end{equation}
The next three steps are devoted to estimating $\cP F$.

\subsection{Localised average estimate}
\label{subsec:average}

Integrate~\eqref{eq:F} against $\chi_R \cM$: the right hand side
vanishes since $\cM$ and $\chi_R$ are even and one gets
\begin{align*}
  - \left\langle L^* F, \chi_R  \right\rangle 
  - i \eta \int_{\R^d} (v \cdot \sigma) F(v) \chi_R (v) \cM(v)
  \dd v - z \cP_R F =0.
\end{align*}
Using the same argument as for~\eqref{eq:trunc2}
and~\eqref{eq:trunc3}, we get
\begin{align}
  \label{eq:estim1}
  \left\vert \cP_R F \right\vert \leq \frac{1}{r} \left[ \eta
  \ell(R) + R^{-\frac{\alpha+\beta}{2}} \right]
  \left\|  \cP ^\bot F  \right\|_{-\beta}
\end{align}
and using~\eqref{eq:diffmR} we deduce, for
$R = \eta^{-\frac{1}{1+\beta}}$ large enough,
\begin{align*}
  |\cP F| \lesssim  \frac{\left(\eta \ell(R) + 
  R^{-\frac{\alpha+\beta}{2}} \right)}{r}
  \left\|  \cP ^\bot F  \right\|_{-\beta}
  \lesssim \frac{\Theta(\eta)^{\frac12}}{r}
  \left\|  \cP ^\bot F  \right\|_{-\beta}.
\end{align*}

\subsection{$L^2$ estimate}

Re-organise~\eqref{eq:F} as
\begin{align*}
  - L^* F - i \eta (v \cdot \sigma)
  \left( F - \frac{1}{i\eta} \right) = z \wv^{-\beta} F,
\end{align*}
integrate it against
\[
  \overline{\left(F - \frac{1}{i\eta} \right)} \cM
\]
and take the real part to obtain
\begin{equation*}
  - \Re \left\langle L^* F,
    \left( F - \frac{1}{i \eta} \right) \right\rangle
  = \Re \left( z \int_{\R^d} \wv^{-\beta} F
    \overline{\left(F -\frac{1}{i\eta} \right)} \cM \dd v
  \right).
\end{equation*}
The left hand side satisfies (using $L 1 =0$ and
Hypothesis~\ref{hyp:coercivity})
\begin{align*}
  - \Re \left\langle L^* F,
  \left( F - \frac{1}{i \eta} \right) \right\rangle =
  - \Re \left\langle L^* F, F \right\rangle \ge \lambda
  \left\| \cP^\bot F \right\|^2_{-\beta},
\end{align*}
and the right hand side is bounded by
\begin{align*}
  \Re \left( z \int_{\R^d} \wv^{-\beta} F  \overline{\left(F -
  \frac{1}{i\eta} \right)} \cM \dd v \right) \le r \| F
  \|_{-\beta}^2 + \frac{r}{\eta} \left|\cP F\right|.
\end{align*}
This results in the estimate (using again the orthogonal
decomposition)
\begin{align*}
  \lambda \left\| \cP^\bot F \right\|^2_{-\beta} 
  & \le r \| F \|^2_{-\beta}  + \frac{r}{\eta} \left|\cP
    F\right|\\
  & \le r \left|\cP F\right|^2 + r \left\| \cP^\bot F
    \right\|_{-\beta} ^2 +  \frac{r}{\eta} \left|\cP F\right|,
\end{align*}
which implies, when $r$ is small, 
\begin{align}
  \label{eq:estim2}
  \left\| \cP^\bot F \right\|^2_{-\beta} \lesssim r
  \left|\cP F \right|^2
  + \frac{r}{\eta} \left|\cP F\right|.
\end{align}

\subsection{Synthesis and the first scalar estimate}

The two previous steps lead to
\begin{equation*}
  \begin{cases}
    \ds |\cP F|^2 \lesssim \frac{\Theta(\eta)}{r^2}
    \left\|  \cP ^\bot F  \right\|_{-\beta} ^2,
    \\[3mm] \ds
    \left\| \cP^\bot F \right\|^2_{-\beta} \lesssim r
    \left|\cP F\right|^2
    +  \frac{r}{\eta} \left|\cP F\right|.
  \end{cases}
\end{equation*}  
Plugging the second estimate into the first one, we obtain
\begin{equation}\label{eq:estimate-m}
  |\cP F| \lesssim \frac{\Theta(\eta)}{r} \left|\cP F\right|
  + \frac{\Theta(\eta)}{\eta r}.
\end{equation}
Given $r \in [\textsc{r}_1 \Theta(\eta),r_0)$ with $\textsc{r}_1$
\emph{large} enough and $\eta$ \emph{small} enough so that
$\frac{\Theta(\eta)}{r}$ is small we get
\begin{align}
  \label{eq:estim-m}
  \vert \cP F \vert \lesssim \frac{\Theta(\eta)}{\eta r}.
\end{align}
Plugging this into~\eqref{eq:A} finally yields
\begin{align*}
  A_{r,\eta} \lesssim
  \frac{\eta}{2\pi} \int_{S(0,r)}
  \frac{\Theta(\eta)}{\eta r^2} \,\dd 
  z  \lesssim \frac{\Theta(\eta)}{r},
\end{align*}
which is as small as wanted for
$r \in (\textsc{r}_1 \Theta(\eta),r_0)$ with $\textsc{r}_1$
\emph{large} enough and $\eta$ \emph{small} enough.

\subsection{Estimating the full norm of the difference of the
  projections at $\psi_0$}

Combining~\eqref{eq:estim2} and~\eqref{eq:estim-m} yields
\begin{align*}
  \left\| \cP^\bot F \right\|_{-\beta} \lesssim
  \frac{\Theta(\eta)}{r^{\frac12} \eta} +
  \frac{\Theta(\eta)^{\frac12}}{\eta}.
\end{align*}
This implies
\begin{align}
  \nonumber
  \Vert {\sf\Pi}_{r,\eta} \psi_0 - {\sf\Pi}_{r,0}
  \psi_0 \Vert
  & \lesssim \frac{\eta}{2\pi}
    \int_{S(0,r)} \frac{1}{r} \Vert F \Vert_{-\beta}\dd z \\
  \nonumber
  & \lesssim \frac{\eta}{2\pi}
    \int_{S(0,r)} \frac{1}{r} \left( \left|\cP F\right| + \left\|
    \cP^\bot F \right\|_{-\beta} \right) \dd z \\
  \label{eq:full1}
  & \lesssim \frac{1}{r} \Theta(\eta) + \frac{1}{r^{\frac12}}
    \Theta(\eta) + \Theta(\eta)^{\frac12} \lesssim
    \frac{\Theta(\eta)}{r}
\end{align}
which is as small as wanted for
$r \in (\textsc{r}_1\Theta(\eta),r_0)$ with $\textsc{r}_1$
\emph{large} enough and $\eta$ \emph{small} enough.

\subsection{Estimating the full norm of the difference of
  projections}

Take now any $\psi \in L^2(\cM)$. Then
$\wdot^{\frac{\beta}{2}} \psi \in L^2(\wdot^{-\beta}\cM)$ and the
following decomposition holds
\begin{align*}
  \psi = \wv^{-\frac{\beta}{2}} \cP
  \left[\wdot^{\frac{\beta}{2}} \psi\right] +
  \wv^{-\frac{\beta}{2}} \cP^\bot
  \left(\wdot^{\frac{\beta}{2}} \psi \right).
\end{align*}
As a consequence,
\begin{align*}
  \left\|  ({\sf\Pi}_{r,\eta}- {\sf\Pi}_{r,0}) \psi \right\| \leq
  \left| \cP \left[\wdot^{\frac{\beta}{2}} \psi \right] \right|
  \left\| ({\sf\Pi}_{r,\eta}- {\sf\Pi}_{r,0}) \psi_0 \right\|
  + \left\| ({\sf\Pi}_{r,\eta}- {\sf\Pi}_{r,0})
  \left[ \wdot^{-\frac{\beta}{2}}
  \cP^\bot \left( \wdot^{\frac{\beta}{2}} \psi
  \right) \right] \right\|.
\end{align*}
The first term in the right hand side is estimated
by~\eqref{eq:full1}. We estimate the second term in the right
hand side by the triangle inequality:
\begin{align*}
  & \left\| ({\sf\Pi}_{r,\eta}- {\sf\Pi}_{r,0})
    \left[\wdot^{-\frac{\beta}{2}}
    \cP^\bot\left(\wdot^{\frac{\beta}{2}} \psi \right)
    \right] \right\| \\ 
  & \qquad \leq
    \left\| {\sf\Pi}_{r,\eta} \left[ \wdot^{-\frac{\beta}{2}}
    \cP^\bot \left( \wdot^{\frac{\beta}{2}} \psi \right) \right]
    \right\|
    + \left\| {\sf\Pi}_{r,0} \left[ \wdot^{-\frac{\beta}{2}}
    \cP^\bot \left( \wdot^{\frac{\beta}{2}} \psi \right) \right]
    \right\|
\end{align*}
and now consider each term separately. Start with
\begin{align}
  \nonumber
  {\sf\Pi}_{r,\eta} \left[ \wdot^{-\frac{\beta}{2}}
  \cP^\bot \left( \wdot^{\frac{\beta}{2}} \psi \right) \right]
  &= \frac{1}{2i\pi}
    \int_{S(0,r)} \left[ \tilde{L}_{\eta} - z  \right]^{-1}
    \left[ \wdot^{-\frac{\beta}{2}}
    \cP^\bot \left( \wdot^{\frac{\beta}{2}} \psi \right) \right]
    \dd z\\
  \label{eq:int-G}
  &= \frac{1}{2i\pi}
    \int_{S(0,r)} \wv^{-\frac{\beta}{2}} F \dd z 
\end{align}
where this time  $F$ satisfies (as before we omit writing the dependency in $\eta,z,\sigma$)
\begin{equation}
  \label{eq:G}
  - L^*F - i \eta (v \cdot \sigma) F - z\wv^{-\beta} F
  =  \wdot^{- \beta} \cP^\bot\left(\wdot^{\frac{\beta}{2}} \psi
  \right).
\end{equation}  

First, test~\eqref{eq:G} on $\overline{F}\cM$, take the real part
and use $\cP[\cP^\bot(\wdot^{\frac{\beta}{2}} \psi)]=0$:
\begin{align*}
  \lambda \left\| \cP^\bot F \right\|_{-\beta} ^2
  &\le (\Re \,z ) \, \| F\|_{-\beta} ^2
    + \Re \, \left\langle \wdot^{-
    \beta} \cP^\bot(\wdot^{\frac{\beta}{2}} \psi), F
    \right\rangle\\
  &= (\Re \, z) \, \| F\|_{-\beta} ^2
    + \Re \, \left\langle \wdot^{-\beta}
    \cP^\bot(\wdot^{\frac{\beta}{2}} \psi), \cP^\bot
    F \right\rangle\\
  &\leq r \vert \cP F \vert^2 + r \| \cP^\bot F\|_{-\beta} ^2
    + \left\| \cP^\bot \left( \wdot^{\frac{\beta}{2}} \psi \right)
    \right\|_{-\beta} \left\| \cP^\bot F \right\|_{-\beta},
\end{align*}   
which implies, since $r <r_0<\lambda$ stays away from $\lambda$,
\begin{equation}
  \label{eq:FF1}
  \left\| \cP^\bot F \right\|^2_{-\beta} \lesssim
  r \, \left| \cP F \right|^2
  + \left\| \cP^\bot \left( \wdot^{\frac{\beta}{2}}
      \psi \right) \right\|_{-\beta}^2
  \lesssim r \, \left| \cP F \right|^2 + \| \psi \|^2.
\end{equation} 
We now estimate $\cP F$. Integrate~\eqref{eq:G} against
$\chi_R \cM$ with $R = \eta^{-\frac{1}{1+\beta}}$
\begin{align*}
  - \left\langle L^* F, \chi_R \right\rangle
  - i \eta \int_{\R^d} (v \cdot \sigma) F(v) \chi_R (v) \cM(v)
  \dd v - z \cP_R F = \cP_R \left[ \cP^\bot \left(
  \wdot^{\frac{\beta}{2}} \psi \right) \right].
\end{align*}
Using the same arguments as in Subsections~\ref{subsec:resolvent}
and~\ref{subsec:average} we obtain
\begin{align*}
  \left\vert \cP_R F \right\vert 
  \lesssim \frac{\Theta(\eta)^{\frac12}}{r} 
  \left\|  \cP ^\bot F  \right\|_{-\beta} + \frac{1}{r}
  \left| \cP_R \left[ \cP^\bot \left( \wdot^{\frac{\beta}{2}} \psi
  \right) \right] \right|.
\end{align*}
Since $\cP[\cP^\bot(\wdot^{\frac{\beta}{2}} \psi)]=0$, we can
estimate $|\cP_R[\cP^\bot(\wdot^{\frac{\beta}{2}} \psi)]|$ as
follows:
\begin{align*}
  \left| \cP_R \left[ \cP^\bot \left( \wdot^{\frac{\beta}{2}} \psi
  \right) \right] \right|
  & = \left|  \cP \left[ \cP^\bot \left( \wdot^{\frac{\beta}{2}} \psi
    \right) \right] -  \cP_R
    \left[ \cP^\bot \left( \wdot^{\frac{\beta}{2}} \psi
    \right) \right] \right| \\
  & \lesssim R^{-\frac{\alpha+\beta}{2}}
    \left\| \cP^\bot \left( \wdot^{\frac{\beta}{2}} \psi \right)
    \right\|_{-\beta}
    \lesssim R^{-\frac{\alpha+\beta}{2}}\| \psi \| \lesssim
    \Theta(\eta)^{\frac12} \| \psi \|.
\end{align*}
We deduce
\begin{align*}
  \left\vert \cP_R F \right\vert 
  \lesssim \frac{\Theta(\eta)^{\frac12}}{r} \left( 
  \left\|  \cP ^\bot F  \right\|_{-\beta} + \| \psi \| \right),
\end{align*}
and using
\begin{equation*}
  |\cP F|^2 \lesssim |\cP_R F|^2  + R^{-(\alpha+\beta)}\left\|  \cP
    ^\bot F  \right\|_{-\beta}^2 \le |\cP_RF|^2  + \Theta(\eta)
  \left\|  \cP^\bot G \right\|_{-\beta}^2
\end{equation*}
we finally get
\begin{equation}
  \label{eq:FF2}
  |\cP F|^2
  \lesssim \frac{\Theta(\eta)}{r^2} \left( 
    \left\|  \cP ^\bot F  \right\|_{-\beta}^2
    + \| \psi \|^2 \right) 
    + \Theta(\eta) \left\|  \cP ^\bot F \right\|_{-\beta}^2
  \lesssim \frac{\Theta(\eta)}{r^2} \left( 
    \left\|  \cP ^\bot F  \right\|_{-\beta}^2
    + \| \psi \|^2 \right).
\end{equation} 
Combining~\eqref{eq:FF1}--\eqref{eq:FF2} implies for
$r \in [\textsc{r}_1 \Theta(\eta),r_0)$ with $\textsc{r}_1$
\emph{large} enough and $\eta$ \emph{small} enough
\begin{align*}
  |\cP F|^2 \lesssim \frac{\Theta(\eta)}{r^2} \|\psi \|^2 \quad
  \text{ and thus } \quad  \left\|  \cP ^\bot F
  \right\|_{-\beta}^2 \lesssim \frac{\Theta(\eta)}{r} \|\psi
  \|^2 + \|\psi \|^2 \lesssim \| \psi\|^2.
\end{align*}  
Plugging these estimates into~\eqref{eq:int-G} yields
\begin{equation*}  
  \left\| {\sf\Pi}_{r,\eta} \left[ \wdot^{-\frac{\beta}{2}}
      \cP^\bot \left( \wdot^{\frac{\beta}{2}} \psi
      \right) \right] \right\|
  \leq r \Vert F \Vert_{-\beta} \lesssim r |\cP F|
  + r  \| \cP^\bot F \|_{-\beta} \lesssim \Theta(\eta)^{\frac12} \|
  \psi \| + r \| \psi \|.
\end{equation*}
 
We now come to the estimate of
\begin{align*}
  {\sf\Pi}_{r,0}\left[ \wdot^{-\frac{\beta}{2}}
  \cP^\bot \left( \wdot^{\frac{\beta}{2}} \psi \right) \right]
  &= \frac{1}{2i\pi}
    \int_{S(0,r)} \left[ \tilde{L}_{0} - z  \right]^{-1}
    \left[ \wdot^{-\frac{\beta}{2}}
    \cP^\bot \left( \wdot^{\frac{\beta}{2}} \psi
    \right) \right] \dd z\\
  &= \frac{1}{2i\pi}
    \int_{S(0,r)} \wv^{-\frac{\beta}{2}} F \dd z 
\end{align*}
where $F$ satisfies this time
\begin{equation}\label{eq:G0}
  - L^* F - z \wv^{-\beta} F = \wv^{- \beta} \cP^\bot \left(
    \wdot^{\frac{\beta}{2}} \psi \right).
\end{equation}
Integrating this equation against $\cM$ implies $\cP F=0$ since
$\langle L^* F,1 \rangle =\cP[\cP^\bot(\wdot^{\frac{\beta}{2}}
\psi)]=0$ and $z \not =0$. Hypothesis~\ref{hyp:coercivity} then
implies, since $r <r_0<\lambda$ is away from $\lambda$:
\begin{align*}
  \left\| F \right\|_{-\beta} =
  \left\| \cP^\bot F \right\|_{-\beta} \lesssim \| \psi \|,
\end{align*}
and thus
\begin{align*}
  \left\| {\sf\Pi}_{r,0} \left[ \wdot^{-\frac{\beta}{2}}
  \cP^\bot \left( \wdot^{\frac{\beta}{2}} \psi \right) \right]
  \right\| \lesssim r \| \psi \|.
\end{align*}

The conclusion is that for any $\psi \in L^2(\cM)$,
\begin{equation*}
  \left\| \left({\sf\Pi}_{r,\eta}- {\sf\Pi}_{r,0}\right)
    \psi \right\|
  \lesssim \Theta(\eta)^{\frac12} \|
  \psi \| + r \| \psi \|
\end{equation*}
which implies (combining all the previous conditions), for
$r \in (\textsc{r}_1 \Theta(\eta),r_0)$ with $r_0>0$ \emph{small}
enough \emph{independently} of $\eta$ and $\textsc{r}_1$
\emph{large} enough \emph{independently} of $\eta$ and $\eta$
\emph{small} enough,
\begin{equation*}
  \left\| {\sf\Pi}_{r,\eta}- {\sf\Pi}_{r,0} \right\|_{L^2(\cM) \to
    L^2(\cM)} < 1. 
\end{equation*}

It implies that, for $r \in (\textsc{r}_1 \Theta(\eta),r_0)$ and
$\eta$ small enough, the projections ${\sf\Pi}_{r,\eta}$ and
${\sf\Pi}_{r,0}$ both exist thanks to
Subsection~\ref{subsec:resolvent} and their dimensions are the
same, i.e. $1$, which proves existence and uniqueness of an
eigenvalue $\mu(\eta) \in B(0,r_0)$ with
$|\mu(\eta)| \in (\textsc{r}_0 \Theta(\eta),\textsc{r}_1
\Theta(\eta))$. This implies that this eigenvalue is real: if
$(\psi_{\eta},-\mu(\eta))$ is an eigenpair of $\tilde{L}^* _\eta$
with $\mu(\eta) \in B(0,r_0)$, then so is
$(\overline{\psi_{\eta}(- \cdot)},-\overline{\mu(\eta)})$. Since
$\tilde L^* _\eta \le 0$ and $0$ is not an eigenvalue for
$\eta \not=0$, this proves that $\mu(\eta) >0$.

\subsection{Estimate on the branch as $\eta \to 0$}

Denote $\phi_\eta := \wdot^{\frac{\beta}{2}} \psi_\eta$ and
normalize 
\begin{align*}
  \int_{\R^d} \psi_\eta(v) \wv^{-\frac{\beta}{2}} \cM(v) \dd v =
  \int_{\R^d} \phi_\eta(v) \wv^{-\beta} \cM(v) \dd v =
  \int_{\R^d} \phi_\eta(v) \cM_\beta(v) \dd v = 
  m\left[\phi_\eta \right] =1.
\end{align*}
Then integrating the equation against $\overline{\psi_\eta}\cM$,
taking the real part and using Hypothesis~\ref{hyp:coercivity} yields
\begin{align*}
  \lambda \left\| \psi_\eta - \psi_0 \right\|^2
  \le \mu(\eta) \left\|
  \psi_\eta \right\|^2 \lesssim \mu(\eta) \left\|
  \psi_\eta - \psi_0 \right\|^2 + \mu(\eta)
\end{align*}
where we have used $\| \psi_0 \|=1$. Hence for $\eta$ small
enough we deduce
\begin{align*}
  \left\| \phi_\eta - 1 \right\|_{-\beta} =
  \left\| \psi_\eta - \psi_0
  \right\| \lesssim \mu(\eta)^{\frac12}.
\end{align*}
This concludes the proof of Lemma~\ref{lem:existencespectral}.

\section{Proof of Lemma~\ref{lem:ratespectral}
  (scaling of the eigenvalue)} \label{sec:ratespectral}

In this section we prove Lemma~\ref{lem:ratespectral}, assuming all Hypotheses~\ref{hyp:functional}--\ref{hyp:coercivity}--\ref{hyp:large-v}--\ref{hyp:scalinginfinite}. Consider the unique eigenpair $(\phi_{\eta},\mu(\eta))$ that satisfies $\mu(\eta) \in B(0,r_1)$ and
\begin{equation}
  \label{eq:unweighted}
  - L^* \phi_{\eta} -i \eta (v \cdot \sigma) \phi_{\eta}
  = \mu(\eta) \wv^{-\beta} \phi_{\eta} \quad
  \text{ and } \quad \int_{\R^d} \phi_{\eta}(v) \cM_\beta(v) \dd v =1.
\end{equation}

\subsection{Proof in the case $\alpha > 2+\beta$}

The function $F_\eta := \frac{\Im \phi_\eta}{\eta}$ satisfies
\begin{equation*}
  -L^*F_\eta - \mu(\eta) \wv^{-\beta} F_\eta
  = (v \cdot \sigma) \Re \phi_\eta \quad \text{ and } \quad
  \int_{\R^d} F_{\eta}(v) \cM_\beta \dd v = 0.
\end{equation*}

Since (by Hypothesis~\ref{hyp:coercivity}) $\tilde L^*$ is
invertible on the $L^2_v(\cM)$-orthogonal of
$\wdot^{-\frac{\beta}{2}}$, and
$v \mapsto \wv^{\frac{\beta}{2}} (v \cdot \sigma)$ belongs to
$L^2_v(\cM)$ when $\alpha >2+\beta$, we can then define a solution
$F \in L^2_v(\wdot^{-\beta} \cM)$ to
\begin{align*}
  - L^* F = (v \cdot \sigma)
  \quad \text{ with } \quad  \int_{\R^d}
  F(v) \cM_\beta \dd v = 0.
\end{align*}
The difference $F_\eta-F$ satisfies
\begin{equation*}
  -L^*\left( F_\eta - F \right) - \mu(\eta) \wv^{-\beta} \left(
    F_\eta - F \right) 
  = (v \cdot \sigma) \left[ \Re \phi_\eta -1 \right] + \mu(\eta)
  \wv^{-\beta} F.
\end{equation*}
Integrate this against $( F_\eta - F ) \cM$ and use Hypothesis~\ref{hyp:coercivity}:
\begin{align*}
  \left[ \lambda - \mu \left(\eta\right) \right]
  \left\| F_\eta -F \right\|_{-\beta}^2
  &\leq  \int_{\R^d} (v \cdot \sigma)
    \left( \Re \phi_\eta - 1 \right)
    \left(F_\eta - F\right) \cM \dd v 
    + \mu(\eta) \int_{\R^d} F \left(F_\eta - F\right) 
    \cM_\beta \dd v\\
  &\leq \Vert \Re \phi_\eta - 1 \Vert_{2+\beta}\Vert F_\eta -
    F\Vert_{-\beta}
    + \mu(\eta)  \Vert F \Vert_{-\beta}\Vert F_\eta -F\Vert_{-\beta}.
\end{align*}
Write for any $\ell\in (2+\beta,\alpha)$
\begin{align*}
  \left\| \Re \phi_\eta - 1 \right\|_{2+\beta}
  \leq \left\| \Re \phi_\eta - 1 \right\|_{-\beta}^{\zeta}
  \left\| \Re \phi_\eta - 1 \right\|_{\ell}^{1-\zeta}
  \leq \mu(\eta)^{\frac{\zeta}{2}}
  \left\| \Re \phi_\eta - 1 \right\|_{\ell}^{1-\zeta}
\end{align*}
with $\mathsf{z} = \frac{\ell-(2+\beta)}{\ell+\beta}\in (0,1)$,
then Hypothesis~\ref{hyp:scalinginfinite}-(i) implies
\begin{align*}
  \| \Re \phi_\eta - 1 \|_{2+\beta} \lesssim
  \mu(\eta)^{\frac{\mathsf{z}}{2}} \to 0
  \quad \text{ as } \quad \eta \to 0,
\end{align*}
and thus, since $\alpha >1$ (combining $\alpha>2+\beta$ and
$\alpha+\beta>0$) and 
\begin{align*}
  \| F \|_{-\beta} \lesssim \left\| (v \cdot \sigma) 
  \right\| \lesssim 1,
\end{align*}
we deduce 
\begin{align*}
  \left\| F_\eta -F \right\|_{-\beta} \lesssim
  \mu(\eta)^{\frac{\mathsf{z}}{2}} \to 0 
  \quad \text{ as } \quad \eta \to 0. 
\end{align*}

Finally,
\begin{align*}
  \left\vert \frac{\mu(\eta)}{\eta^2} -
  \int_{\R^d} (v\cdot \sigma) F(v) \cM(v) \dd v \right\vert
  \leq \left\vert \int_{\R^d} (v\cdot \sigma) (F_\eta (v) - F(v))
  \cM(v)
  \dd v \right\vert \\
  \lesssim  \Vert 1 \Vert_{2+\beta} \Vert F_\eta -F\Vert_{-\beta}
  \lesssim  \mu(\eta)^{\frac{\mathsf{z}}{2}} \to 0 \text{ as }
  \eta \to 0, 
\end{align*}
which identifies the limit of $\mu(\eta)$ and provides a rate.

\subsection{Proof in the case $\alpha \in (-\beta, 2+\beta)$}

Take $0 \le \chi \le 1$ smooth that is $1$ on $B(0,R_0)$ and $0$  outside $B(0,2R_0)$. Integrate \eqref{eq:unweighted} against $\Theta(\eta)^{-1} \chi(\cdot\eta^{\frac{1}{1+\beta}}) \cM$ and take the real part:
\begin{align}
  \label{eq:mu-chi}
  &\frac{\mu(\eta)}{\Theta(\eta)} + \frac{1}{\Theta(\eta)} \eta
    \left\langle  (v \cdot \sigma) \Im\phi_\eta ,
    \chi\left(\cdot\eta^{\frac{1}{1+\beta}}\right) \right\rangle
  \\ \nonumber
  &= - \frac{\mu(\eta)}{\Theta(\eta)}
    \left( \left\langle \wv^{-\beta} \Re\phi_\eta,
    \chi\left(\cdot\eta^{\frac{1}{1+\beta}}\right) \right\rangle
    - 1 \right) -
    \frac{1}{\Theta(\eta)} \left\langle L^*(\Re\phi_\eta-1),
    \chi\left(\cdot\eta^{\frac{1}{1+\beta}}\right) \right\rangle
  \\ \nonumber
  &= - \frac{\mu(\eta)}{\Theta(\eta)}
    \left\langle \wv^{-\beta} \Re\phi_\eta,
    \chi\left(\cdot\eta^{\frac{1}{1+\beta}}\right) -1 \right\rangle
    -\frac{1}{\Theta(\eta)} \left\langle \Re\phi_\eta-1,
    L\left( \chi\left(\cdot\eta^{\frac{1}{1+\beta}}\right)
    \right)\right\rangle. 
\end{align}
The first term in the right hand side is controlled by  
\begin{align*}
  \left\vert \frac{\mu(\eta)}{\Theta(\eta)}
  \left\langle \wv^{-\beta} \Re\phi_\eta,
  \chi\left(\cdot\eta^{\frac{1}{1+\beta}}\right) -1 \right\rangle
  \right\vert
  \lesssim  \left\vert  \left\langle \wv^{-\beta} \Re\phi_\eta,
  \chi\left(\cdot\eta^{\frac{1}{1+\beta}}\right) -1 \right\rangle
  \right\vert
  \lesssim R_0 ^{-\frac{\alpha+\beta}{2(1+\beta)}}
  \eta^{\frac{\alpha+\beta}{2(1+\beta)}}
\end{align*}
and the second term is controlled by
\begin{align*}
  \left\vert\frac{1}{\Theta(\eta)} \left\langle \Re\phi_\eta-1,
  L\left( \chi(\cdot\eta^{\frac{1}{1+\beta}})
  \right)\right\rangle \right\vert
  &\leq \frac{1}{\Theta(\eta)}
    \left\| \phi_\eta - 1 \right\|_{-\beta} \left\|
    L\left[ \chi\left( \cdot \eta^{\frac{1}{1+\beta}}
    \right) \right] \right\|_\beta\\
  &\lesssim \Theta(\eta)^{-\frac12} \left\|
    L\left[ \chi\left( \cdot \eta^{\frac{1}{1+\beta}}
    \right) \right] \right\|_\beta \\
  &\lesssim \Theta(\eta)^{-\frac12}
    \eta^{\frac{\alpha+\beta}{2(1+\beta)}}
    R_0 ^{-\frac{\alpha+\beta}{2(1+\beta)}} \lesssim
    R_0 ^{-\frac{\alpha+\beta}{2(1+\beta)}}.
\end{align*}
The second term in the left hand side satisfies (changing
variable to $u=v \eta^{\frac{1}{1+\beta}}$)
\begin{align*}
\frac{\eta}{\Theta(\eta)} 
  \left\langle  (v \cdot \sigma) \Im\phi_\eta ,
  \chi(\cdot\eta^{\frac{1}{1+\beta}}) \right\rangle
  = c_{\alpha,\beta}\int_{\R^d} (u \cdot \sigma)
  \Im\Phi_\eta(u)  |u|_\eta ^{-d-\alpha}  \chi(u) \dd u
\end{align*}
and we deduce 
\begin{align*}
  \left\vert \frac{\mu(\eta)}{\Theta(\eta)}
  + c_{\alpha,\beta}\int_{\R^d} (u \cdot \sigma)  \Im\Phi_\eta(u)
  |u|_\eta ^{-d-\alpha}  \chi(u) \dd u \right\vert
  \lesssim R_0^{-\frac{\alpha+\beta}{2}}.
\end{align*}
Then observe that assumption~\eqref{eq:mmt-fract} in
Hypothesis~\ref{hyp:scalinginfinite}-(ii) implies the uniform
integrability of the integrand on the support of $\chi$ and the
convergence of the integral as $\eta \to 0$ for a given $\chi$.

All in all we have the double limit
\begin{align*}
  \int_{\R^d} (u \cdot \sigma)  \Im\Phi_\eta(u)
  |u|_\eta ^{-d-\alpha}  \chi(u)
  \dd u \xrightarrow[R_0 \to \infty]{\eta \to
  0}  \int_{\R^d} (u \cdot \sigma)  \Im\Phi(u)
  |u|^{-d-\alpha}  \dd u.
\end{align*}
This double limit thus proves that
$\frac{\mu(\eta)}{\Theta(\eta)}$ converges and
\begin{equation*}  
  \lim_{\eta \to 0} \frac{\mu(\eta)}{\Theta(\eta)}
  = c_{\alpha,\beta} \int_{\R^d} (u \cdot \sigma)  \Im\Phi(u)
  |u|^{-d-\alpha} \dd u. 
\end{equation*}
This limit then belongs to $(\textsc{r}_0,\textsc{r}_1)$
because of estimates on $\mu(\eta)$ already established.

\subsection{Proof in the case $\alpha = 2+\beta$}

Take $0 \le \chi \le 1$ smooth that is $1$ on $B(0,1)$ and $0$ outside $B(0,2)$. Consider again~\eqref{eq:mu-chi} (with now $\Theta(\eta)=\eta^2|\ln \eta|$) and estimate
\begin{align*}
  \left\vert \frac{\mu(\eta)}{\Theta(\eta)} 
  \left\langle \wv^{-\beta} \Re\phi_\eta,
  \chi\left(\cdot\eta^{\frac{1}{1+\beta}}\right ) -1
  \right\rangle \right\vert
  \lesssim  \left\vert  \left\langle \wv^{-\beta} \Re\phi_\eta,
  \chi\left(\cdot\eta^{\frac{1}{1+\beta}}\right) -1 \right\rangle
  \right\vert \lesssim \eta  
\end{align*}
and 
\begin{align*}
  \left\vert\frac{1}{\eta^2|\ln \eta|} \left\langle
  \Re\phi_\eta-1, L\left( \chi(\cdot\eta^{\frac{1}{1+\beta}})
  \right) \right\rangle \right\vert
  &\leq \frac{1}{\eta^2|\ln \eta|} 
    \left\| \phi_\eta - 1 \right\|_{-\beta} \left\|
    L\left[ \chi\left( \cdot \eta^{\frac{1}{1+\beta}}
    \right) \right] \right\|_\beta\\
  &\lesssim \frac{1}{\eta |\ln(\eta)|^{\frac12}} \left\|
    L\left[ \chi\left( \cdot \eta^{\frac{1}{1+\beta}}
    \right) \right] \right\|_\beta
    \lesssim \frac{1}{|\ln(\eta)|^{\frac12}}.  
\end{align*}
We have also
\begin{align*}
  \frac{1}{\eta |\ln \eta|}
  \left\langle  (v \cdot \sigma) \Im\phi_\eta ,
  \chi \left(\cdot\eta^{-\frac{1}{1+\beta}}\right) \right\rangle
  = \frac{c_{\alpha,\beta}}{|\ln \eta|} \int_{\R^d} (u \cdot
  \sigma) \Im\Phi_\eta  |u|_\eta ^{-d-\alpha}  \chi(u) \dd u
\end{align*}
which gives 
\begin{equation}
  \label{eq:musurtheta}
  \left\vert \frac{\mu(\eta)}{\Theta(\eta)} +
    \frac{c_{\alpha,\beta}}{|\ln(\eta)|}
    \int_{\R^d} (u \cdot \sigma)  \Im\Phi_\eta  |u|_\eta
    ^{-d-\alpha}  \chi(u) \dd u \right\vert \lesssim
  \frac{1}{|\ln(\eta)|^{\frac12}}    + \eta.
\end{equation}  
Let us decompose 
\begin{align*}
  & \frac{c_{\alpha,\beta}}{|\ln \eta|} \int_{\R^d} (u \cdot
    \sigma) \Im\Phi_\eta  |u|_\eta ^{-d-\alpha}  \chi(u) \dd u\\
  &= \frac{c_{\alpha,\beta}}{|\ln \eta|} \int_{\vert u \vert \leq
    \eta^{\frac{1}{1+\beta}}}
    (u \cdot \sigma)  \Im\Phi_\eta  |u|_\eta ^{-d-\alpha}
    \chi(u) \dd u
    + \frac{c_{\alpha,\beta}}{|\ln \eta|} \int_{\vert u \vert
    \geq \eta^{\frac{1}{1+\beta}}} (u \cdot \sigma)  \Im\Phi_\eta
    |u|_\eta ^{-d-\alpha}  \chi(u) \dd u.
\end{align*}
The first term is bounded by
\begin{align*}
  \frac{c_{\alpha,\beta}}{|\ln \eta|}\int_{\vert u \vert \leq 
  \eta^{\frac{1}{1+\beta}}} (u \cdot \sigma)  \Im\Phi_\eta(u)
  |u|_\eta ^{-d-\alpha}  \chi(u) \dd u
  &= \frac{c_{\alpha,\beta}}{\eta |\ln \eta|} \int_{\vert
    v \vert \leq 1}
    (v \cdot \sigma)  \Im\phi_\eta (v)  \cM(v) \dd v\\
  &\lesssim \frac{\Theta(\eta)^{\frac12}}{\eta |\ln \eta|}
    \lesssim
    \frac{1}{|\ln(\eta)|^{\frac12}}.
\end{align*}
We approximate, using
Hypothesis~\ref{hyp:scalinginfinite}-(ii)-(a), 
\begin{equation*}
  \frac{1}{|\ln(\eta)|} \left\vert 
    \int_{\vert u \vert \geq \eta^{\frac{1}{1+\beta}}} (u \cdot
    \sigma) \Big[ \Im\Phi_\eta(u) - \Im\Phi(u) \Big]
    |u|_\eta ^{-d-\alpha}  \chi(u) \dd u  \right\vert 
  \lesssim \mathfrak{a}(\eta).
\end{equation*}

Define 
\begin{equation*}
  N(\eta) := \int_{|u| \ge \eta^{\frac{1}{1+\beta}}} (u \cdot \sigma)
  \Im \Phi (u) |u|^{-d-\alpha} \chi(u) \dd u.
\end{equation*}  
Observe that since
$\left\vert \Im\Phi(u) \right\vert \lesssim |u|^{1+\beta}$, and
$\alpha = 2 + \beta$,
\begin{align*}
  \left| N(\eta) - \int_{\vert u \vert \geq \eta^{\frac{1}{1+\beta}}}
  (u \cdot \sigma)  \Im\Phi(u)  |u|_\eta ^{-d-\alpha}  \chi(u)
  \dd u \right|
  &\leq  \int_{2 \geq \vert u \vert \geq \eta^{\frac{1}{1+\beta}}}
    |u|^{2+\beta}  \left\vert  |u|_\eta ^{-d-\alpha}
    -  |u|^{-d-\alpha}  \right\vert \dd u \\
  &\leq  \int_{1 \leq \vert v \vert \leq 2\eta^{-\frac{1}{1+\beta}} }
    |v|^{-d } \left\vert  |v|^{d+\alpha} \wv^{-d-\alpha}
    - 1   \right\vert \dd v\\
  &\lesssim 1,
\end{align*} 
since
$\left\vert |v|^{d+\alpha} \wv^{-d-\alpha} - 1 \right\vert
\sim_{v \to \infty} \frac{d+\alpha}{2} \frac{1}{1+|v|^2}$.  We
get, using Hypothesis~\ref{hyp:scalinginfinite}-(ii)-(a),
\begin{align*}
  - \eta N'(\eta) \sim \frac{1}{1+\beta} \int_{\sigma'\in
  \mathbb{S}^{d-1}} \left( \sigma \cdot \sigma' \right)
  \frac{\Im \Phi\left(  
  \eta^{\frac{1}{1+\beta}} \sigma' \right)}{\eta}
  \dd \sigma' \sim
  \frac{1}{1+\beta}  \int_{\sigma'\in
  \mathbb{S}^{d-1}} \left( \sigma \cdot \sigma' \right)
  \Omega(\sigma') \dd \sigma'.
\end{align*}
Apply then L'Hôpital's rule to deduce
\begin{equation*}
  \lim_{\eta \to 0} \frac{N(\eta)}{|\ln
    \eta|}  = \frac{1}{1+\beta}  \int_{\sigma'\in
    \mathbb{S}^{d-1}} \left( \sigma \cdot \sigma' \right)
  \Omega(\sigma') d \sigma'.
\end{equation*}
We conclude by taking $\eta \to 0$ in \eqref{eq:musurtheta}.

\section{Proof of Lemma \ref{lem:diffcoeff} (the diffusion
  coefficient)} \label{sec:diffcoeff}
\label{sec:diff-coef}

We assume $\alpha \ge 0$. Lemma~\ref{lem:diffcoeff} follows from Lemma~\ref{lem:ratespectral}, the definition~\eqref{eq:scaling-function} of $\theta$, and the 
\begin{lemma}\label{lem:zeromoment}
  Assume  Hypotheses~\ref{hyp:functional}--\ref{hyp:coercivity}--\ref{hyp:large-v}--\ref{hyp:scalinginfinite}. Then, the convergence
  \begin{equation*}
    \left\langle 1, \phi_{\eta}\right\rangle \sim_{\eta
      \to 0}
    \left\lbrace  
      \begin{array}{ll}
        \ds
        \| \cM \|_{L^1(\R^d)}
        &\text{ when } \alpha > 0 \\[3.5mm]
        \ds
        \frac{|\mathbb{S}^{d-1}|}{1+\beta} \vert \ln(\eta)\vert
        &\text{ when } \alpha =0 % \\[3.5mm]
      \end{array}
    \right.
  \end{equation*}
  holds, with explicit convergence rate.
\end{lemma}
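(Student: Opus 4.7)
The overall strategy is to compute $\langle 1,\phi_\eta\rangle = \int_{\R^d}\phi_\eta(v)\cM(v)\,dv$ by exploiting the rescaled mode $\Phi_\eta(u) := \phi_\eta(\eta^{-1/(1+\beta)} u)$ from Hypothesis~\ref{hyp:scalinginfinite}. The change of variable $v=a^{-1}u$ with $a := \eta^{1/(1+\beta)}$, combined with $\cM(a^{-1}u) = c_{\alpha,\beta}\,a^{d+\alpha}|u|_\eta^{-(d+\alpha)}$, yields the key identity
\[
\langle 1,\phi_\eta\rangle = c_{\alpha,\beta}\, a^\alpha \int_{\R^d} \Phi_\eta(u)|u|_\eta^{-(d+\alpha)}\,du,
\]
which I would analyse in each $\alpha$-regime using the convergence $\Phi_\eta\to\Phi$ with $\Phi(0)=1$, the pointwise/integrability controls from Hypothesis~\ref{hyp:scalinginfinite}, and the asymptotic $\|\phi_\eta-1\|_{-\beta}\lesssim \mu(\eta)^{1/2}$ from Lemma~\ref{lem:existencespectral}.

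For $\alpha > 2+\beta$, I would bypass the rescaling and argue by interpolation directly on $\phi_\eta$. Hypothesis~\ref{hyp:scalinginfinite}-(i) gives $\|\phi_\eta\|_\ell\lesssim 1$ for any $\ell<\alpha$, and $\|1\|_\ell$ is finite for such $\ell$. When $\beta>0$, pick $\ell\in(0,\alpha)$ and apply Hölder:
\[
\|\phi_\eta-1\|_0 \le \|\phi_\eta-1\|_{-\beta}^{\ell/(\ell+\beta)}\|\phi_\eta-1\|_\ell^{\beta/(\ell+\beta)} \lesssim \mu(\eta)^{\ell/(2(\ell+\beta))};
\]
when $\beta\le 0$ the norm $\|\cdot\|_{-\beta}$ dominates $\|\cdot\|_0$ so the bound is trivial. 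In either case Cauchy--Schwarz against $1\in L^2(\cM)$ (finite since $\alpha>0$) yields $|\langle 1,\phi_\eta\rangle-\|\cM\|_{L^1}|\to 0$ with explicit polynomial rate in $\mu(\eta)$.

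For $\alpha\in(0,2+\beta]$, the prefactor $a^\alpha\to 0$ compensates the dominant scale of the integral. Substituting $w=u/a$ in the region $|u|\le 1$ and using the $L^2_{\text{loc}}$ convergence $\Phi_\eta\to\Phi$, $\Phi(0)=1$, with domination by \eqref{eq:mmt-fract}, gives
\[
c_{\alpha,\beta}\, a^\alpha \int_{|u|\le 1}\Phi_\eta(u)|u|_\eta^{-(d+\alpha)}\,du \xrightarrow[\eta\to 0]{} c_{\alpha,\beta}\int_{\R^d}(1+|w|^2)^{-(d+\alpha)/2}\,dw = \frac{c_{\alpha,\beta}}{c_{\alpha,0}} = \|\cM\|_{L^1}.
\]
The tail $|u|>1$ contributes at most $O(a^\alpha)=o(1)$: by \eqref{eq:mmt-fract} the integrand is bounded by $|u|^{-d-\alpha+C\mu(\eta)}$, integrable at infinity for $\eta$ small since $\alpha>0$. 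This establishes $\langle 1,\phi_\eta\rangle\to\|\cM\|_{L^1}$.

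The main obstacle lies in the case $\alpha=0$, where the prefactor $a^\alpha=1$ disappears and the integral develops a logarithmic divergence. I would split into $|u|\le a$ (bounded $O(1)$ since $|u|_\eta\sim a$), $a\le|u|\le 1$ (main term, using $\Phi_\eta\to 1$ in this region from $L^2_{\text{loc}}$ convergence, $\Phi(0)=1$, and \eqref{eq:mmt-fract}, giving $|\mathbb{S}^{d-1}|\ln(1/a)=\frac{|\mathbb{S}^{d-1}|}{1+\beta}|\ln\eta|$ at leading order), and $|u|\ge 1$ (tail). The tail is the critical point: the pointwise bound $|\Phi_\eta|\lesssim |u|^{C\mu(\eta)}$ alone is not enough to make $\Phi_\eta(u)|u|^{-d}$ integrable at infinity, and it is precisely the tightness assumption \eqref{eq:mmt-fract-bis} of Hypothesis~\ref{hyp:scalinginfinite}-(ii)-(b) (active for $\alpha\in(-\beta,\beta]$, in particular $\alpha=0$) that yields, via Cauchy--Schwarz,
\[
\int_{|u|\ge 1}|\Phi_\eta(u)||u|^{-d}\,du \le \Bigl(\int_{|u|\ge 1}|\Phi_\eta|^2 |u|^{-d+\beta}\,du\Bigr)^{1/2}\Bigl(\int_{|u|\ge 1}|u|^{-d-\beta}\,du\Bigr)^{1/2} \lesssim 1.
\]
Since $\beta>0$ forces both factors to be finite, this subdominant control combines with the three-region decomposition to yield the stated logarithmic asymptotic.
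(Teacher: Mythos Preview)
Your tail argument for $\alpha=0$ via Cauchy--Schwarz and \eqref{eq:mmt-fract-bis} is exactly the paper's, and your treatment of $\alpha>2+\beta$ matches as well. There is, however, a genuine gap in how you handle the ``inner'' regions for $\alpha\in(0,2+\beta]$ and for the middle region $a\le|u|\le1$ when $\alpha=0$.

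You repeatedly invoke ``$L^2_{\text{loc}}$ convergence $\Phi_\eta\to\Phi$, $\Phi(0)=1$'' to conclude that $\Phi_\eta(aw)\to1$ (equivalently $\phi_\eta(w)\to1$) or that $\Phi_\eta\approx 1$ on $a\le|u|\le1$. This does not follow: Hypothesis~\ref{hyp:scalinginfinite} only gives $\Phi_\eta\to\Phi$ in $L^2_{\text{loc}}(\R^d\setminus\{0\})$, i.e.\ \emph{away} from the origin, and $\Phi$ is not identically~$1$ there. On any fixed shell $\{|u|\in[\delta,1]\}$ you have $\Phi_\eta\to\Phi\neq1$, so the claimed approximation fails; the statement $\Phi(0)=1$ says nothing about $\Phi_\eta$ near $u=0$ uniformly in~$\eta$. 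Your dominated-convergence argument for $\alpha\in(0,2+\beta]$ therefore lacks the pointwise convergence ingredient, and it also does not deliver the ``explicit convergence rate'' required by the lemma.

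The paper avoids rescaling entirely for $\alpha>0$ and uses a single interpolation step: writing $|\langle 1,\phi_\eta-1\rangle|\le\|1\|_{\min(\ell,\beta)}\|\phi_\eta-1\|_{-\min(\ell,\beta)}$ for some $\ell\in(0,\alpha)$, then interpolating between $\|\phi_\eta-1\|_{-\beta}\lesssim\mu(\eta)^{1/2}$ and $\|\phi_\eta-1\|_0\lesssim1$ (the latter following from the pointwise bound in \eqref{eq:mmt-fract} when $\alpha\le2+\beta$, and from Hypothesis~\ref{hyp:scalinginfinite}-(i) when $\alpha>2+\beta$). This gives $|\langle1,\phi_\eta\rangle-\|\cM\|_{L^1}|\lesssim\mu(\eta)^{a/2}$ for an explicit $a\in(0,1]$, uniformly for all $\alpha>0$. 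For $\alpha=0$, the paper likewise handles the inner region $|v|\le\eta^{-1/(1+\beta)}$ by splitting off $\int\cM$ and bounding $\int(\phi_\eta-1)\cM$ via Cauchy--Schwarz against $\|\phi_\eta-1\|_{-\beta}$, giving an $O(1)$ error---not via any convergence of $\Phi_\eta$. Replacing your $L^2_{\text{loc}}$ step with this quantitative use of $\|\phi_\eta-1\|_{-\beta}$ closes the gap and recovers the explicit rates.
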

\begin{proof}
  When $\alpha >0$, the integral
  \begin{align*}
    \frac{c_{\alpha,\beta}}{c_{\alpha,0}} = \left\langle 1, 1
    \right\rangle
    = \int_{\R^d} \cM \dd v < +\infty
  \end{align*}
  is well defined and, choosing $\ell \in (0,\alpha)$,
  \begin{align*}
    \left| \left\langle 1, \phi_{\eta} \right\rangle -
    \left\langle 1, 1 \right\rangle \right|
    &\leq \left| \left\langle 1, \phi_{\eta} - 1 \right\rangle
      \right| \leq
      \left\| 1 \right\|_{\min(\ell,\beta)}
      \left\| \phi_{\eta} - 1 \right\|_{-\min(\ell,\beta)} \\
    & \lesssim \left\| \phi_{\eta} - 1 \right\|_{-\beta}^a \left\|
      \phi_{\eta} - 1 \right\|_0 ^{1-a} \lesssim
      \mu(\eta)^{\frac{a}{2}}
  \end{align*}
  with $a = \min(\frac{\ell}{\beta},1) \in (0,1]$, which shows
  (with explicit rate)
  \begin{equation*}
    \left\langle 1, \phi_{\eta}\right\rangle \xrightarrow[]{\eta
    \to 0} \langle 1, 1 \rangle = 
    \frac{c_{\alpha,\beta}}{c_{\alpha,0}} = \| \cM
    \|_{L^1(\R^d)} \quad \text{ when } \alpha >0.
  \end{equation*}

In the case $\alpha=0$,
\begin{align*}
  \int_{\R^d} \phi_\eta(v) \cM(v) \dd v =
  \int_{|v| \le \eta^{-\frac{1}{1+\beta}}} \phi_\eta(v)
  \cM(v) \dd v +
  \int_{|v|\ge \eta^{-\frac{1}{1+\beta}}} \phi_\eta(v) \cM(v) \dd v.  
\end{align*}
The second term is estimated by
\begin{align*}
  \int_{|v|\ge \eta^{-\frac{1}{1+\beta}}} \phi_\eta(v) \cM(v) \dd
  v
  &= c_{0,\beta} \int_{|u|\ge 1} \Phi_\eta(u) |u|_\eta ^{-d} \dd u\\
  &= c_{0,\beta} \left( \int_{|u|\ge 1} \vert \Phi_\eta(u)\vert^2
    |u|_\eta ^{-d+\beta} \dd u \right)^\frac12 \left(
    \int_{|u|\ge 1} |u|_\eta ^{-d-\beta} \dd u \right)^\frac12
  \lesssim 1,
\end{align*}
using the moment bounds~\eqref{eq:mmt-fract-bis}. The first term
is decomposed into
\begin{equation*}
  \int_{|v| \le \eta^{-\frac{1}{1+\beta}}} \phi_\eta(v) \cM(v)
  \dd v
  =  \int_{|v| \le \eta^{-\frac{1}{1+\beta}}}
    \left(\phi_\eta(v) - 1\right) \cM(v) \dd v
    + \int_{|v| \le \eta^{-\frac{1}{1+\beta}}} \cM(v) \dd v.
\end{equation*}
Since 
\begin{equation*}
  \left\vert \int_{|v| \le \eta^{-\frac{1}{1+\beta}}}
  \left(\phi_\eta(v) - 1\right) \cM(v) \dd v \right\vert
   \leq \left\| \phi_\eta(v) - 1 \right\|_{-\beta} \left\|
    {\bf 1}_{|\cdot| \le \eta^{-\frac{1}{1+\beta}}} \right\|_\beta
   \lesssim \mu(\eta)^\frac12 \eta^{-\frac{\beta}{2(1+\beta)}}
    \lesssim 1,
\end{equation*}
we deduce
\begin{align*}
  \int_{\R^d} \phi_\eta(v) \cM(v) \dd v \sim
  \int_{|v| \le \eta^{-\frac{1}{1+\beta}}} \cM(v) \dd v
  \sim c_{0,\beta} \frac{\left|\mathbb{S}^{d-1}\right|}{1+\beta}
  |\ln \eta| 
\end{align*}
with explicit error term. 
\end{proof}

%%%%%%%%%%%%%%%%%%%%%%%%%%%%%%%%%%%%%%%%%%%%%%%%%%%%%%%%%%%%%%
\section{Proof of the hypotheses for scattering
  equations}\label{sec:scatt}

Let us consider $\alpha \ge 0$ and the scattering operator,
written both on ``$f$'' or ``$h=\frac{f}{\cM}$'':
\begin{equation*}
  \begin{cases}
    \ds \mathcal{L} f(v) = \int_{\R^d} b(v,v')\,\left[ f(v')\,
      \cM(v)-f(v)\, \cM(v')\right] \dd v', \\[3mm] \ds
    Lh(v) = \int_{\R^d} b(v,v')\,\cM(v') \left[
      h(v')\,-h(v) \right] \dd v'.
  \end{cases}
\end{equation*}
We assume that $b$ is $C^1$, that the operator conserves
the local mass
\begin{align*}
  \int_{\R^d}\big[\mathrm b(v,v')
  -\mathrm b(v',v)\big]\,\cM(v')\dd  v'=0
\end{align*}
and that the \emph{collision kernel} $b$ and \emph{collision
  frequency} $\nu(v) := \int_{\R^d} b(v,v') \, \cM(v') \dd v'$
satisfy, for some constant $\nu_0 >0$,
\begin{align*}
  \wv^{-\beta} \lesssim \nu(v) \lesssim
  \wv^{-\beta}, \qquad
  \lambda^\beta \nu (\lambda u) \sim_{\lambda \to \infty} \nu_0
  |u|^{-\beta} \quad \text{ and } \quad 
  \Vert b(v,\cdot) \Vert_\beta + \Vert b(\cdot,v) \Vert_\beta
  \lesssim \wv^{-\beta}.
\end{align*}
This includes $b(v,v') = \wv^{-\beta}\wvp^{-\beta}$ for any
$\alpha + \beta >0$, $b(v,v') = \lfloor v -v' \rceil^{-\beta}$
when $\beta \ge 0$ and $\alpha > 3 \beta$, and even
$b(v,v') = \vert v -v' \vert^{-\beta}$ when $\beta <0$ and
$\alpha + \beta >0$.

\subsection{Proof of Hypothesis~\ref{hyp:coercivity}}

Hypothesis~\ref{hyp:coercivity} is standard and proved for
instance in~\cite{MR1803225}.

\subsection{Proof of Hypothesis~\ref{hyp:large-v}}

We perform the following calculations (the case of $\tilde \chi_R$ is similar):
\begin{align*}
  & \Vert L(\chi_R)\Vert_\beta^2
  = \int_{\R^d} \wv^\beta \vert L( \chi_R) \vert^2 \cM(v) \dd v \\
  &\leq \int_{\R^d} \wv^\beta\nu(v) \int_{\R^d} \vert  \chi_R(v)
    - \chi_R(v') \vert^2 \, \mathrm b(v,v')\, \cM(v') \,
    \cM(v) \dd v' \dd v \\
  &\lesssim
    \iint_{\R^d \times \R^d} \vert  \chi_R(v)  - \chi_R(v')
    \vert^2 \, \mathrm b(v,v') \, \cM(v) \, \cM(v') \dd v \dd v'\\
  &\lesssim \iint_{\{ \vert v \vert < R \} \times \R^d} \vert  \chi_R(v)  - \chi_R(v')
    \vert^2 \, \mathrm b(v,v') \, \cM(v) \, \cM(v') \dd v \dd v' \\
  & \qquad + \iint_{\{ \vert v \vert> R \}\times \R^d} \vert  \chi_R(v)  - \chi_R(v')
    \vert^2 \, \mathrm b(v,v') \, \cM(v) \, \cM(v') \dd v \dd v' \\
  &\lesssim \iint_{\R^d \times \{ \vert v' \vert> R \}}
    \mathrm b(v,v') \, \cM(v) \, \cM(v') \dd v \dd v' 
    +\iint_{\{ \vert v \vert> R \} \times \R^d}
    \mathrm b(v,v') \, \cM(v) \, \cM(v') \, \dd v' \dd v\\
  &\lesssim \left\| \chi_R ^c \cM \right\|_{-\beta} \lesssim
    R^{-\frac{(\alpha+\beta)}{2}}  \xrightarrow[R \to \infty]{} 0.
\end{align*}

\subsection{Proof of Hypothesis~\ref{hyp:scalinginfinite}}

The eigenvalue problem can be written 
\begin{align*}
   \left( L^{*,+}\phi_\eta \right) (v) := \int_{\R^d} b(v',v)
    \cM(v')\phi_\eta(v') \dd v' = \left(\nu(v) - i \eta  (v \cdot
    \sigma) -\mu(\eta)\wv^{-\beta} \right) \phi_\eta(v) 
\end{align*}
with the normalization
$\int_{\R^d} \cM_\beta(v')\phi_\eta(v') \dd v' = 1$. Observe
first that Hypothesis~\ref{hyp:coercivity} implies
\begin{align*}
  \left\| \phi_\eta - 1 \right\|_{-\beta}^2 \le \mu(\eta) \left\|
  \phi_\eta \right\|_{-\beta}^2
\end{align*}
and thus, for $\eta$ small enough
\begin{align*}
  \left\| \phi_\eta \right\|_{-\beta}^2 \le \frac{\lambda}{\lambda -
  \mu(\eta)}
\end{align*}
is uniformly bounded as $\eta \to 0$. Observe second that
\begin{equation*}
  \left\vert L^{*,+} (\phi_\eta)(v)  \right\vert \leq
  \left\|b(\cdot,v) \right\|_\beta
  \left\| \phi_\eta \right\|_{-\beta} \lesssim \wv^{-\beta}
 \end{equation*}
which yields, for $\eta$ small enough,  
\begin{equation*}
  \vert \phi_\eta(v) \vert \lesssim
  \frac{\wv^{-\beta}}{\left[
      \left(\nu(v)-\mu(\eta)\wv^{-\beta}\right)^2  + \eta^2  (v
      \cdot \sigma)^2 \right]^\frac12} \lesssim
  \frac{1}{ \wv^{\beta} \nu(v)-\mu(\eta)} \lesssim 1,
\end{equation*}
i.e. $\phi_\eta$ is uniformly bounded in $L^\infty(\R^d)$ as
$\eta \to 0$, and Hypothesis~\ref{hyp:scalinginfinite}-(i) when
$\alpha > 2 + \beta$ follows.

The rescaled eigenvector $\Phi_\eta$ satisfies 
\begin{equation*}
  \Phi_\eta(u) :=\phi_\eta\left(\eta^{-\frac{1}{1+\beta}} u
  \right) =
  \frac{\eta^{\frac{\beta}{1+\beta}}
    L^{*,+}\phi_\eta \left(\eta^{-\frac{1}{1+\beta}}
      u\right)}{\eta^{\frac{\beta}{1+\beta}}
    \nu\left(\eta^{-\frac{1}{1+\beta}}u \right) - i (u \cdot
    \sigma) -\mu(\eta) \vert u \vert_\eta^{-\beta}}.
\end{equation*}

We turn to the case $\alpha \le
2+\beta$. Estimate~\eqref{eq:mmt-fract} in
Hypothesis~\ref{hyp:scalinginfinite}-(ii) follows from
$\Phi_{\eta}$ begin uniformly bounded and for $\eta$ small and
$|u| \le 1$ (using
$|L^{*,+} (\phi_\eta)(v)| \lesssim \wv^{-\beta}$),
\begin{equation*}
  \left\vert \Im \Phi_{\eta}(u) \right\vert
  = \left\vert \frac{ (u \cdot \sigma)
  \left[\eta^{\frac{\beta}{1+\beta}} L^{*,+} \phi_\eta \left(
  \eta^{-\frac{1}{1+\beta}} u \right) \right]
  }{\left(\eta^{\frac{\beta}{1+\beta}} \nu \left(
  \eta^{-\frac{1}{1+\beta}} u \right) -\mu(\eta) \vert u
  \vert_\eta^{-\beta}\right)^2 + (u \cdot \sigma)^2} \right\vert
  \lesssim \frac{\vert u \cdot \sigma
  \vert}{\eta^{\frac{\beta}{1+\beta}}
  \nu\left(\eta^{-\frac{1}{1+\beta}} u\right)}
  \lesssim \vert u \vert_\eta^{1+\beta}.
\end{equation*}

When $\alpha \le \beta$, the integral moment
bound~\eqref{eq:mmt-fract-bis} in
Hypothesis~\ref{hyp:scalinginfinite}-(ii)-(b) follows from (for
small $\eta$ and large $u$ and using again
$|L^{*,+} (\phi_\eta)(v)| \lesssim \wv^{-\beta}$)
\begin{align*}
  \left| \Phi_\eta (u) \right| \lesssim \frac{1}{1+|u|^\beta|u
  \cdot \sigma|}
\end{align*}
which implies 
\begin{align*}
  \left\| \Phi_\eta \right\|_\beta ^2 \lesssim
  \int_0  ^\pi \int_1
  ^{+\infty} \frac{r^{-1-\alpha+\beta}}{1+r^{2+2\beta} \cos
  \theta^2} \dd r \dd \theta < +\infty.
\end{align*}

To prove the remaining points we use $L^* 1 = 0$ to write
\begin{align*}
  \Phi_\eta(u) - \frac{\eta^{\frac{\beta}{1+\beta}} \nu \left(
  \eta^{-\frac{1}{1+\beta}} u
  \right)}{\eta^{\frac{\beta}{1+\beta}} \nu \left(
  \eta^{-\frac{1}{1+\beta}}u \right) - i (u \cdot \sigma)
  -\mu(\eta) \vert u \vert_\eta^{-\beta}}
  &= \frac{\eta^{\frac{\beta}{1+\beta}} L^{*,+} \phi_\eta
    \left(\eta^{-\frac{1}{1+\beta}} u \right)-
    \eta^{\frac{\beta}{1+\beta}} \nu \left(
    \eta^{-\frac{1}{1+\beta}} u
    \right)}{\eta^{\frac{\beta}{1+\beta}} \nu \left(
    \eta^{-\frac{1}{1+\beta}} u \right) - i (u \cdot \sigma) -
    \mu(\eta) \vert u \vert_\eta^{-\beta}}\\
  &= \frac{\eta^{\frac{\beta}{1+\beta}}
    L^{*,+}\left(\phi_\eta-1\right)\left(\eta^{-\frac{1}{1+\beta}} u
    \right)}{\eta^{\frac{\beta}{1+\beta}} \nu \left(
    \eta^{-\frac{1}{1+\beta}}u \right) - i (u \cdot \sigma)
    -\mu(\eta) \vert u \vert_\eta^{-\beta}}.
\end{align*}
Since then 
\begin{equation*}
  \left\vert \eta^{\frac{\beta}{1+\beta}}
    L^{*,+}\left(\phi_\eta-1\right) \left(\eta^{-\frac{1}{1+\beta}}u
      \right) \right\vert \lesssim \eta^{\frac{\beta}{1+\beta}}
    \left\lfloor \eta^{-\frac{1}{1+\beta}} u \right\rceil^{-\beta} \Vert
    \phi_\eta-1 \Vert_{-\beta} \lesssim
    \sqrt{\mu(\eta)} \eta^{\frac{\beta}{1+\beta}}  \nu
    \left(\eta^{-\frac{1}{1+\beta}} u \right),
\end{equation*}
we deduce
\begin{align*}
  \left\| \frac{\Phi_\eta}{\Phi_{\eta,0}} - 1\right\|_{L^\infty(\R^d)}
  \lesssim \sqrt{\mu(\eta)}  \xrightarrow[\eta \to 0]{} 0
\end{align*}
with the simpler function 
\begin{equation*}
  \Phi_{\eta,0}(u) :=
  \frac{\eta^{\frac{\beta}{1+\beta}} \nu
    \left(\eta^{-\frac{1}{1+\beta}}
      u \right)}{\eta^{\frac{\beta}{1+\beta}}
      \nu\left(\eta^{-\frac{1}{1+\beta}}u \right) - i (u \cdot
      \sigma) -\mu(\eta) \vert u \vert_\eta^{-\beta}}.
\end{equation*}
To prove the convergence of $\Phi_\eta$ it is thus enough to
check the convergence of $\Phi_{\eta,0}$:
\begin{equation*}
  \lim_{\eta \to 0} \Phi_{\eta}(u) = \lim_{\eta \to 0} \Phi_{\eta,0}(u)
  = \frac{\nu_0}{\nu_0 - i \vert u \vert^{\beta} (u \cdot \sigma)}
  =: \Phi(u)
\end{equation*}
and in the case $\alpha=2+\beta$ we also have
\begin{equation*}
  \Omega(u) = \lim_{\lambda \to
  0, \ \lambda \not =0} \lambda^{-(1+\beta)}
  \frac{\nu_0 \vert \lambda u \vert^{\beta}
  ( \lambda u \cdot \sigma)}{\nu_0^2 + \vert
  \lambda u \vert^{2\beta} ( \lambda u \cdot \sigma)^2}
  = \nu_0^{-1} \vert u \vert^{\beta} (u \cdot \sigma),
\end{equation*}
and the corresponding diffusion coefficients are given in the
statement of Corollary~\ref{cor:scat}.

Moreover, since
\begin{equation*}
\Im \Phi_{\eta,0}(u) :=
\frac{ \vert u \vert_\eta^{\beta} \nu_\eta (u)
  \vert u \vert_\eta^{\beta} (u \cdot
  \sigma)}{\left( \vert u \vert_\eta^{\beta} \nu_\eta (u)  -
    \mu(\eta) \right)^2 + \vert u \vert_\eta^{2\beta}(u \cdot
      \sigma)^2},
\end{equation*}
and
\begin{equation*}
\Im \Phi(u) :=
  \frac{ \nu_0 \vert u \vert^{\beta}  (u \cdot
      \sigma)}{ \nu_0^2  + \vert u \vert^{2\beta}(u \cdot
      \sigma)^2},
\end{equation*}
we deduce
\begin{equation*}
  \frac{\Im \Phi (u)}{ \Im \Phi_{\eta,0} (u)} 
  = \frac{ \nu_0 }{\vert u \vert_\eta^{\beta} \nu_\eta (u)}
    \frac{\vert u \vert^{\beta}}{\vert u \vert_\eta^{\beta}}     
    \frac{\left( \vert u \vert_\eta^{\beta} \nu_\eta (u)
    - \mu(\eta) \right)^2 + \vert u \vert_\eta^{2\beta}(u \cdot
    \sigma)^2}{  \nu_0^2  + \vert u \vert^{2\beta}(u \cdot
    \sigma)^2} 
  = (1+ o(1))\frac{\vert u \vert^{\beta}}
    {\vert u \vert_\eta^{\beta}}.
\end{equation*}
Since
$\left|\Im \Phi_{\eta}-\Im \Phi_{\eta,0}\right| \leq
\sqrt{\mu(\eta)} \left|\Im \Phi_{\eta,0}\right|$,
\begin{equation*}
  \frac{\Im \Phi (u)}{ \Im \Phi_{\eta} (u)} 
  =\frac{\Im \Phi (u)}{ \Im \Phi_{\eta,0} (u)}
  \frac{ \Im \Phi_{\eta,0} (u)}{ \Im \Phi(u)}
  = (1+ o(1))\frac{\vert u \vert^{\beta}}
  {\vert u \vert_\eta^{\beta}}
\end{equation*}
which completes the proof of
Hypothesis~\ref{hyp:scalinginfinite}-(ii)-(a).

\begin{remark}
  Note that when $\alpha <0$ (assuming as always $\alpha+\beta>0$)
  the \emph{rescaled mass} is positive:
  \begin{equation*}
    \int_{\R^d} \Phi(u) |u|^{-d-\alpha} \dd u
    = \int_{\R^d} \Re \Phi(u) |u|^{-d-\alpha} \dd u
    = \int_{\R^d} \frac{\nu_0^2}{\nu_0^2 + |u|^{2\beta} (u \cdot
      \sigma)^2} \frac{1}{|u|^{d+\alpha}} \dd u >0
  \end{equation*}
  by inspection and we also prove that it is finite:
  \begin{align*}
    \int_{\R^d} \Phi(u) |u|^{-d-\alpha} \dd u
    & = \int_{r \ge 0} \left( \int_{\theta \in
      (-\frac{\pi}{2},\frac{\pi}{2})} \frac{\nu_0^2 |\sin^{d-2}\theta|}{\nu_0^2 +
      r^{2(1+\beta)} \sin ^2 \theta} \dd \theta \right) \frac{1}{r^{1+\alpha}} \dd
      r \\
    & = \int_{r=0} ^1  \dots + \int_{r \ge 1} \int_{|\theta| \in
      (\frac{\pi}{4},\frac{\pi}{2})} \dots + \int_{r \ge 1}
      \int_{|\theta| \le \frac{\pi}{4}} \dots =: I_1 + I_2 + I_3.
  \end{align*}
  Then $I_1 \lesssim \int_{r \sim 0} r^{-1-\alpha} \dd r <+\infty$
  and $I_2 \lesssim \int_{r \ge 1} r^{-3-\alpha-2\beta} \dd r
  <+\infty$ and finally
  \begin{align*}
    I_3 & \lesssim \int_{r \ge 1} \left( \int_{|z| \le
          \frac{1}{\sqrt 2}} \frac{\nu_0^2}{\nu_0^2 +
   r^{2(1+\beta)} z^2} \dd z \right) \frac{1}{r^{1+\alpha}} \dd
          r \\
        & \lesssim \int_{r \ge 1} \left( \int_{|w| \le
   \frac{r^{1+\beta}}{\sqrt 2}} \frac{\nu_0^2}{\nu_0^2 +
   w^2} \dd w \right) \frac{1}{r^{2+\alpha+\beta}} \dd r < + \infty.
  \end{align*}
  This implies the existence of the limit defining the diffusion
  coefficient in Lemma~\ref{lem:diffcoeff}. However for such
  $\alpha$, no initial data seem to allow for a fractional
  diffusive limit, see also Subsection~\ref{ss:negative}.
\end{remark}

%%%%%%%%%%%%%%%%%%%%%%%%%%%%%%%%%%%%%%%%%%%%%%%%%%%%%%%%%%%%%%%%%%% 
\section{Proof of the hypotheses for kinetic Fokker-Planck equations}
\label{sec:kfp}

Let us consider $\beta=2$, $\alpha \ge 0$, $\cM$ given by
Hypothesis~\ref{hyp:functional}, and let us consider the operators
\begin{equation*}
  \mathcal{L}(f) := \nabla_v \cdot \left( \cM \nabla_v
    \left( \frac{f}{\cM} \right) \right) \quad \text{ and } \quad
  L h := \cM^{-1} \nabla_v \cdot \left( \cM \nabla_v h \right),
\end{equation*}
which are self-adjoint in, respectively, $L^2_v(\cM^{-1})$ and
$L^2_v(\cM)$.

\subsection{Proof of Hypothesis~\ref{hyp:coercivity}}

This hypothesis is:
\begin{align*}
  \int_{\R^d} \left| \nabla_v h \right|^2 \cM(v) \dd v \ge \lambda
  \left\| h - \cP h \right\|_{-2} \text{ with } \cP h :=
  \int_{\R^d} h(v') \wvp^{-2} \cM(v') \dd v'
\end{align*}
for some $\lambda >0$ (recall that
$\int \wdot^{-2} \cM =1$ as per
Hypothesis~\ref{hyp:functional}). It is a form of the so-called
\emph{Hardy-Poincaré inequality}, see for
instance~\cite{BBDGV} where references are
collected for proving it for $d \ge 3$ and $\alpha >-2$,
\cite[Corollary 1]{DolbeaultToscani} and~\cite[Appendix
A]{blanchet_asymptotics_2009} where it is proved in all
dimensions $d \ge 1$ under the condition $d+\alpha >0$ (for
instance the ``$\alpha$'' in~\cite[Corollary 1]{DolbeaultToscani}
corresponds to our ``$-(d+\alpha)$''). Note that the case when
$d \ge 3$ and $\alpha \in (-d,-2)$ would correspond
in~\cite{DolbeaultToscani,blanchet_asymptotics_2009} to
situations where the Hardy-Poincaré inequality holds without the
need of the zero-average condition. These cases are however
excluded by our assumption $\alpha \geq 0$. 

\subsection{Proof of Hypothesis~\ref{hyp:large-v}}
We perform the following computation (the case of $\tilde \chi_R$ is similar)
\begin{align*}
  \Vert  L(\chi_R)\Vert_2^2
  &= \int_{\R^d} \left| \nabla \cdot \left( \cM\nabla_v \chi_R \right)
    \right|^2 \wdot^2 \,  \frac{{\rm d}v}{\cM} \\
  & = \int_{\R^d} \left| \Delta \chi_R  + \frac{\nabla_v
    \cM}{\cM} \cdot \nabla \chi_R \right|^2 \wdot^2  \,
    \cM \dd v\\
  &= \int_{B_{2R}\backslash B_R} \left| \Delta \chi_R + \frac{\nabla_v
    \cM}{\cM} \cdot \nabla \chi_R \right|^2 \wdot^2 \, \cM \dd v \\
  &= \int_{B_{2R}\backslash B_R} \wdot^{-2} \, \cM \dd v 
    \lesssim_\chi  R^{-(2 + \alpha)} = R^{-(\beta + \alpha)}.
\end{align*}

\subsection{Proof of Hypothesis~\ref{hyp:scalinginfinite}}

The equation satisfied by $\Phi_\eta$ is
\begin{align}
  \label{eq:recall-rescaled}
  - |u|^2_\eta \Delta_u \Phi_\eta + (d+\alpha) u \cdot \nabla_u
  \Phi_\eta - i (u \cdot \sigma) |u|_\eta ^2 \Phi_\eta = \mu(\eta)
  \Phi_\eta. 
\end{align}

We now prove~\eqref{eq:mmt-fract} in
Hypothesis~\ref{hyp:scalinginfinite}, but estimate first the
non-rescaled eigenfunction.
\begin{lemma}
  \label{lem:ponctuel-phi}
  The unique solution to 
  \begin{align*}
    - L \phi_{\eta}
      - i \eta (v \cdot \sigma) \phi_{\eta}= \mu(\eta)
      \wv^{-2} \phi_{\eta} \quad \text{ with } \quad 
      \int_{\R^d} \phi_{\eta}(v) \, \wv^{-2} \, \cM(v) \dd v = 1
  \end{align*}
  satisfies for any $R \ge 1$
  \begin{align*}
    \| \phi_\eta \|_{L^\infty(B(0,R))} \lesssim_R 1 \quad \text{ and }
    \quad \| \Im \phi_\eta \|_{L^\infty(B(0,R))}
    \lesssim_R \max(\eta,\mu(\eta))
  \end{align*}
  with constants depending only on $R$ but uniform in
  $\eta \to 0$.
\end{lemma}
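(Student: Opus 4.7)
The plan is to combine interior elliptic regularity for the Fokker-Planck PDE with the spectral-gap energy estimate, exploiting the parity $\phi_\eta(-v)=\overline{\phi_\eta(v)}$. First I would observe that, since $\mathcal{M}$ is radial and the normalized eigenvalue problem is invariant under $v\mapsto -v$ combined with complex conjugation, uniqueness in Lemma~\ref{lem:existencespectral} forces $\phi_\eta(-v)=\overline{\phi_\eta(v)}$, so that $\Re\phi_\eta$ is even, $\Im\phi_\eta$ is odd, and in particular $\cP\,\Im\phi_\eta=0$. Rewriting the equation as
\begin{equation*}
-\Delta\phi_\eta + (d+\alpha)\frac{v}{\wv^2}\cdot\nabla\phi_\eta = i\eta(v\cdot\sigma)\phi_\eta + \mu(\eta)\wv^{-2}\phi_\eta,
\end{equation*}
the coefficients are smooth and, on any $B(0,2R)$, bounded by constants depending only on $R$ once $\eta$ and $\mu(\eta)$ are small enough.

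For the first bound I would apply the classical interior $L^\infty$--$L^2$ elliptic estimate (De Giorgi-Nash-Moser, used separately on real and imaginary parts) to get $\|\phi_\eta\|_{L^\infty(B(0,R))}\lesssim_R \|\phi_\eta\|_{L^2(B(0,2R))}$. The $L^2$-norm on the right is then controlled by the normalisation together with $\|\phi_\eta - 1\|_{-\beta}\lesssim\mu(\eta)^{1/2}\lesssim 1$ (from Lemma~\ref{lem:existencespectral}), using that the weight $\wv^{-2\beta}\mathcal{M}$ admits a positive lower bound on $B(0,2R)$.

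For the second bound, the PDE for the imaginary part,
\begin{equation*}
-\Delta\Im\phi_\eta + (d+\alpha)\frac{v}{\wv^2}\cdot\nabla\Im\phi_\eta - \mu(\eta)\wv^{-2}\Im\phi_\eta = \eta(v\cdot\sigma)\Re\phi_\eta,
\end{equation*}
has source of size $\lesssim_R\eta$ in $L^\infty(B(0,2R))$ by the first bound, so the same elliptic regularity gives $\|\Im\phi_\eta\|_{L^\infty(B(0,R))}\lesssim_R \|\Im\phi_\eta\|_{L^2(B(0,2R))}+\eta$. It thus remains to prove $\|\Im\phi_\eta\|_{L^2(B(0,2R))}\lesssim_R \max(\eta,\mu(\eta))$, which I would obtain via an energy estimate: testing the above equation against $\Im\phi_\eta\,\mathcal{M}$, using Hypothesis~\ref{hyp:coercivity} together with $\cP\,\Im\phi_\eta=0$, and absorbing $\mu(\eta)\|\Im\phi_\eta\|_{-\beta}^2$ into the coercive left-hand side (valid for $\eta$ small), yields
\begin{equation*}
\|\Im\phi_\eta\|_{-\beta}^2 \lesssim \eta\left|\int_{\R^d}(v\cdot\sigma)\Re\phi_\eta\,\Im\phi_\eta\,\mathcal{M}\,\dd v\right|.
\end{equation*}
To control the bilinear form I would exploit the divergence identity $v\mathcal{M}=(2-d-\alpha)^{-1}\nabla(\wv^2\mathcal{M})$ to integrate by parts, transferring the velocity weight onto a gradient of $\Re\phi_\eta\Im\phi_\eta$ whose weighted $L^2$ norm is controlled by the Dirichlet form (and hence, via coercivity, by $\|\Im\phi_\eta\|_{-\beta}$ itself), and then bootstrap together with the $L^\infty$ bound on $\Re\phi_\eta$ already obtained.

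The hard part will be exactly this last bilinear estimate: a naive Cauchy-Schwarz would require high moments of $\Re\phi_\eta$ that are not a priori available when $\alpha$ is small, so the integration-by-parts trick producing the divergence structure, which is only possible because of the explicit radial form of $\mathcal{M}$, is the crucial ingredient. Once that estimate is in place the quadratic inequality it produces gives $\|\Im\phi_\eta\|_{-\beta}\lesssim \eta+\mu(\eta)\lesssim\max(\eta,\mu(\eta))$, and concluding $\|\Im\phi_\eta\|_{L^2(B(0,2R))}\lesssim_R \max(\eta,\mu(\eta))$ is routine via the positive lower bound on $\wv^{-2\beta}\mathcal{M}$ on the ball.
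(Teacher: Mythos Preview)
Your treatment of the first bound is essentially identical to the paper's: weighted coercivity gives $\|\phi_\eta\|_{-\beta}\lesssim 1$, and then local elliptic regularity for $L=\Delta-(d+\alpha)\wv^{-2}v\cdot\nabla_v$ upgrades this to an $L^\infty$ bound on balls.

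For the second bound, however, your route is substantially more complicated than the paper's and contains a genuine gap. After your integration by parts using $v\cM=(2-d-\alpha)^{-1}\nabla(\wv^2\cM)$, the bilinear term becomes
\[
\int_{\R^d}\wv^2\cM\,\sigma\cdot\nabla\bigl(\Re\phi_\eta\,\Im\phi_\eta\bigr)\dd v,
\]
which carries the weight $\wv^2\cM$, \emph{not} the weight $\cM$ of the Dirichlet form $\int|\nabla\phi_\eta|^2\cM$. Any Cauchy--Schwarz splitting therefore forces either $\int\wv^4|\Im\phi_\eta|^2\cM$ or $\int\wv^4|\Re\phi_\eta|^2\cM$ onto the other factor, and these are finite only when $\alpha>4$ --- precisely the regime where the issue is absent. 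The ``$L^\infty$ bound on $\Re\phi_\eta$'' you invoke is only available on compact sets, so it does not help with the tail. In short, the integration-by-parts trick does not close for the range $\alpha\in[0,4]$ that matters here.

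The paper sidesteps this entirely with a one-line observation you overlooked: since the normalisation $\cP\phi_\eta=1$ is real, one has $\Im\phi_\eta=\Im\bigl(\cP^\bot\phi_\eta\bigr)$ and hence
\[
\|\Im\phi_\eta\|_{-\beta}\le\|\cP^\bot\phi_\eta\|_{-\beta}=\|\phi_\eta-1\|_{-\beta}\lesssim\mu(\eta)^{1/2},
\]
the last inequality being~\eqref{eq:phi-beta} from Lemma~\ref{lem:existencespectral}. Combined with elliptic regularity and the source bound $\|\eta(v\cdot\sigma)\Re\phi_\eta\|_{L^2(B(0,2R))}\lesssim_R\eta$, this immediately gives the $L^\infty$ control on $\Im\phi_\eta$. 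No separate energy estimate on the imaginary part, no parity argument, and no integration by parts are needed.
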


\begin{proof}[Proof of Lemma~\ref{lem:ponctuel-phi}]
  As for the scattering equation, Hypothesis~\ref{hyp:coercivity}
  implies, for $\eta$ small enough
  \begin{align*}
    \lambda \left\| \phi_\eta - 1 \right\|_{-2}^2 \le \mu(\eta) \left\|
    \phi_\eta \right\|_{-2} \quad \Rightarrow \quad 
    \left\| \phi_\eta \right\|_{-2}^2 \le \frac{\lambda}{\lambda -
    \mu(\eta)} \lesssim 1.
  \end{align*}
  The elliptic regularity of the operator
  $L = \Delta - (d+\alpha) \langle v \rangle^{-2} v \cdot
  \nabla_v$, with uniform ellipticity constant, then classically
  implies that
  \begin{align*}
    \left\| \phi_\eta \right\|_{L^\infty(B(0,R))} \lesssim_R 1.
  \end{align*}

  Since $\cP \phi_\eta =1$ in the decomposition
  $\phi_\eta = \cP \phi_\eta + \cP^\bot \phi_\eta$, one deduces
  \begin{align*}
    \left\| \Im \phi_\eta \right\|_{-2} \le \big\| \cP^\bot \phi_\eta 
    \big\|_{-2} \lesssim \mu(\eta)
  \end{align*}
  and the imaginary part satisfies the equation
  \begin{align*}
    - L (\Im \phi_\eta) -\mu(\eta) \wv^{-2} \Im \phi_\eta =
    \eta (v \cdot \sigma) \Re \phi_\eta.
  \end{align*}
  Therefore the elliptic regularity combined with the integral
  bound on $\Im \phi_\eta$ and the bound
  \begin{equation*}
    \| \eta (v \cdot \sigma) \Re \phi_\eta \|_{L^2(B(0,R))}
    \lesssim \eta
  \end{equation*}
  on the right hand side implies that
  \begin{align*}
    \left\| \Im \phi_\eta \right\|_{L^\infty(B(0,R))} \lesssim_R
    \max(\eta,\mu(\eta))
  \end{align*}
  which concludes the proof.
\end{proof}

The following lemma proves~\eqref{eq:mmt-fract}.

\begin{lemma}
  \label{lem:ponctuel-Phi}
  There are $\eta_1 \in (0,\eta_0)$ small enough and
  $\delta \in (0,\min(4-\alpha,3))$ and $C$ large enough
  so that
  \begin{equation*}
    \forall \, \eta \in (0,\eta_1),
    \ \forall \,  u \in \R^d, \qquad
    \vert \Phi_\eta(u) \vert \lesssim
    \vert u \vert_\eta^{C \mu(\eta)} \quad \text{and} \quad
    \vert \Im \Phi_\eta (u) \vert \lesssim \vert u
    \vert_\eta^{\min(2+\alpha,3)-\delta}.
  \end{equation*}
\end{lemma}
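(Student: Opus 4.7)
The plan is to work in the unrescaled variable $v := \eta^{-1/(1+\beta)} u = \eta^{-1/3}u$, under which $|u|_\eta = \eta^{1/3}\wv$ and the eigenvalue equation becomes the uniformly elliptic Schr\"odinger-type PDE
\begin{equation*}
  -\Delta \phi_\eta + (d+\alpha)\wv^{-2}\, v\cdot\nabla\phi_\eta - i\eta(v\cdot\sigma)\phi_\eta - \mu(\eta)\wv^{-2}\phi_\eta \;=\; 0.
\end{equation*}
Because $\mu(\eta)|\ln\eta|\to 0$ in every regime (from $\mu(\eta)\le \textsc{r}_1\Theta(\eta)$), the targeted estimates are equivalent to $|\phi_\eta(v)| \lesssim \wv^{C\mu(\eta)}$ and $|\Im\phi_\eta(v)| \lesssim \eta\,\wv^{\min(\alpha+2,3)-\delta-1}$ up to absolute constants; I establish these in the two steps below.

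\emph{Step 1 --- bound on $|\phi_\eta|$.} Taking the real part of the equation tested against $\overline{\phi_\eta}$ gives that $w:=|\phi_\eta|^2$ satisfies $-\Delta w + (d+\alpha)\wv^{-2}v\cdot\nabla w - 2\mu\wv^{-2}w = -2|\nabla\phi_\eta|^2 \leq 0$. I choose the barrier $g(v):=A\,\wv^{C\mu(\eta)}$ with $C>2/(\alpha+2)$; a direct computation yields
\begin{equation*}
  -\Delta g + (d+\alpha)\wv^{-2}v\cdot\nabla g - 2\mu\wv^{-2}g \;=\; A\,\wv^{C\mu-4}\bigl\{(C\mu(\alpha+2-C\mu)-2\mu)|v|^2 - C\mu d - 2\mu\bigr\},
\end{equation*}
strictly positive on $\{|v|\geq R_0\}$ for $R_0$ large, uniformly in $\eta$ small. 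Lemma~\ref{lem:ponctuel-phi} controls $\|\phi_\eta\|_{L^\infty(B(0,R_0))}$, so choosing $A$ large makes $w\leq g$ on $\partial B(0,R_0)$. The substitution $\widetilde w:=w/g$ then produces an elliptic equation on $\{|v|\geq R_0\}$ with strictly positive zero-order coefficient, allowing the classical maximum principle; to handle the unbounded domain I combine an a priori polynomial bound on $\phi_\eta$ (coming from $\phi_\eta\in L^2(\wv^{-(d+\alpha+2)}dv)$, Moser interior estimates, and a Phragm\'en--Lindel\"of argument exploiting the Airy-type decay forced by the imaginary linear potential $-i\eta(v\cdot\sigma)$) to conclude $w\leq g$ throughout $\{|v|\geq R_0\}$.

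\emph{Step 2 --- bound on $|\Im\phi_\eta|$.} Using Step~1, the source $\eta(v\cdot\sigma)\Re\phi_\eta$ in the real elliptic equation satisfied by $\Im\phi_\eta$ is controlled pointwise by $\eta\wv^{1+C\mu}$. A naive power barrier $B\eta\wv^{\gamma-1}$ would require $\gamma\geq 3+C\mu$, whereas the target $\gamma=\min(\alpha+2,3)-\delta$ is strictly below $3$; this is the main obstacle. The resolution is to exploit the odd parity along $\sigma$ inherited by $\Im\phi_\eta$ (from the uniqueness of the eigenpair and $\cM(-v)=\cM(v)$), using the refined ansatz $h(v):=B\eta(v\cdot\sigma)\wv^{\gamma-2}$ aligned with the $(v\cdot\sigma)$-mode of the source. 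Inserting $h$ into the real elliptic operator and projecting on this spherical harmonic reduces the problem to a one-dimensional ODE in $|v|$ whose critical exponent is exactly $\min(\alpha+2,3)$ --- this threshold being dictated by the integrability of $\wv^2\cM$, i.e.\ the boundary between standard and fractional diffusion. The ODE maximum principle applied with this refined barrier, matched with Lemma~\ref{lem:ponctuel-phi} on $B(0,R_0)$, then closes the estimate for any $\delta\in(0,\min(4-\alpha,3))$. This angular/Legendre decomposition is essentially the Schr\"odinger-type tightness argument announced in the introduction as Lemma~\ref{lem:gain-mmt-rescaled}.
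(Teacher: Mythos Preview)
Your Step~1 is broadly on the right track (barrier plus maximum principle, matched to Lemma~\ref{lem:ponctuel-phi} on a bounded ball), though the paper works directly with $|\Phi_\eta|$ via Kato's inequality rather than $|\phi_\eta|^2$, which avoids the factor-of-two in the zero-order term and makes the supersolution computation cleaner. Your Phragm\'en--Lindel\"of/Airy argument to close at infinity is vague; the paper simply relies on the fact that the barrier $|u|_\eta^{C\mu(\eta)}$ grows (slowly) while $\Phi_\eta$ is a~priori polynomially controlled, so the comparison at infinity is not a serious issue.

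Step~2, however, has a genuine gap. You correctly observe that a global power barrier $B\eta\wv^{p}$ would need $p\ge 3+C\mu$ to dominate the source $\eta\wv^{1+C\mu}$, which is too large. But your proposed fix --- exploiting the oddness of $\Im\phi_\eta$ via the sign-changing ansatz $h=B\eta(v\cdot\sigma)\wv^{\gamma-2}$ and a spherical-harmonic projection --- does not close: oddness under $v\mapsto -v$ does not force $\Im\phi_\eta$ to lie in the single $(v\cdot\sigma)$ angular mode (in dimension $d\ge 2$ there are infinitely many odd harmonics), so projecting onto that mode leaves the rest uncontrolled; moreover a sign-changing $h$ is not a valid comparison function for $|\Im\Phi_\eta|$. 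You also misidentify this with Lemma~\ref{lem:gain-mmt-rescaled}, which is an \emph{integral} moment-gain estimate, unrelated to the pointwise bound here.

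The paper's resolution is much simpler and you have overlooked it: one runs the barrier argument only on the \emph{bounded} annulus $A\eta^{1/3}\le |u|\le 1$. There $|u|_\eta\lesssim 1$, so the supersolution $G(u)=|u|_\eta^{\mathtt e}$ with $\mathtt e=\min(2+\alpha,3)-\delta<3$ automatically dominates the source bound $|u|_\eta^{3+C\mu}$ (smaller exponent, base below~$1$). The inner boundary is handled by Lemma~\ref{lem:ponctuel-phi}; the outer boundary $|u|=1$ is handled by the Step~1 bound $|\Im\Phi_\eta|\le|\Phi_\eta|\lesssim 1$. For $|u|>1$ the claimed inequality is immediate from Step~1 since $C\mu(\eta)<\mathtt e$ and $|u|_\eta>1$. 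No parity, no harmonic decomposition, no ODE reduction is needed.
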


\begin{proof}[Proof of Lemma~\ref{lem:ponctuel-Phi}]
  Multiply~\eqref{eq:recall-rescaled} by
  $\frac{\overline{\Phi_\eta}}{\vert \Phi_\eta \vert}$ and take
  the real part:
  \begin{equation*}
    - |u|_\eta ^2 \Re \left( \frac{\overline{\Phi_\eta}}{\vert
        \Phi_\eta \vert} \Delta_u \Phi_\eta \right)
    + (d+\alpha)u \cdot
    \Re\left(\frac{\overline{\Phi_\eta}}{\vert \Phi_\eta \vert}
      \nabla_u \Phi_\eta \right) = \mu(\eta) \vert \Phi_\eta \vert.
  \end{equation*}
  Since 
  \begin{equation*}
    \nabla_u \vert \Phi_\eta \vert =
    \Re\left(\frac{\overline{\Phi_\eta}}{\vert \Phi_\eta \vert}
      \nabla_u \Phi_\eta \right), \qquad
    \Delta_u \vert \Phi_\eta \vert \geq \Re
    \left( \frac{\overline{\Phi_\eta}}{\vert \Phi_\eta \vert}
      \Delta_u \Phi_\eta \right),
  \end{equation*}
  one gets
  \begin{align*}
    - |u|_\eta^2 \Delta_u \vert \Phi_\eta \vert + (d+\alpha)u \cdot
    \nabla_u \vert \Phi_\eta \vert - \mu(\eta) \vert \Phi_\eta \vert
    \leq 0.
  \end{align*}
  Then observe that the real function
  $F(u) = |u|_\eta^{C\mu(\eta)}$ satisfies for
  $|u| \ge A \eta^{\frac{1}{3}}$ with $A$ large:
  \begin{align*}
    & - |u|_\eta^2 \Delta_u F + (d+\alpha)u \cdot \nabla_u F
      - \mu(\eta) F \\
    & \quad \ge \mu(\eta) F \left[ - C d \frac{|u|_\eta^2}{|u|^2}
      - C (C
      \mu(\eta) -2) \frac{|u|_\eta^2}{|u|^2} 
      + C (d+\alpha) -1 \right] \\
    & \quad \ge \mu(\eta) F \left[ - C d (1+\varepsilon) - C (C
      \mu(\eta) -2) (1+\varepsilon) + C (d+\alpha) -1 \right] \\
    & \quad \ge \mu(\eta) F \left[ C (2+\alpha) - 1 - \varepsilon C (d-2) -
      C^2\mu(\eta) (1+\varepsilon) \right] 
 \end{align*}
 where we have used that
 $\frac{|u|_\eta^2}{|u|^2} \le 1 +\varepsilon$ with $\varepsilon$
 small for $|u| \ge A \eta^{\frac13}$ when $A$ large enough. The
 right hand side is thus positive for $C$ large enough and
 $\varepsilon$ and $\eta$ small enough, since $2+\alpha >0$:
 \begin{equation*}
   \forall \, |u| \ge A \eta^{\frac13}, \quad
   - |u|_\eta^2 \Delta_u F + (d+\alpha)u \cdot \nabla_u F
   - \mu(\eta) F
   \ge 0
 \end{equation*}
 i.e. $F$ is a super-solution in this region. Moreover, Lemma~\ref{lem:ponctuel-phi} shows that
 \begin{equation*}
   \sup_{|u|\leq A \eta^{\frac13}} \left| \Phi_\eta (u) \right| \le
   \left\| \phi_\eta \right\|_{L^\infty(B(0,A))} \lesssim_A 1
 \end{equation*}
 and we can therefore compare $\Phi_\eta$ and $F$ on the ball
 $|u| \leq A \eta^{\frac13}$ with a bound uniform in $\eta$. The
 maximum principle thus implies that
 $|\Phi_\eta| \lesssim |u|_\eta ^{C \mu(\eta)}$ for all
 $|u| \ge A \eta^{\frac13}$ with a bound uniform in
 $\eta$. Finally, since $\eta^{C \mu(\eta)} \sim 1$ as
 $\eta \to 0$, this bound extends to any $u\in \R^d$ up to
 enlarging the comparison constant (independently of
 $\eta \to 0$).

 Take then the imaginary part of~\eqref{eq:recall-rescaled}
 \begin{equation*}
   - |u|^2_\eta \Delta_u \Im \Phi_\eta + (d+\alpha) u \cdot \nabla_u
   \Im \Phi_\eta  - \mu(\eta)
   \Im \Phi_\eta = (u \cdot \sigma) |u|_\eta ^2 \Re \Phi_\eta,
 \end{equation*}
 multiply by $\frac{\Im \Phi_\eta}{|\Im \Phi_\eta|}$ and use the estimate $|\Phi_\eta(u)| \lesssim |u|^{C \mu(\eta)}$ when $|u| \ge A \eta^{\frac13}$ to get for $|u| \ge A \eta^{\frac13}$
 \begin{equation*}
   - |u|_\eta ^2 \Delta_u \vert \Im \Phi_\eta
   \vert + (d+\alpha) u \cdot \nabla_u \vert \Im \Phi_\eta \vert -
   \mu(\eta) \vert \Im \Phi_\eta \vert \leq |u|_\eta ^{3+C\mu(\eta)}.
 \end{equation*}
 Define $G(u) := |u|_\eta ^{\mathtt{e}}$ with
 $\mathtt{e}:=2+\min(\alpha,1)-\delta$ and compute for
 $|u| \in [A \eta^{\frac13},1]$:
 \begin{equation*}
   - |u|_\eta ^2 \Delta_u G + (d+\alpha) u \cdot \nabla_u G -
   \mu(\eta) G 
   \quad \ge G \left[ \mathtt{e} \delta - \mu(\eta)
     - \mathcal{O}\left(A^{-2}\right) \right] \gtrsim G 
   \gtrsim |u|_\eta ^{3+C\mu(\eta)} 
 \end{equation*}
 for $A$ large enough and $\eta$ small enough. The maximum principle then shows that  $|\Im \Phi_\eta| \lesssim |u|_\eta ^{\mathtt{e}}$ on  $|u| \in [A \eta^{\frac13},1]$ by comparing $\Im \Phi_\eta$ and $G$ on $|u|=A \eta^{\frac13}$ thanks to the second inequality in Lemma~\ref{lem:ponctuel-phi}. Again the bound extends to any $|u| \le A \eta^{\frac13}$ using the second inequality in Lemma~\ref{lem:ponctuel-phi}, since  $\max(\eta,\mu(\eta)) \lesssim \eta^{\frac{\mathtt{e}}{3}}$ uniformly as $\eta \to 0$ (examining separately the cases $\alpha \in [0,1]$ and $\alpha \in (1,4)$).
\end{proof}

The next lemma allows us to prove the integral moment estimate~\eqref{eq:mmt-fract-bis} in Hypothesis~\ref{hyp:scalinginfinite}-(ii)-(b). 
\begin{lemma}
  \label{lem:gain-mmt-rescaled}
  There is $\mathtt{g}>0$ such that for any $q \ge -2$ and
  $G, H \in L^2(\wu^{q-d-\alpha})$ such that
  \begin{align}
    \label{eq:Phi-F}
    - |u|_\eta ^2 \Delta_u H +(d+\alpha) u \cdot \nabla_u
    H - i (u \cdot \sigma)
    |u|_\eta ^2 H = G,
  \end{align}
  the following gain of decay at infinity holds
  \begin{align*}
    \int_{\R^d} |H(u)|^2 \wu^{q+\mathtt{g}-d-\alpha} \dd u 
    \lesssim_{q,\zeta} \int_{\R^d} |G(u)|^2 \wu^{q-d-\alpha} \dd
    u + \int_{\R^d} |H(u)|^2 \wu^{q-d-\alpha} \dd u.
  \end{align*}
\end{lemma}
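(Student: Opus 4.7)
\medskip

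The equation is elliptic-Schrödinger in nature, with real part $\mathsf R_\eta H := -|u|_\eta^2 \Delta H + (d+\alpha) u \cdot \nabla H$ (which has the divergence form $\mathsf R_\eta = -|u|_\eta^2 M_\eta^{-1} \nabla \cdot (M_\eta \nabla \cdot)$ for the "rescaled equilibrium" $M_\eta(u) := |u|_\eta^{-(d+\alpha)}$) and imaginary potential $-i(u\cdot\sigma)|u|_\eta^2$. The source of the extra decay is the friction provided by this imaginary potential, which grows like $|u|^3$ at infinity. The plan is a two-step weighted multiplier argument, followed by a final step removing a geometric degeneracy.

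First, I would test equation \eqref{eq:Phi-F} against $\bar H \wu^{q-d-\alpha}$ and take the real part. The imaginary potential does not contribute; integration by parts on the Laplacian term produces the positive quantity $\int |u|_\eta^2 |\nabla H|^2 \wu^{q-d-\alpha} du$, while the drift $(d+\alpha) u \cdot \nabla H \cdot \bar H$ yields (via $2 \Re(\bar H \, u \cdot \nabla H) = u \cdot \nabla |H|^2$) only a $|H|^2$ contribution whose weight is $O(\wu^{q-d-\alpha})$ when $q \geq -2$. Absorption via Young's inequality then gives the weighted gradient estimate
\begin{equation*}
\int |u|_\eta^2 |\nabla H|^2 \wu^{q-d-\alpha} du \;\lesssim_q\; \int |G|^2 \wu^{q-d-\alpha} du + \int |H|^2 \wu^{q-d-\alpha} du.
\end{equation*}

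Second, to extract an $L^2$ gain on $H$ itself I would test \eqref{eq:Phi-F} against $i (u\cdot\sigma) \bar H \wu^{q-d-\alpha}$ and take the real part. The imaginary potential now produces the positive quantity $\int (u\cdot\sigma)^2 |u|_\eta^2 |H|^2 \wu^{q-d-\alpha} du$; the elliptic part contributes, after integration by parts on the Laplacian, terms of the form $\int \Im(\bar H \nabla H) \cdot V \, du$ with a real vector field $V$ built from $\sigma$, $u$, $(u\cdot\sigma) u$ and the weight gradient, which are controlled by Cauchy--Schwarz against the gradient estimate of the first step plus a small multiple of the LHS that may be absorbed. This gives
\begin{equation*}
\int (u\cdot\sigma)^2 |u|_\eta^2 |H|^2 \wu^{q-d-\alpha} du \;\lesssim\; \int |G|^2 \wu^{q-d-\alpha} du + \int |H|^2 \wu^{q-d-\alpha} du.
\end{equation*}

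The third and final step is to convert the estimate above into the claimed $\int |H|^2 \wu^{q+\mathtt{g}-d-\alpha} du$ bound. The weight $(u\cdot\sigma)^2 |u|_\eta^2$ is equivalent to $\wu^4$ at infinity away from the hyperplane $\{u \cdot \sigma = 0\}$, so on the region $\{|u \cdot \sigma| \geq |u|/2\} \cap \{|u| \geq 1\}$ the second estimate directly delivers the desired control with any $\mathtt{g} \leq 2$. On the complementary "degenerate cone" $\{|u \cdot \sigma| \leq |u|/2\}$, any point $u$ is connected to the non-degenerate region by a translation of length $\sim |u|$ along $\sigma$; the fundamental theorem of calculus gives the pointwise bound
\begin{equation*}
|H(u)|^2 \leq 2 |H(u + t \sigma)|^2 + 2 t \int_0^t |\partial_\sigma H(u + s \sigma)|^2 \, ds
\end{equation*}
for a suitable $t \sim |u|$. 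Integrating in $u$ and applying Fubini, the first term is controlled by the non-degenerate estimate and the second by the gradient estimate of step one, both against the target weight $\wu^{q-d-\alpha+\mathtt{g}}$. The main obstacle is precisely this last step: carefully tracking the weights in the Fubini argument on the degenerate cone, and handling the small-$|u|$ region where $|u|_\eta \sim \eta^{1/(1+\beta)}$ rather than $|u|$, which requires that the integrand on the right-hand side already absorbs the $|u|\leq 1$ contributions into the $\int |H|^2 \wu^{q-d-\alpha} du$ term.
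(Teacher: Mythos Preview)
Your first two steps follow the paper's strategy, but both your Step~2 and Step~3 contain a weight mismatch that prevents the argument from closing as written.

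\textbf{Step 2.} When you integrate by parts the elliptic term against your multiplier $i(u\cdot\sigma)\bar H\,\wu^{q-d-\alpha}$, the commutator vector field is $V=\nabla\!\big[|u|_\eta^2(u\cdot\sigma)\wu^{q-d-\alpha}\big]$. The piece coming from $\nabla(u\cdot\sigma)=\sigma$ contributes $|u|_\eta^2\,\wu^{q-d-\alpha}\sigma$, of size $\sim\wu^{\,q-d-\alpha+2}$. After Cauchy--Schwarz or Young this produces an error $\int |H|^2\,\wu^{q-d-\alpha+2}\dd u$, which is exactly a gain of two and cannot be absorbed either into the degenerate left-hand side $\int (u\cdot\sigma)^2|u|_\eta^2|H|^2\wu^{q-d-\alpha}\dd u$ (since $(u\cdot\sigma)$ vanishes on a hyperplane) or into the hypothesis. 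The paper avoids this by testing against $\bar H(u\cdot\sigma)\chi_1^2|u|_\eta^{q-d-\alpha-1}$, i.e.\ with one fewer power of the weight; the analogous commutator then has size $\wu^{q-d-\alpha+1}$ and the residual term is $\int|H|^2\wu^{q-d-\alpha}\dd u$, bounded by assumption. The price is that the controlled quantity is only $\int(u\cdot\sigma)^2|u|\,|H|^2\wu^{q-d-\alpha}\dd u$ (gain $(u\cdot\sigma)^2|u|$, not $(u\cdot\sigma)^2|u|^2$).

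\textbf{Step 3.} With the wide cone $\{|u\cdot\sigma|\le|u|/2\}$ and $t\sim|u|$, the gradient contribution in your FTC bound picks up a factor $t^2\sim|u|^2$ after Fubini, so you end up needing $\int |\nabla H|^2\wu^{q-d-\alpha+2+\mathtt g}\dd u$ bounded, which Step~1 only gives for $\mathtt g\le 0$. Your FTC idea \emph{does} work if you take a narrower region --- e.g.\ the slab $\{|u\cdot\sigma|\le 1\}$ with $t$ of order~$1$, or more generally $\{|u\cdot\sigma|\le |u|^{1-\delta}\}$ with $t\sim|u|^{1-\delta}$ --- and then balance the cone width against the $(u\cdot\sigma)^2|u|$ gain from (the corrected) Step~2. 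The paper takes a different route here: it uses the gradient estimate plus the Caffarelli--Kohn--Nirenberg (or Onofri) embedding to get $L^{2d/(d-2)}$ control of $H|u|^{(q-d-\alpha+2)/2}$, then applies H\"older on a cone of angular width $\wu^{-\delta/2}$, obtaining $\mathtt g\in(0,\tfrac{3}{d+1})$.
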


\begin{proof}[Proof of Lemma~\ref{lem:gain-mmt-rescaled}]
  Consider a real-valued smooth function $\chi_0(u)$ that is zero
  on $|u| \le \frac12$ and equal to $1$ on $|u| \ge 1$, and
  integrate~\eqref{eq:Phi-F} against
  $\overline{H} \chi_0^2 |u|_\eta^{q-d-\alpha}$ and take the real
  part:
  \begin{align*}
    & \int_{|u| \ge 1}  |u|^2_\eta \left|
    \nabla H(u) \right|^2 |u|_\eta ^{q-d-\alpha} \dd u \\
    & \lesssim \int_{\R^d} \left( \chi_0^2  \left| H
      \right|^2 +  \left| \Delta \left( \chi_0^2 \right) \right|
      |u|_\eta ^2 \left| H \right|^2 + \left| \nabla \left(
      \chi_0 \right) \right| |u|_\eta |H|^2 + \chi_0^2\left| G
      \right|^2 \right)  |u|_\eta ^{q-d-\alpha} \dd u \\
    & \lesssim \int_{|u| \ge \frac12}  \left( \left| H
      \right|^2 +  \left| G \right|^2 \right) |u|_\eta
      ^{q-d-\alpha} \dd u. 
  \end{align*}
  Integrate then ~\eqref{eq:Phi-F} against
  $\overline{H} (u \cdot \sigma) \chi_1^2
  |u|_\eta^{q-d-\alpha-1}$ where $\chi_1$ is a real-valued smooth
  function that is zero on $|u| \le 1$ and equal to $1$ on
  $|u| \ge 2$, and take the imaginary part:
  \begin{align*}
    & \int_{\R^d} (u \cdot \sigma)^2 \chi_1(u)^2 \left|
      H(u) \right|^2 |u|_\eta ^{1+q-d-\alpha} \dd u \\
    & \lesssim
      \int_{|u| \ge 1}  |u|^2_\eta \left|
      \nabla H(u) \right|^2 |u|_\eta ^{q-d-\alpha} \dd u
      + \int_{|u| \ge 1} \left( \left| H
      \right|^2 +  \left| G \right|^2 \right) |u|_\eta ^{q-d-\alpha} \dd
      u \\
    & \lesssim \int_{|u| \ge \frac12} \left( \left| H
      \right|^2 +  \left| G \right|^2 \right) |u|_\eta ^{q-d-\alpha} \dd
      u 
  \end{align*}
  where we have used the real part estimate in the last line. This yields
  \begin{align*}
    \int_{\R^d} (u \cdot \sigma)^2|u| \left|
    H(u) \right|^2 \wu ^{q-d-\alpha} \dd u
    \lesssim
    \int_{\R^d} \left( \left| H
    \right|^2 +  \left| G \right|^2 \right) \wu^{q-d-\alpha} \dd
    u. 
  \end{align*}
  This first estimate improves the decay at infinity outside a
  cone around $u \bot \sigma$. We now use the ellipticity of the
  equation to control this region. The operator is written
  $\mathsf L_\eta = - |u|_\eta^{d+\alpha} \nabla_u \cdot [
  |u|_\eta ^{-d-\alpha} \nabla_u ]$ and we deduce by simple
  commutator estimates that
  \begin{align*}
    \int_{|u| \ge 2} \left| \nabla_u \left( 
    H(u) |u|^{\frac{q-d-\alpha+2}{2}} \right) \right|^2 \dd u
    \lesssim \int_{|u| \ge 1} \left( \left| H
    \right|^2 +  \left| G \right|^2 \right) |u|_\eta ^{q-d-\alpha} \dd
    u. 
  \end{align*}

  Consider first the case $d>2$. The Caffarelli-Kohn-Nirenberg
  inequality yields
  \begin{equation*}
    \left\| H(\cdot) | \cdot |^{\frac{q-d-\alpha+2}{2}} 1_{|\cdot| \ge 2}
    \right\|^2 _{L^{\frac{2d}{d-2}}}
    \lesssim \int_{|u| \ge 1} \left( \left| H
      \right|^2 +  \left| G \right|^2 \right)
    |u|_\eta ^{q-d-\alpha} \dd u.
  \end{equation*}
  Consider the cone
  $\mathcal C := \{ | \frac{u}{\vert u \vert} \cdot \sigma | \le
  \wu^{-\frac{\delta}{2}} , \vert u \vert \geq 2\}$ for some
  $\delta>0$, and a gain of weight $\wu^{\mathtt{g}}$ for some
  $\mathtt{g} >0$ to be chosen later. The Hölder inequality then
  yields
  \begin{equation*}
    \int_{\mathcal C} \left| H(u) \right|^2
    |u|^{q-d-\alpha+\mathtt{g}} \dd u \le
    \left\| H(u) |u|^{\frac{q-d-\alpha+2}{2}}
      1_{|u| \ge 2} \right\|^2 _{L^{\frac{2d}{d-2}}(\R^d)}
    \left( \int_{\mathcal C} |u|^{\frac{d(\mathtt{g}-2)}{2}} \dd u
    \right)^{\frac{2}{d}}.
  \end{equation*}
  The extra volume integral may be estimated using spherical coordinates as
  \begin{equation*}
    \int_{\mathcal C} |u|^{\frac{d(\mathtt{g}-2)}{2}} \dd u
    \lesssim \int_1^{+\infty} r^{d-1} \left\langle r
    \right\rangle^{\frac{d(\mathtt{g}-2)}{2}} \left\langle r
    \right\rangle^{-\frac{\delta}{2}} \dd r \lesssim \int_1^{+\infty}
    r^{-1+\frac{d\mathtt{g}}{2}-\frac{\delta}{2}} \,\dd r
  \end{equation*}
  which is finite as soon as $\mathtt{g} < \frac{\delta}{d}$ (which
  defines and restricts $\delta$). Outside the cone we use the
  first estimate:
  \begin{align*}
    \int_{\mathcal C^c \cap \{ |\cdot| \ge 2\}} \left| H(u) \right|^2
    |u|^{q-d-\alpha+\mathtt{g}} \dd u 
    \lesssim \int_{\R^d} \left| H(u) \right|^2   (u \cdot
    \sigma)^2
    |u|^{-2+\delta+q-d-\alpha+\mathtt{g}}  \dd u 
  \end{align*}
  which is controlled as soon as $\mathtt{g} \le 3-\delta$. The
  constraints are compatible for $\mathtt{g} \in (0,\frac{3}{d+1})$.

  In the case $d=1$, the gain of decay is immediate from the
  first estimate alone. In the case $d=2$, we follow a similar
  argument but replace the Caffarelli-Kohn-Nirenberg inequality
  with the Onofri inequality:
  \begin{equation*}
    \left\| H(u) |u|^{\frac{q-d-\alpha+2}{2}} 1_{|u| \ge 2}
    \right\|^2 _{L^{2p}}
    \lesssim_p  \int_{|u| \ge 1} \left( \left| H
      \right|^2 +  \left| G \right|^2 \right)
    |u|_\eta ^{q-d-\alpha} \dd u
  \end{equation*}
  for any $p<\infty$. The Hölder inequality then gives
  \begin{equation*}
    \int_{\mathcal C} \left| H(u) \right|^2
    |u|^{q-d-\alpha+\mathtt{g}} \dd u \le
    \left\| H(u) |u|^{\frac{q-d-\alpha+2}{2}}
      1_{|u| \ge 2} \right\|^2 _{L^{2p}(\R^d)}
    \left( \int_{\mathcal C} |u|^{q(\mathtt{g}-2)} \dd u
    \right)^{\frac{1}{q}}
  \end{equation*}
  where $q=\frac{1}{1-1/p}$ is the exponent conjugate to $p$. The
  conclusion follows as before by taking $p$ large enough.
\end{proof}

Let us deduce the moment bound~\eqref{eq:mmt-fract-bis} from
Lemma~\ref{lem:gain-mmt-rescaled}. Observe first that the
pointwise bound $|\Phi_\eta(u)| \lesssim |u|^{C \mu(\eta)}$
proved in Lemma~\ref{lem:ponctuel-Phi} implies that
\begin{align*}
  \int_{|u| \ge 1} |\Phi_\eta(u)|^2 |u|^{q-d-\alpha} \dd u < +\infty
\end{align*}
for $q=-2$ and $\eta$ small enough with bound uniform in $\eta$,
since $\alpha +2 >0$. We then repeatedly apply
Lemma~\ref{lem:gain-mmt-rescaled} with $H=\Phi_\eta$ and
$G = \mu(\eta) \Phi_\eta$ to obtain that $\Phi_\eta$ decays
faster than any polynomial at infinity, with constants uniform in
$\eta$ (note that $\mathtt{g}$ is independent of $q$ in the
lemma).
% \textcolor{red}{CM~: vérifier le nouveau paragraphe ci-dessous}

To prove the asymptotic convergence we consider the solution
$\Phi : \R^d \to \C$ to
\begin{equation}
  \label{eq:pb-resc}
  - |u|^2 \Delta \Phi + (d+\alpha) u \cdot \nabla \Phi + i (u
  \cdot \sigma) \Phi =0, \quad \Phi(0) = 1, \quad \Phi \in C^0,
  \quad \Phi \in L^2(\langle u \rangle^\infty).
\end{equation}
This solution exists by weak limit $\Phi_\eta \to \Phi$. The
regularity of $\Phi$ follows from the equation outside of zero,
and the fact that $\Phi$ goes to $1$ as $u$ goes to $0$ comes
from the fact that the normalisation of $\phi_\eta$ yields
$\int_{\R^d} \vert \Phi(u) -1 \vert^2 \vert u \vert^{-d-2-\alpha}
\dd u < \infty$ (a different conclusion near zero would make this
integral infinite). The solution $\Phi$ is unique since
integrating the equation for the difference of two solutions
$\Phi:=\Phi_1-\Phi_2$ against $\overline{\Phi} |u|^{-d}$ yields
$\int_{\R^d} |\nabla \Phi|^2 |u|^{2-d} \dd u =0$ with
all boundary terms near zero behaving like $|\Phi|^2(u)$,
$u \sim 0$. Note that this solution can computed semi-explicitly
as an entire series
$\Phi(u) = \sum_{k,l \ge 0} a_{k,l} |u|^{2k} (u \cdot \sigma)^l$
by solving the two-parameters induction (see the discussion later
and Figure~\ref{fig:Recurrence}).

With the solution $\Phi$ above at hand, the convergence $\Phi_\eta \to \Phi$ in $L^2_{\text{\tiny loc}}(\R^d \backslash{\{0\}})$ follows easily from the bounds established and the convergence of the coefficients of the equation satisfied by $\Phi_\eta$: one can prove that $\eta \mapsto \Phi_\eta$ is Cauchy in $L^2$ on any such compact set as $\eta \to 0$, and such convergence has a polynomial rate and is uniform on any compact set in $\R^d \setminus \{0\}$.

We now prove Hypothesis~\ref{hyp:scalinginfinite}-(ii)-(a). The
equation for $W_\eta := \Phi_\eta - \Phi$ is
\begin{align*}
- |u|^2_\eta \Delta_u  W_\eta + (d+\alpha) u \cdot
    \nabla_u  W_\eta - i(u\cdot \sigma)|u|^2_\eta W_\eta
    - \mu(\eta) \ W_\eta 
  &=  \eta^{\frac23} (d+\alpha) \frac{u}{|u|^2} \cdot \nabla_u 
    \Phi + \mu(\eta) \Phi.
\end{align*}
We derive a bound on $ \nabla_u \Phi$. For this, differentiate
the limit equation for $\Phi$,
\begin{align*}
  - |u|^2 \Delta \left( \nabla \Phi \right)
  + (d+ \alpha) u \cdot \nabla\left( \nabla \Phi \right)
  + (d+\alpha) \nabla \Phi - 2 ( \Delta \Phi ) u = i (u \cdot
  \sigma) |u|^2 \nabla \Phi + i \nabla \left( ( u \cdot \sigma)
  |u|^2 \right)  \Phi.
\end{align*}
Test against
$\frac{\overline{\nabla \Phi}}{\vert \nabla \Phi \vert}$, use
\begin{align*}
  & \nabla \left\vert \nabla \Phi\right\vert =
    \Re\left(\frac{\overline{\nabla \Phi}}{\vert \nabla \Phi
    \vert} \nabla \nabla \Phi \right), \qquad 
    \Re \left( \frac{\overline{\nabla \Phi}}{\vert \nabla \Phi \vert}
    \Delta \left( \nabla \Phi \right) \right) \leq
    \Delta \left(\left\vert \nabla \Phi\right\vert\right),\\
  & u \cdot \Re \left( \Delta \Phi  \frac{\overline{\nabla \Phi}}{\vert
    \nabla \Phi \vert} \right) =
    u \cdot \nabla \left( \vert \nabla \Phi \vert \right),
    \qquad \vert \nabla \left( ( u \cdot \sigma) |u|^2 \right)
    \vert \leq 3 |u|^2,
\end{align*}
and take the real part to get
\begin{equation*}
  - |u|^2 \Delta \left( \left\vert \nabla \Phi\right\vert \right)
  + (d+ \alpha-2) u \cdot \nabla\left( \left\vert \nabla \Phi
    \right\vert \right)
  + (d+\alpha) \left\vert \nabla \Phi \right\vert
  \leq 3 |u|^2 \Vert \Phi \Vert_\infty.
\end{equation*}
The comparison principle (with the same sort of computations as
for Lemma~\ref{lem:ponctuel-Phi}) then yields that
$\left\vert \nabla \Phi\right\vert \lesssim |u|^2$ since we know
from the equation that $\nabla \Phi (0) = 0$. As a consequence,
\begin{equation*}
    - |u|^2_\eta \Delta_u  \vert W_\eta \vert + (d+\alpha) u \cdot
    \nabla_u  \vert W_\eta \vert 
    - \mu(\eta) \vert W_\eta \vert \lesssim \eta^{\frac23}
    \vert u \vert + \mu(\eta)
    \lesssim \eta^{\frac23}  \vert u \vert_\eta,
\end{equation*}
since $\Phi$ is uniformly bounded. From this, one deduces
$\vert W_\eta \vert \lesssim \eta^{\frac23}\vert u \vert_\eta$
(since $W_\eta$ is of order $\eta$ near zero). This implies the
hypothesis since then, recalling $\alpha = 4$ and $\beta = 2$,
\begin{align*}
  &\left\vert 
    \int_{1 \geq \vert u \vert \geq \eta^{\frac{1}{3}}} (u \cdot
    \sigma)  \Big[ \Im\Phi_\eta(u) - \Im\Phi(u) \Big]
    |u|_\eta ^{-d-4} \dd
    u \right\vert \\
  &\leq \int_{1 \geq \vert u \vert \geq \eta^{\frac{1}{3}}}
    \eta^{\frac23}|u|_\eta ^{-d-2} \dd u
    = \int_{\eta^{-\frac{1}{3}} \geq \vert u \vert \geq 1}
    \frac{\eta^{\frac{d+2}{3}}\dd u}{\eta^{\frac{d+2}{3}} \vert 1
    + |w|^2\vert^{\frac{d+2}{2}} } \lesssim 1.
\end{align*}
We now prove the second part of
Hypothesis~\ref{hyp:scalinginfinite}-(ii)-(a), regarding the
existence of a scaling limit of $\Phi$. The rescaled limit $\Phi$
satisfies $\Phi(0)=1$ and the Schrödinger equation
\begin{equation*}
  -|u|^2 \Delta_u \Phi + (d+\alpha)u \cdot  \nabla_u \Phi
  - i (u \cdot \sigma) |u|^2 \Phi =0.
\end{equation*}
Therefore, $\Omega_\lambda(u):=\frac{\Im \Phi(\lambda
  u)}{\lambda^3}$ satisfies (using that $\Phi$ is continuous at
$\Phi(0)=1$)
\begin{equation*}
  -|u|^2 \Delta_u \Omega_\lambda(u) + (d+\alpha)u \cdot  \nabla_u
  \Omega_\lambda(u) = (u \cdot \sigma) |u|^2 \Re \Phi(\lambda u)
  \xrightarrow[\lambda \to 0]{}  (u \cdot \sigma) |u|^2.
\end{equation*}
Since the limit equation
$-|u|^2 \Delta \Omega + (d+\alpha)u \cdot \nabla \Omega = (u
\cdot \sigma) |u|^2$ has unique continuous solution satisfying
$\Omega(0)=0$ given by
$\Omega(u)=\frac{(u \cdot \sigma)|u|^2}{d+8}$, elliptic estimates
imply $\Omega_\lambda \to \Omega$ in $L^1(\mathbb{S}^{d-1})$.

\begin{remark}
  Note that interestingly, and in contrast with scattering
  operators, the rescaled mass vanishes for the Fokker-Planck
  operator when $\alpha \in (-\beta,0)=(-2,0)$:
  \begin{equation*}
    \int_{\R^d} \Phi(u) |u|^{-d-\alpha} \dd u
    = \int_{\R^d} \Re \Phi(u) |u|^{-d-\alpha} \dd u = 0
  \end{equation*}
  Indeed, it follows from constructing
  $\mathfrak G : \R^d \to \C$ that is $C^2$ so that
  $\mathfrak G \Phi$ and $ \Phi\nabla \mathfrak G$ decay faster
  than any polynomials at infinity, that $\mathfrak G(0)$ and
  $\nabla \mathfrak G(0)=0$, and
  \begin{equation}
    \label{eq:induction-G}
    - |u|^{d+\alpha} \nabla \cdot \left[ |u|^{-(d+\alpha)} \nabla
      \left( |u|^2 \mathfrak G \right) \right] - i (u \cdot \sigma)
    |u|^2 \mathfrak G = 1.
  \end{equation}
  Then integration by parts are justifed near zero and infinity
  and imply
  \begin{align*}
    \int_{\R^d} \Phi(u)|u|^{-d-\alpha} \dd u
    & = \int_{\R^d}
      \Phi(u)\left\{ - |u|^{d+\alpha} \nabla \cdot \left[
      |u|^{-(d+\alpha)} \nabla \left( |u|^2 \mathfrak G \right)
      \right] - i (u \cdot \sigma) |u|^2 \mathfrak G
      \right\} |u|^{-d-\alpha} \dd u \\
    & = \int_{\R^d} \left\{ -|u|^2 \Delta_u \Phi + (d+\alpha)u
      \cdot  \nabla_u \Phi - i (u \cdot \sigma) |u|^2 \Phi \right\}
      \mathfrak G(u) |u|^{-d-\alpha} \dd u =0.
  \end{align*}
  To construct such $\mathfrak G$, plug the ansatz
  $\mathfrak G(u) = \sum_{k,l \ge 0} a_{k,l} B_{k,l}(u)$ with
  $B_{k,l}(u) := |u|^{2k} (u \cdot \sigma)^l$ and $a_{0,l}=0$
  for all $l \in \mathbb N$, $a_{k \neq 1,0} =0$ and $a_{1,0}=1$,
  into~\eqref{eq:induction-G} to get the sufficient condition
  \begin{equation*}
    \left[ 2(k+1) (\alpha - 2l - 2k) + (d+\alpha) l \right] a_{k,l}
    = i a_{k-1,l-1} + l (l-1) a_{k-1,l+2}.
  \end{equation*}
  This relation forms a triangle in the $(l,k)$ quadrant with
  horizontal base, and given the conditions above one can solve
  by induction on $l$ for each $k$ and then on $k$, see
  Figure~\ref{fig:Recurrence}. Moreover one can prove by
  induction (only large $k,l$ matter) that
  $\vert a_{k.l} \vert \lesssim \frac{1}{k! l!}$ so that
  $\mathfrak G$ is well-defined.
  \begin{figure}[!ht]
    \includegraphics[scale=1.3]{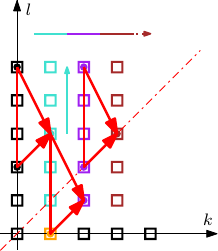}
    \caption{The black squares are zero values given to start the
      induction. The orange square has value $1$. The black
      values for $k=0$ allow to compute the turquoise blue
      squares (that are all zero then). The turquoise blue
      squares together with the (non-zero) orange square give the
      values held by the purple squares, etc. On each column,
      values are computed from bottom to top, sliding the
      triangular red planer on the previous column.}
    \label{fig:Recurrence}
  \end{figure}
  Finally $\mathfrak H(u) := |u|^2 \mathfrak G(u)$ satisfies,
  arguing as before,
  $- |u|^{d+\alpha} \nabla \cdot [ |u|^{-(d+\alpha)} \nabla ( |
  \mathfrak H | ) ] \le 1$. Then $|\cdot|^{2+\alpha+0}$ provides
  a super-solution at infinity, and $\mathfrak G$ bounded for $u$
  near zero, so this proves that $\mathfrak G \Phi$ decays faster
  than any polynomial (see the discussion following Lemma
  \ref{lem:gain-mmt-rescaled} giving the decay of
  $\Phi$). Similar arguments can be performed on
  $|\nabla\mathfrak H|$ and $(\nabla \mathfrak G)\Phi$ by
  differentiating \eqref{eq:induction-G}.
\end{remark}

%%%%%%%%%%%%%%%%%%%%%%%%%%%%%%%%%%%%%%%%%%%%%%%%%%%%%%%%%%%%%%%%%%%
\section{Proof of the hypotheses for kinetic
  L\'evy-Fokker-Planck equations}
\label{sec:lkfp}

Consider $s \in (\frac12,1)$, $\alpha >s$ and $\cM$ is given by
Hypothesis~\ref{hyp:functional}, and the operator
\begin{equation*}
  \mathcal{L}(f) :=
  \Delta_v ^s f + \nabla_v\cdot\left(U\,f\right).
\end{equation*}
The fractional Laplacian is defined as in~\eqref{eq:fract-Lap}
but we use the equivalent definition (see for example
\cite{daoud_fractional_2022} and the references therein)
\begin{equation*}
  \Delta_v ^s f(v) := - C_{d,s}
  \int_{\R^d} \frac{\big[ f(v)-f(v') \big]}{\vert v - v'
    \vert^{d+2s}}\dd v'
  \quad \text{ with } \quad C_{d,s} := \frac{4^s
    \Gamma\left(\frac{d}2 +s \right)}{\pi^{\frac{d}2} |\Gamma(-s)|}. 
\end{equation*}
The drift force $U$ solves
$\Delta_v ^s \cM+ \nabla_v\cdot\left(U\,\cM\right) =
0$. \cite[\textcolor{red}{Proposition 7}]{BDLS} shows that the
explicit radial solution $U$ to this equation satisfies
$U(v) = \mathbb{U}(v) \wv^{-\beta} v$ with $\beta:=2s-\alpha$ and
$\mathbb{U}$ a uniformly positive function bounded from
above. The operator $L$ is
\begin{align*}
  L h := \cM^{-1} \Delta_v ^s \left(\cM h\right)
    + \cM^{-1}\nabla_v\cdot\left( U \,\cM h\right)
  = \cM^{-1}\Big[ \Delta_v ^s\left(\cM h \right) -
    \left( \Delta_v^s \cM \right) h \Big] + U \cdot
   \nabla_v h.
\end{align*}

\subsection{Proof of Hypothesis~\ref{hyp:coercivity}}

This hypothesis is implied by the fractional Hardy-Poincaré
inequalities proved in \cite{BDLS} and earlier
in~\cite{wang-2015}:
\begin{proposition}[\cite{BDLS,wang-2015}]
  \label{prop:coer-bdls}
  Let $d\ge1$, $s\in(0,1)$, $\alpha>s$ and
  $\beta:=2s-\alpha$. Then there is $\lambda >0$ (depending on
  $s$) such that
  \begin{align*}
    - \Re \, \big\langle L h,h 
    \big\rangle \ge \lambda \Vert h - \mathcal{P}h \Vert_{-\beta}^2.
  \end{align*}
\end{proposition}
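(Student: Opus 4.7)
The plan is to recast $-\Re\langle Lh,h\rangle_{L^2_v(\cM)}$ as an explicit weighted fractional Dirichlet form and then invoke the fractional Hardy--Poincaré inequality of \cite{BDLS,wang-2015}. The first step is to use the equilibrium identity $\Delta_v^s\cM+\nabla_v\cdot(U\cM)=0$ to absorb the zero-order piece of $L$ and rewrite the operator in conservative form
\[
  Lh \;=\; \cM^{-1}\left[\Delta_v^s(\cM h) + \nabla_v\cdot(U\cM h)\right],
\]
so that $\langle Lh,h\rangle_{L^2_v(\cM)} = \int\left[\Delta_v^s(\cM h) + \nabla_v\cdot(U\cM h)\right]\bar h\,\dd v$.

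I would then compute the two pieces separately. For the drift part, a standard integration by parts combined with $\nabla_v\cdot(U\cM)=-\Delta_v^s\cM$ gives $\Re\int\nabla_v\cdot(U\cM h)\,\bar h\,\dd v = -\tfrac12\int\Delta_v^s\cM\,|h|^2\,\dd v$, which symmetrises in $v\leftrightarrow v'$ into an integral of $(\cM(v)-\cM(v'))(|h(v)|^2 - |h(v')|^2)|v-v'|^{-d-2s}$. For the fractional part, the Gagliardo identity converts $\int\Delta_v^s(\cM h)\,\bar h\,\dd v$ into the double integral of $(\cM(v)h(v)-\cM(v')h(v'))\overline{(h(v)-h(v'))}\,|v-v'|^{-d-2s}$. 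Expanding
\[
\Re\left[(\cM(v)h(v)-\cM(v')h(v'))(\bar h(v)-\bar h(v'))\right] = \cM(v)|h(v)|^2 + \cM(v')|h(v')|^2 - (\cM(v)+\cM(v'))\Re[h(v)\bar h(v')]
\]
and collecting the two contributions produces the clean cancellation
\[
  -\Re\langle Lh,h\rangle_{L^2_v(\cM)} \;=\; \frac{C_{d,s}}{4}\iint\frac{(\cM(v)+\cM(v'))\,|h(v)-h(v')|^2}{|v-v'|^{d+2s}}\,\dd v\,\dd v',
\]
which is manifestly non-negative and vanishes exactly on constants.

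It then remains to compare this Dirichlet form with $\|h-\cP h\|_{-\beta}^2 = \int|h-\cP h|^2\,\cM_\beta\,\dd v$. Since $\beta = 2s-\alpha$ gives $\cM_\beta\sim\wdot^{-(d+2s)}$, the desired coercivity is exactly the fractional Hardy--Poincaré inequality for the power-law measure $\cM\sim\wv^{-(d+\alpha)}$ with $\alpha>s$; this is proved in arbitrary dimension $d\ge1$ in \cite{wang-2015} and exploited in the kinetic L\'evy--Fokker--Planck context in \cite{BDLS}. Quoting their statement supplies the constant $\lambda>0$ and concludes. The main obstacle is really the algebraic symmetrisation step above that absorbs the indefinite zero-order term $\int\Delta_v^s\cM\cdot|h|^2\,\dd v$ into a positive Dirichlet form; once this reduction is in place, the spectral-gap content is simply inherited from the cited Hardy--Poincaré inequality, and the compatibility with the weighted norm $\|\cdot\|_{-\beta}$ is dictated by the arithmetic relation $\alpha+\beta=2s$.
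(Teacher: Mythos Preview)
Your proposal is correct and follows essentially the same route as the paper: both arguments reduce $-\Re\langle Lh,h\rangle$ to the symmetric Dirichlet form
\[
  \frac{C_{d,s}}{4}\iint_{\R^d\times\R^d}\frac{(\cM(v)+\cM(v'))\,|h(v)-h(v')|^2}{|v-v'|^{d+2s}}\,\dd v\,\dd v'
\]
via the equilibrium relation $\nabla_v\cdot(U\cM)=-\Delta_v^s\cM$ and a symmetrisation in $(v,v')$. The only substantive difference is in the closing step: you invoke the fractional Hardy--Poincar\'e inequality of \cite{BDLS,wang-2015} directly, whereas the paper inserts the pointwise kernel comparison
\[
  \wv^{-\beta}\wvp^{-\beta}\cM(v)\cM(v') \;\lesssim\; \frac{\cM(v)+\cM(v')}{|v-v'|^{d+2s}}
\]
(which follows by matching the asymptotics since $\beta=2s-\alpha$) and then appeals to the elementary coercivity of the scattering operator with kernel $b(v,v')=\wv^{-\beta}\wvp^{-\beta}$. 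The paper's detour is slightly more self-contained and makes transparent why the weight $\cM_\beta$ appears on the right-hand side; your direct citation is cleaner but defers that structural explanation to the references.
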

\begin{proof}
  Compute
  \begin{align*}
    - \Re \, \big\langle L h,h \big\rangle 
    &= - \Re \, \int_{\R^d} \Big[ \Delta_v^s(\cM h)
      + \nabla_v\cdot\left( U \,\cM h\right) \Big]
      \overline{h} \dd v'\\
    &= - \Re \, \int_{\R^d} \left(\Delta_v^s \overline{h}\right)
      h \, \cM \dd v + \Re \, \int_{\R^d} \frac12 U \cdot \nabla_v
      \left(\vert h \vert^2\right)\cM \dd v\\
    &= - \Re \, \int_{\R^d} \left(\Delta_v^s\overline{h}\right)
      h \, \cM \dd v - \Re \, \int_{\R^d} \frac12
      \nabla_v\cdot\left(U\,\cM \right) \vert h \vert^2  \dd v\\
    &= - \Re \, \int_{\R^d} \left(\Delta_v^s\overline{h}\right)
      h \, \cM \dd v + \Re \, \int_{\R^d} \frac12
      \left(\Delta_v^s \cM\right) \vert h \vert^2  \dd v\\
    &= \frac{C_{d,s}}2  \int_{\R^d \times \R^d}
      \frac{|h-h'|^2}{|v-v'|^{d+2s}} \,\cM \dd v \dd v' 
  \end{align*}
  and thus
  \begin{equation*}
    - \Re \, \big\langle L h,h 
    \big\rangle =\frac{C_{d,s}}{4} \int_{\R^d}
    \frac{|h-h'|^2}{|v-v'|^{d+2s}} \,(\cM+\cM')\dd v \dd v'.
  \end{equation*}
  Note that there is $\kappa >0$ such that
  \begin{equation*}
    \forall \, (v,v') \in \R^d \times \R^d,\quad
    \wv^{-\beta}\,\wvp^{-\beta}\,\cM\,\cM' \leq
    \kappa\,\frac{\cM+\cM'}{|v-v'|^{d+2s}},
  \end{equation*}
  by matching the asymptotics at large $v$ and $v'$. Hence we get
  that
  \begin{equation*}
    - \Re \, \big\langle L h,h 
    \big\rangle 
    \gtrsim \int_{\R^d} |h-h'|^2\wv^{-\beta}\,\wvp^{-\beta} \, \cM
    \,\cM' \dd v \dd v' \gtrsim \Vert h
    - \mathcal{P}h \Vert_{-\beta}^2,
  \end{equation*}    
  where we used in the last line the classical coercivity for
  scattering operators discussed above.
\end{proof}

Note that in the coercivity inequality of Proposition~\ref{prop:coer-bdls}, 
$\lambda(s) \to 0$ as $s \to 1$ since $C_{d,s} \to 0$ as
$s \to 1$. This explains why the coercivity weight $\beta=2$ of
the Fokker-Planck operator differs from the coercivity weight
$\beta=2-\alpha$ of the Lévy-Fokker-Planck operator when
$s \to 1$. In fact when $\alpha =2s$ and $s \to 1$ the correct
formal limit is the Fokker-Planck operator with Gaussian
equilibrium, in view of the general theory of Lévy processes, for
which $\beta=0$ is indeed the limit of $\beta=2-\alpha=2-2s$ as
$s \to 1$.

\subsection{Proof of Hypothesis~\ref{hyp:large-v}}

We estimate (the case of $\tilde \chi_R$ is similar)
\begin{equation*}
  \Vert  L(\chi_R)\Vert_\beta
  = \left\| \cM^{-1} \Big[ \Delta_v^s\left(\cM\chi_R\right)
    - \left(\Delta_v^s \cM \right) \chi_R \Big]
    + U \cdot  \nabla_v \chi_R \right\|_\beta
\end{equation*}
in several steps. Write first
\begin{align*}
  \left\| U \cdot \nabla_v\chi_R \right\|_\beta^2
  &= \int_{\R^d} \left| U \cdot \nabla_v\chi_R \right|^2
    \wv^{\beta} \cM(v) \dd v
    = \int_{\R^d} \left|  \mathbb{U}(v) \wv^{-\beta} v \cdot
    \nabla_v\chi_R \right|^2 \wv^{\beta} \cM(v) \dd v\\
  &\leq \Vert  \mathbb{U} \Vert_\infty
    \int_{\R^d} \left| v \cdot \nabla_v
    \chi_R \right|^2 \cM_\beta(v) \dd v \lesssim R^{-\alpha-\beta}.
\end{align*}
Then split the other term into
\begin{align*}
  &\left\| \cM^{-1} \Big[ \Delta_v^s \left(\cM\chi_R
    \right) - \left(\Delta_v^s \cM \right) \chi_R
    \Big] \right\|_\beta^2 \\
  &= \left\| \cM^{-1} \Big[ \Delta_v^s
    \left( \cM\chi_R \right) - \left(\Delta_v^s \cM
    \right) \chi_R \Big] \textbf{1}_{\vert v \vert \leq R}
    \right\|_\beta^2
    + \left\| \cM^{-1} \Big[ \Delta_v^s \left(
    \cM\chi_R \right) - \left( \Delta_v^s \cM \right)
    \chi_R \Big] \textbf{1}_{\vert v \vert \geq R} \right\|_\beta^2.
\end{align*}
When $|v|\le R$, write $v=Rw$ with $|w| \le 1$ and observe that
$\chi(w) = \chi(w')$ when $\vert w' \vert \leq 1$ to get,
\begin{align*}
  \left| \left[\Delta_v^s \left(\cM \chi_R \right) -
  \left( \Delta_v^s \cM \right) \chi_R \right](Rw)
  \right|
  &= C_{d,s} \left| \int_{\vert w' \vert \geq 1}
    \frac{\chi(w)-\chi(w')}{R^{d+a}
    \left|w - w'\right|^{d+2s}} \cM(Rw') \, R^d \dd w' \right| \\
  &\lesssim \int_{\vert w' \vert \geq 1}
    \frac{\left|\chi(w)-\chi(w')\right|}{R^{d+a+\alpha}
    \left|w - w'\right|^{d+2s}} \,
    \frac{{\rm d}w'}{\left|w'\right|^{d+\alpha}}
    \lesssim R^{-(d+2s+\alpha)},
\end{align*}
which yields (using $\beta=2s-\alpha$ and $\alpha>0$)
\begin{equation*}
  \left\| \cM^{-1} \Big[ \Delta_v^s \left( \cM\chi_R
    \right) - \left( \Delta_v^s \cM \right)
    \chi_R \Big] \textbf{1}_{\vert v \vert \leq R}
  \right\|_\beta^2
  \lesssim R^{-4s+\beta-\alpha}  \lesssim R^{-\alpha-\beta}.
\end{equation*} 
When $\vert v \vert \geq R$, we write
\begin{align*}
  & \Big[ \Delta_v^s \left( \cM \chi_R \right) -
  \left( \Delta_v^s \cM \right) \chi_R \Big](v) \\
  & = \int_{\R^d} \frac{\big[ \chi_R(v)-\chi_R(v') \big]}{\vert v - v'
  \vert^{d+2s}} \cM(v') \dd v' \\ 
  & = \int_{\vert v - v' \vert \leq \frac{|v|}{2}} \frac{\big[ \chi_R(v)-\chi_R(v') \big]}{\vert v - v'
  \vert^{d+2s}} \cM(v') \dd v' 
  + \int_{\vert v - v' \vert \geq \frac{|v|}{2}} \frac{\big[ \chi_R(v)-\chi_R(v') \big]}{\vert v - v'
  \vert^{d+2s}} \cM(v') \dd v'.
\end{align*}
Start with the first integral in the right hand side:
\begin{align*}
  & \left\vert \int_{\vert v - v' \vert \leq \frac{|v|}{2}}
    \frac{\big[ \chi_R(v)-\chi_R(v') \big]}{\left| v - v'\right|^{d+2s}}
    \cM(v') \dd v' \right\vert \\
  & \hspace{1cm} \lesssim \int_{\vert v - v' \vert \leq \frac{|v|}{2}}
    \frac{\sup_{B\left(v,\frac{|v|}{2}\right)} \left\vert
    \nabla_{v'}^2 \left[ \left(\chi_R(v)-\chi_R(v')\right)
    \cM(v') \right] \right\vert}{\left| v - v' \right|^{d+2s-2}}
    \dd v'\\
  & \hspace{1cm} \lesssim |v|^{2-2s}
    \sup_{B\left(v,\frac{|v|}{2}\right)} \left| \nabla_{v'}^2\left[
    \left(\chi_R(v)-\chi_R(v')\right) \cM(v') \right] \right|.
\end{align*}
One has
\begin{align*}
  &\sup_{B\left(v,\frac{|v|}{2}\right)}
    \left\vert D_{v'}^2\left( (\chi_R(v)-\chi_R(v')) \cM(v')
    \right) \right\vert \\
  &\lesssim \frac{|v|^{-d-\alpha}}{R^2} \sup_{v' \in
    B(v,\frac{|v|}{2})} \left\vert
    \chi''\left(\frac{v'}{R}\right) \right\vert + 
    \frac{|v|^{-d-\alpha-1}}{R} \sup_{v' \in B(v,\frac{|v|}{2})} \left\vert
    \chi'\left(\frac{v'}{R}\right) \right\vert + |v|^{-d-\alpha-2}.
\end{align*}
Consequently,
\begin{align*}
  &\left\vert \int_{\vert v - v' \vert \leq \frac{|v|}{2}}
    \frac{\big[ \chi_R(v)-\chi_R(v') \big]}{\vert v - v' \vert^{d+2s}}
    \cM(v') \, dv' \right\vert \\
  &\lesssim |v|^{2-2s} \left[
    \frac{|v|^{-d-\alpha}}{R^2} \sup_{v' \in B(v,\frac{|v|}{2})}
    \left\vert  \chi''\left(\frac{v'}{R}\right) \right\vert +
    \frac{|v|^{-d-\alpha-1}}{R} \sup_{v' \in B(v,\frac{|v|}{2})}
    \left\vert \chi'\left(\frac{v'}{R}\right) \right\vert
    + |v|^{-d-\alpha-2}\right]\\
  &\lesssim |v|^{-d-\alpha-2s} \left[ \frac{|v|^{2}}{R^{2}}
    \sup_{v' \in B(v,\frac{|v|}{2})} \left\vert  \chi''
    \left(\frac{v'}{R}\right) \right\vert+ \frac{|v|}{R}\sup_{v'
    \in B(v,\frac{|v|}{2})} \left\vert  \chi'
    \left(\frac{v'}{R}\right) \right\vert + 1 \right]
    \lesssim |v|^{-d-\alpha-2s},\\
\end{align*}
where we have used that $\chi'$ ans $\chi''$ have compact support
and $|v| \le 2 |v'|$ in this region.

Focus now on the second integral (using $\alpha > 0$)
\begin{equation*}
  \left\vert \int_{\vert v - v' \vert \geq  \frac{|v|}{2}}
    \frac{\big[ \chi_R(v)-\chi_R(v') \big]}{\vert v - v' \vert^{d+2s}} \cM(v')
    \dd v' \right\vert
  \leq \int_{|v - v'| \geq  \frac{|v|}{2}} \frac{\left\vert
      \chi_R(v) - \chi_R(v') \right\vert }{\left| v - v'
    \right|^{d+2s}} \cM(v') \dd v' \lesssim |v|^{-d-2s}.
\end{equation*}

As a conclusion,
\begin{align*}
  \left\| \cM^{-1} \Big[ \Delta_v^s \left(\cM\chi_R
  \right) - \left( \Delta_v^s \cM \right) \chi_R
  \Big] \textbf{1}_{\vert v \vert \geq R} \right\|_\beta^2
  \lesssim \int_{|v'| > R} |v'|^{2\alpha -4s}
  \wvp^{\beta-d-\alpha} \dd v' \lesssim R^{-\alpha-\beta},
\end{align*}
since $\beta = 2s - \alpha$. This concludes the proof.

\subsection{Proof of Hypothesis~\ref{hyp:scalinginfinite}}

The adjoint of $L$ is $L^* = \Delta_v^s - U \cdot \nabla_v$ and
following exactly the same arguments as in the proof of
Lemma~\ref{lem:ponctuel-phi} for the Fokker-Planck operator
yields
\begin{lemma}
  \label{lem:ponctuel-phi-lfp}
  The unique solution to the eigenvalue equation
  \begin{align*}
    - L^* \phi_{\eta}
      - i \eta (v \cdot \sigma) \phi_{\eta}= \mu(\eta)
      \wv^{-\beta} \phi_{\eta} \quad \text{ with } \quad 
      \int_{\R^d} \phi_{\eta}(v) \, \wv^{-\beta} \, \cM(v) \dd v = 1
  \end{align*}
  satisfies for any $R \ge 1$,
  \begin{align*}
    \| \phi_\eta \|_{L^\infty(B(0,R))} \lesssim_R 1 \quad \text{ and }
    \quad \| \Im \phi_\eta \|_{L^\infty(B(0,R))} \lesssim_R
    \max(\eta,\mu(\eta)),
  \end{align*}
  with constants depending only on $R$ and uniform in
  $\eta \to 0$.
\end{lemma}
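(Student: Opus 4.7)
The argument follows the blueprint of Lemma~\ref{lem:ponctuel-phi}, with classical elliptic regularity replaced by its nonlocal counterpart for $\Delta_v^s$. First, I would extract the $L^2$-type bound from coercivity: testing the eigenvalue equation against $\overline{\phi_\eta}\cM$, taking the real part, and using Hypothesis~\ref{hyp:coercivity} together with $\cP\phi_\eta = 1$ gives
\[
\lambda \|\phi_\eta - 1\|_{-\beta}^2 \le \mu(\eta)\,\|\phi_\eta\|_{-\beta}^2,
\]
so $\|\phi_\eta\|_{-\beta} \le \lambda/(\lambda-\mu(\eta)) \lesssim 1$ uniformly for $\eta$ small enough.

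Second, I would upgrade this weighted-$L^2$ control to the pointwise bound $\|\phi_\eta\|_{L^\infty(B(0,R))}\lesssim_R 1$. Since $U(v) = \mathbb{U}(v)\wv^{-\beta}v$ is smooth and bounded on any ball, and $s\in(\tfrac12,1)$ so that the drift $U\cdot\nabla_v$ is subcritical with respect to $\Delta_v^s$, the equation on $\phi_\eta$ can be written as a fractional elliptic equation with bounded lower-order perturbation and bounded right-hand side (from the $i\eta(v\cdot\sigma)$ and $\mu(\eta)\wv^{-\beta}$ multipliers, both small). Local boundedness estimates for such nonlocal elliptic equations (De Giorgi--Nash--Moser type bounds, see e.g.\ Silvestre / Caffarelli--Silvestre) yield $L^\infty$ control on balls from any $L^2$-local bound, provided the nonlocal tail is controlled; and the tail is precisely controlled by the weighted $L^2$ bound from the first step, since $\wdot^{-\beta}\cM$ has heavy-tailed integrability matching the kernel $|v-v'|^{-d-2s}$ at infinity.

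Third, I would repeat the scheme on $\Im\phi_\eta$. Using $\cP\phi_\eta = 1\in\R$ gives $\cP(\Im\phi_\eta)=0$, so coercivity implies $\|\Im\phi_\eta\|_{-\beta}\lesssim\mu(\eta)$. Taking the imaginary part of the eigenvalue equation produces
\[
-L^*(\Im\phi_\eta) - \mu(\eta)\wv^{-\beta}\Im\phi_\eta = \eta(v\cdot\sigma)\Re\phi_\eta,
\]
whose right-hand side is bounded in $L^2(B(0,R))$ by $C_R\eta$ thanks to the $L^\infty$ bound on $\Re\phi_\eta$ from step two. Applying the same fractional local boundedness machinery to this linear equation, with right-hand size of order $\max(\eta,\mu(\eta))$ and $L^2$-weighted norm of order $\mu(\eta)$, gives $\|\Im\phi_\eta\|_{L^\infty(B(0,R))}\lesssim_R \max(\eta,\mu(\eta))$.

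\textbf{Main obstacle.} The key technical point is the use of local regularity for the nonlocal operator $L^*$: unlike the classical Fokker--Planck case, an $L^\infty$ bound on $B(0,R)$ cannot be obtained purely from the equation inside $B(0,R)$ because $\Delta_v^s$ couples all of $\R^d$. One must carefully split the nonlocal integral into a near-field part handled by fractional elliptic regularity and a far-field tail controlled by the global $\|\phi_\eta\|_{-\beta}$-bound; the integrability of $\wdot^{-\beta}\cM$ and the condition $s>\tfrac12$ (ensuring the drift is lower order) make this decomposition work uniformly as $\eta\to 0$.
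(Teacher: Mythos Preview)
Your proposal is correct and matches the paper's approach exactly: the paper simply states that the proof follows the same arguments as Lemma~\ref{lem:ponctuel-phi} (coercivity for the weighted $L^2$ bound, then local elliptic regularity, then the same two steps for $\Im\phi_\eta$), citing a fractional local regularity result in place of classical elliptic estimates. You have in fact spelled out more detail than the paper does, correctly flagging the nonlocal tail control and the subcriticality of the drift for $s>\tfrac12$ as the technical points that make the transposition work.
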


Note that local fractional ellipticity results are present in \cite{serra_regularity_2016}.
We now come to the pointwise estimates on the rescaled
eigenvector. This is when $\alpha \leq 2 + \beta$, that is
$\alpha \leq 1 + s$. Observe indeed that when $\alpha > 1 + s$,
the scaling is diffusive, and the diffusion coefficient is
obtained by solving
\begin{equation*}
  \Delta_v^s(\cM F) +
  \nabla_v\cdot\left( U\,\cM F\right) = - (v \cdot \sigma) \cM(v),
\end{equation*}
with $\int_{\R^d} F(v)\, \cM_\beta(v) \dd v =0$.
\begin{lemma}
  \label{lem:ponctuel-Phi-levy}
  Assume $s \in (\frac12,1)$. There are
  $\eta_1 \in (0,\eta_0)$ small enough and $A$ and $C$ large
  enough so that
  \begin{align*}
    \forall \, \eta \in (0,\eta_1), \ \forall \, u \in \R^d, \quad
    \vert \Phi_\eta(u) \vert \lesssim \vert u \vert_\eta^{C \mu(\eta)}
    \quad \text{ and } \quad
    \vert \Im \Phi_\eta (u) \vert \lesssim \vert u
    \vert_\eta^{\min(1,\alpha)+\beta - C \mu(\eta)}.
  \end{align*}
\end{lemma}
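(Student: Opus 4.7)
The plan is to mirror the comparison-principle strategy of Lemma \ref{lem:ponctuel-Phi}, replacing the local second-order operator by the nonlocal $\Delta^s$ and using Kato-type inequalities to control moduli and imaginary parts. First I would write the rescaled equation satisfied by $\Phi_\eta(u) = \phi_\eta(\eta^{-1/(1+\beta)}u)$. Because $v\cdot \nabla_v = u\cdot\nabla_u$ under this scaling and $\Delta_v^s$ produces a factor $\eta^{2s/(1+\beta)}$, after multiplying by $|u|_\eta^{\beta}/\eta^{\beta/(1+\beta)}$ one obtains a Schrödinger-type integro-differential equation of the schematic form
\begin{equation*}
 - |u|_\eta^{\beta}\bigl(\eta^{2s/(1+\beta)-\beta/(1+\beta)}\Delta_u^s \Phi_\eta\bigr) + \tilde U_\eta(u)\cdot \nabla_u \Phi_\eta - i(u\cdot\sigma)|u|_\eta^{\beta}\Phi_\eta = \mu(\eta)\Phi_\eta,
\end{equation*}
where $\tilde U_\eta$ converges to a radial drift of the form $c\,|u|^{-\beta}u$ modulo the $\mathbb{U}$-correction, uniformly on $\{|u| \ge A\eta^{1/(1+\beta)}\}$. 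The condition $s>\tfrac12$ ensures $2s>1$, so the diffusion dominates the transport term at any scale, which is the structural reason the following comparisons work.

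For the first estimate, I would multiply the rescaled equation by $\overline{\Phi_\eta}/|\Phi_\eta|$, take real parts, and invoke the Kato inequality for the fractional Laplacian, $\Re(\overline{\Phi_\eta}/|\Phi_\eta|\,\Delta^s\Phi_\eta)\le \Delta^s|\Phi_\eta|$, to deduce
\begin{equation*}
 -|u|_\eta^{\beta}\,\mathfrak{D}_\eta^s |\Phi_\eta| + \tilde U_\eta\cdot\nabla_u |\Phi_\eta| - \mu(\eta)|\Phi_\eta| \le 0,
\end{equation*}
with $\mathfrak{D}_\eta^s$ denoting the appropriately rescaled fractional Laplacian. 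I would then check that $F(u):=|u|_\eta^{C\mu(\eta)}$ is a supersolution of this inequality on $\{|u|\ge A\eta^{1/(1+\beta)}\}$ for $A, C$ large and $\eta$ small: at such $u$, $|u|_\eta\sim|u|$, and $\mathfrak{D}_\eta^s F$ together with the drift contribute $O(\mu(\eta))|u|^{-2s}F$ and $O(\mu(\eta))|u|^{-\beta}F$, both negligible compared with $|u|_\eta^\beta F$ absorbed by $(d+\alpha)C\mu(\eta)F$ and the $-\mu(\eta)F$ term. Here explicit computation of $\mathfrak{D}_\eta^s(|u|_\eta^{C\mu(\eta)})$ via the homogeneity of $\Delta^s$ applied to $|u|^{C\mu(\eta)}$ (which produces a constant times $|u|^{C\mu(\eta)-2s}$) will be the main technical step. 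The nonlocal maximum principle on the exterior domain together with the $L^\infty$ bound from Lemma \ref{lem:ponctuel-phi-lfp} on $\{|u|\le A\eta^{1/(1+\beta)}\}$ then yields $|\Phi_\eta|\lesssim F$ globally, after extending by using $\eta^{C\mu(\eta)/(1+\beta)}\sim 1$.

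For the second estimate, I would take the imaginary part of the rescaled equation: the forcing is $(u\cdot\sigma)|u|_\eta^\beta\Re\Phi_\eta$, controlled in modulus by $|u|_\eta^{1+\beta+C\mu(\eta)}$ thanks to the first step. Applying Kato's inequality to $\Im\Phi_\eta$ I would obtain
\begin{equation*}
 -|u|_\eta^{\beta}\mathfrak{D}_\eta^s |\Im\Phi_\eta| + \tilde U_\eta\cdot\nabla_u |\Im\Phi_\eta| - \mu(\eta)|\Im\Phi_\eta| \le |u|_\eta^{1+\beta+C\mu(\eta)}.
\end{equation*}
Then I would test the ansatz $G(u):=|u|_\eta^{\mathtt{e}}$ with $\mathtt{e}=\min(1,\alpha)+\beta-C\mu(\eta)$, compute $\mathfrak{D}_\eta^s G$ again by homogeneity of $\Delta^s$ on power functions, and verify that in the range $|u|\in[A\eta^{1/(1+\beta)},1]$ the leading contribution of $-|u|_\eta^\beta \mathfrak{D}_\eta^s G$ is a positive multiple of $|u|_\eta^{\mathtt{e}-2s+\beta}=|u|_\eta^{\mathtt{e}+\alpha-2\beta}$… I would choose the explicit constant in front so as to dominate both the drift term and the right-hand side $|u|_\eta^{1+\beta+C\mu(\eta)}$. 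The key inequality $\mathtt{e}+2s-\beta\le 1+\beta+C\mu(\eta)$, equivalent to $\min(1,\alpha)\le 1$, holds by definition, so $G$ is a supersolution. Matching at $|u|=A\eta^{1/(1+\beta)}$ through the bound $|\Im\phi_\eta|\lesssim \max(\eta,\mu(\eta))$ of Lemma \ref{lem:ponctuel-phi-lfp}, and noting that $\max(\eta,\mu(\eta))\lesssim\eta^{\mathtt{e}/(1+\beta)}$ for $\mathtt{e}\le 1+\beta$, extends the bound to all $u\in\R^d$ by the comparison principle.

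The hard part will be the rigorous handling of the nonlocal operator in the exterior maximum principle: unlike the local case of Lemma \ref{lem:ponctuel-Phi}, the fractional Laplacian sees the behavior of $|\Phi_\eta|$ on the interior ball, so one must control the contribution of $\int_{|v'|\le A\eta^{1/(1+\beta)}}(\ldots)$ in the definition of $\mathfrak{D}_\eta^s G(u)$. Since that region has small measure after rescaling and $|\Phi_\eta|$ is uniformly bounded there by Lemma \ref{lem:ponctuel-phi-lfp}, this term is $O(|u|^{-d-2s})$ and therefore absorbed, but verifying this cleanly is the main technical obstacle. The restriction $s>\tfrac12$ enters exactly because the drift $\tilde U_\eta\cdot\nabla_u$ has order $1-\beta$ in our rescaled equation and must stay subleading compared to $\mathfrak{D}_\eta^s$ of order $2s-\beta$.
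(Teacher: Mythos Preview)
Your overall architecture is exactly the paper's: apply Kato's inequality for $\Delta^s$ to get a differential inequality for $|\Phi_\eta|$ (resp.\ $|\Im\Phi_\eta|$), build the barrier $F(u)=|u|_\eta^{C\mu(\eta)}$ (resp.\ $G(u)=|u|_\eta^{\mathtt e}$), verify the supersolution property on $\{|u|\ge A\eta^{1/(1+\beta)}\}$, and match with Lemma~\ref{lem:ponctuel-phi-lfp} on the inner ball. So the proposal is correct in approach. Two technical points diverge from the paper and deserve correction.

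First, your computation of $\Delta_u^s(|\cdot|_\eta^{\gamma})$ ``via the homogeneity of $\Delta^s$ applied to $|u|^\gamma$'' is not the right tool, because $|u|_\eta$ is not homogeneous. The paper instead uses the \emph{exact} scaling identity $|u|_\eta^\gamma=\eta^{\gamma/(1+\beta)}\langle u\eta^{-1/(1+\beta)}\rangle^\gamma$ to reduce to estimating $\Delta_v^s\langle v\rangle^\gamma$ for large $v$, and then splits the singular integral into $|v-v'|<|v|/2$ (second-order Taylor) and $|v-v'|>|v|/2$ (first-order bound). This gives the clean estimate $\Delta_u^s(|\cdot|_\eta^\gamma)(u)\lesssim \gamma\,|u|_\eta^{\gamma-2s}$ without any approximation. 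Your homogeneous formula $\Delta^s|u|^\gamma=c_\gamma|u|^{\gamma-2s}$ would require a separate argument to transfer to $|u|_\eta^\gamma$, and it is more delicate since the nonlocal operator sees the difference near the origin.

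Second, you misidentify both the dominant mechanism in the supersolution and the role of $s>\tfrac12$. After multiplying by $|u|_\eta^\beta$, the rescaled diffusion carries the small prefactor $\eta^{\alpha/(1+\beta)}$ and contributes at most $O(A^{-\alpha})\cdot C\mu(\eta)|u|_\eta^{C\mu(\eta)}$ on the exterior; it is the \emph{drift} term $\mathbb U_\eta(u)\,u\cdot\nabla_u F\simeq C\mu(\eta)(\inf\mathbb U_\eta)|u|_\eta^{C\mu(\eta)}$ that provides the positive lower bound making $F$ a supersolution, with diffusion and $-\mu(\eta)F$ absorbed for $A,C$ large. The restriction $s>\tfrac12$ does not enter as a ``diffusion dominates transport'' scaling condition; it is used precisely in the far-field piece of the integral estimate for $\Delta_v^s\langle v\rangle^{C\mu(\eta)}$, to make $\int_{|v-v'|>|v|/2}|v-v'|^{1-d-2s}\,dv'$ converge. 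With these two corrections, your sketch becomes the paper's proof.
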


\begin{proof}
  The rescaled equation for $\Phi_{\eta}$ is, using
  $U_\eta(u) :=\eta^{\frac{1-\beta}{1+\beta}}
  U(u\eta^{-\frac{1}{1+\beta}})$ and $2s-\beta = \alpha$,
  \begin{align*}
    - \eta^{\frac{\alpha}{1+\beta}} \Delta_u^s
    \Phi_{\eta}
    + U_\eta(u) \cdot \nabla_u \Phi_{\eta}
    - i (u\cdot \sigma) \Phi_{\eta}=
    \mu(\eta)|u|_\eta^{-\beta} \Phi_{\eta}.
  \end{align*}

  Multiply this equation by
  $\frac{\overline{\Phi_\eta}}{\vert \Phi_\eta \vert}$ and take
  the real part:
  \begin{equation*}
    - \eta^{\frac{\alpha}{1+\beta}} \Re
    \left( \frac{\overline{\Phi_\eta}}{\vert \Phi_\eta \vert}
      \Delta_u^s \Phi_{\eta}  \right)
    + U_\eta(u) \cdot \Re\left(\frac{\overline{\Phi_\eta}}{\vert
        \Phi_\eta \vert} \nabla_u \Phi_\eta \right) =
    \mu(\eta)|u|_\eta^{-\beta}  \vert \Phi_\eta \vert.
  \end{equation*}
  Using the Kato inequality
  $\Delta_u^s \vert \Phi_\eta \vert \geq \Re \left(
    \frac{\overline{\Phi_\eta}}{\vert \Phi_\eta \vert} \Delta_u^s
    \Phi_{\eta} \right)$ (see~\cite{BrPo} for the Laplacian
  and~\cite{ChVe} for the fractional Laplacian), one gets
  \begin{align*}
    - \eta^{\frac{\alpha}{1+\beta}} |u|_\eta^{\beta}
    \Delta_u^s \vert \Phi_\eta \vert + |u|_\eta^{\beta}
    U_\eta(u) \cdot \nabla_u \vert \Phi_\eta \vert
    - \mu(\eta) \vert \Phi_\eta \vert \leq 0.
  \end{align*}
  Then observe that the real function
  $F(u) = |u|_\eta^{C\mu(\eta)}$ satisfies for
  $|u| \ge A \eta^{\frac{1}{3}}$:
  \begin{align*}
    &- \eta^{\frac{\alpha}{1+\beta}} |u|_\eta^{\beta}
      \Delta_u^s F
      + |u|^\beta_\eta U_\eta(u) \cdot \nabla_u F - \mu(\eta) F\\
    &=  - \eta^{\frac{\alpha}{1+\beta}} |u|_\eta^{\beta}
      \Delta_u^s  F
      + C \mu(\eta) |u|_\eta^{C\mu(\eta)-2}
      \mathbb{U}_\eta(u) \, \vert u \vert^2
      - \mu(\eta) |u|_\eta^{C\mu(\eta)}
  \end{align*}
  where we have used that
  $U_\eta(u) = |u|_\eta ^{-\beta} \mathbb{U}_\eta(u) u $ with
  some $\mathbb{U}_\eta$ positive bounded from below
  (independently of $\eta$). We now estimate
  $\Delta_u^s \left( |\cdot|_\eta^{C\mu(\eta)} \right)(u)$. By
  scaling:
  \begin{equation*}
    \forall \, u \in \R^d, \quad
    \Delta_u^s \left( |\cdot|_\eta^{C\mu(\eta)} \right)(u) =
    \eta^{\frac{C\mu(\eta)-2s}{1+\beta}} \Delta_v^s \left(
      \wdot^{C\mu(\eta)} \right)\left(u \eta^{- \frac{1}{1+\beta}}\right).
  \end{equation*}
  We then estimate $\Delta_v^s \left( \wdot^{C\mu(\eta)} \right)$
  using
  \begin{align*}
    & \Delta_v^s \left( \wdot^{C\mu(\eta)} \right)(v)
      = C_{d,s} \int_{\R^d}
      \frac{\wvp^{C\mu(\eta)}-\wv^{C\mu(\eta)}}{\vert v' - v
      \vert^{d+2s}} \dd v',\\
    & \quad = C_{d,s}
      \int_{\vert v - v' \vert < \frac{|v|}{2}}
      \frac{\wvp^{C\mu(\eta)}-\wv^{C\mu(\eta)}}{\vert v' - v
      \vert^{d+2s}} \dd v' + C_{d,s}
      \int_{\vert v - v' \vert > \frac{|v|}{2}}
      \frac{\wvp^{C\mu(\eta)}-\wv^{C\mu(\eta)}}{\vert v' - v
      \vert^{d+2s}} \dd v'.
  \end{align*}  
  To control the first term in the right hand side, use that
  \begin{equation*}
    \wvp^{C\mu(\eta)}-\wv^{C\mu(\eta)} - \nabla_v\left(
      \wdot^{C\mu(\eta)} \right) \cdot (v'-v) \lesssim C \mu(\eta)
    \wv^{C\mu(\eta)-2}\vert v' - v \vert^{2}
  \end{equation*} 
  to get
  \begin{multline*}
    \int_{\vert v - v' \vert < \frac{|v|}{2}}
    \frac{\wvp^{C\mu(\eta)}-\wv^{C\mu(\eta)}}{\vert v' - v
    \vert^{d+2s}}\dd v'
    \lesssim \int_{\vert v - v' \vert < \frac{|v|}{2}}
    \frac{C \mu(\eta) \wv^{C\mu(\eta)-2}\vert v' - v
    \vert^{2}}{\vert v' - v \vert^{d+2s}}\dd v'\\
    \lesssim C \mu(\eta) \wv^{C\mu(\eta)-2} \int_{\vert v - v'
    \vert < \frac{|v|}{2}} \vert v' - v \vert^{2-2s-d} \dd v' 
    \lesssim C \mu(\eta) \wv^{C\mu(\eta)-2s}.
  \end{multline*}
  To control the second term, use that
  \begin{equation*}
    \left( \wvp^{C\mu(\eta)}-\wv^{C\mu(\eta)} \right) \lesssim C \mu(\eta)
    \wv^{C\mu(\eta)-1} \vert v' - v \vert
  \end{equation*} 
  to get (using here $s>\frac12$)
  \begin{align*}
    \int_{\vert v - v' \vert > \frac{|v|}{2}}
    \frac{\wvp^{C\mu(\eta)}-\wv^{C\mu(\eta)}}{\vert v' - v
    \vert^{d+2s}}\dd v'
    &\lesssim \int_{\vert v - v' \vert > \frac{|v|}{2}}
      \frac{C \mu(\eta) \wv^{C\mu(\eta)-1}\vert v' - v \vert}{\vert
      v' - v \vert^{d+2s}}\dd v'\\
    &\lesssim C \mu(\eta) \wv^{C\mu(\eta)-1} \int_{\vert v - v' \vert
      > \frac{|v|}{2}} \frac{\dd v'}{\vert v' - v \vert^{d+2s-1}}
    \\
    & \lesssim C \mu(\eta) \wv^{C\mu(\eta)-2s}.
  \end{align*}
  We therefore have (using the scaling)
  \begin{align*}
    \Delta_v^s \left( \wdot^{C\mu(\eta)} \right)(v)
    \lesssim C \mu(\eta) \wv^{C\mu(\eta)-2s} \quad
    \Longrightarrow \quad
    \Delta_u^s \left( |\cdot|_\eta^{C\mu(\eta)} \right)(u)
    \lesssim C \mu(\eta) |u|_\eta ^{C\mu(\eta)-2s}.
  \end{align*}

  This estimate implies, for some absolute constant $C_0>0$,
  \begin{align*}
    \eta^{\frac{\alpha}{1+\beta}} |u|_\eta^{\beta}\Delta_u^s F 
    & \le C_0 C \mu(\eta)\eta^{\frac{\alpha}{1+\beta}}
      |u|_\eta^{C\mu(\eta)+\beta-2s} \\
    & \le C_0 C \mu(\eta)|u|_\eta^{C\mu(\eta)}
      \eta^{\frac{\alpha}{1+\beta}}
      |u|_\eta^{-\alpha} 
      \lesssim C_0 C \mu(\eta)|u|_\eta^{C\mu(\eta)}
      \left( 1 + A^2 \right)^{-\frac{\alpha}2}
  \end{align*}
  in the region $\vert u \vert \geq
  A\eta^{\frac{1}{1+\beta}}$. As a consequence
  \begin{align*}
    &- \eta^{\frac{\alpha}{1+\beta}} |u|_\eta^{\beta}
      \Delta_u^s F + 
      \mathbb{U}_\eta(u) u \cdot \nabla_u F 
      - \mu(\eta) F \\
    & = - \eta^{\frac{\alpha}{1+\beta}} |u|_\eta^{\beta}
      \Delta_u^s F + C \mu(\eta)
      |u|_\eta^{C\mu(\eta)-2}
      \mathbb{U}_\eta(u) \,
      \vert u \vert^2 - \mu(\eta) |u|_\eta^{C\mu(\eta)}\\
    &\geq - C_0 C\mu(\eta)|u|_\eta^{C\mu(\eta)} \left(1 +
      A^2\right)^{-\frac{\alpha}2}
      + C \mu(\eta) |u|_\eta^{C\mu(\eta)-2} \left( \inf \mathbb{U}_\eta
      \right) \,
      \vert u \vert^2 - \mu(\eta) |u|_\eta^{C\mu(\eta)}\\
    &\geq C\mu(\eta) |u|_\eta^{C\mu(\eta)}
      \left[ -C_0 \left(1 + A^2 \right)^{-\frac{\alpha}2}+  |u|_\eta^{-2}
      \left( \inf \mathbb{U}_\eta \right)\, \vert u \vert^2 -
      C^{-1}\right] \\
    &\geq C\mu(\eta) |u|_\eta^{C\mu(\eta)}
      \left[ -C_0 \left(1 + A^2 \right)^{-\frac{\alpha}2} +
      \left(1+A^{-2} \right)^{-1}
      \left( \inf \mathbb{U}_\eta \right)\, - C^{-1}\right] \geq 0
  \end{align*}
  for $A$ and $C$ sufficiently large, and we deduce
  $|\Phi_\eta| \lesssim F$ on
  $|u| \ge A \eta^{\frac{1}{1+\beta}}$ and, for the same reasons
  as for the Fokker-Planck operator, the bound extends to any
  $u \in \R^d$.

  Taking now the imaginary part of the equation, one gets 
  \begin{align*}
    - \eta^{\frac{\alpha}{1+\beta}} |u|_\eta^{\beta}
    \Delta_u^s \vert \Im \Phi_\eta \vert
    + \mathbb{U}_\eta(u) u \cdot \nabla_u \vert
    \Im \Phi_\eta \vert - \mu(\eta) \vert \Im\Phi_\eta \vert
    \lesssim  |u|_\eta^{1+\beta + C\mu(\eta)}.
  \end{align*}
  Define then
  $\gamma := \min(\alpha,1)+\beta - C\mu(\eta) \in \, (0,2s)$ and
  the real function $G(u) := |u|_\eta^\gamma$. Note that
  $\gamma \in (0,2s)$ for $\eta$ small enough, which implies that
  $\Delta_u^s G$ makes sense. Write for
  $|u| \ge A \eta^{\frac{1}{3}}$
  \begin{align*}
    - \eta^{\frac{\alpha}{1+\beta}} |u|_\eta^{\beta}
    \Delta_u^s
    G + |u|^\beta_\eta U_\eta(u) \cdot \nabla_u G - \mu(\eta) G
    =  - \eta^{\frac{\alpha}{1+\beta}} |u|_\eta^{\beta}
    \Delta_u^s G
    + \gamma |u|_\eta^{\gamma-2} \mathbb{U}_\eta(u) \, \vert u
    \vert^2 - \mu(\eta) G.
  \end{align*}
  Let us estimate
  $\Delta_u^s \left( |\cdot|_\eta^{\gamma} \right)(u)$. Note that
  by scaling
  \begin{align*}
    \forall \, u \in \R^d, \quad
    \Delta_u^s \left( |\cdot|_\eta^{\gamma} \right)(u) =
    \eta^{\frac{\gamma-2s}{1+\beta}} \Delta_v^s \left(
    \wdot^{\gamma} \right)(u \eta^{- \frac{1}{1+\beta}} ).
  \end{align*}
  One then estimates $\Delta_v^s \left( \wdot^{\gamma} \right)$
  using
  \begin{align*}
    \Delta_v^s \left( \wdot^{\gamma} \right)(v)
    &= C_{d,s} \int_{\R^d}
      \frac{\left( \wvp^{\gamma}-\wv^{\gamma} \right)}{\vert v' - v
      \vert^{d+2s}} \dd v',\\
    &= C_{d,s} \int_{\vert v - v' \vert < \frac{|v|}{2}}
      \frac{\left( \wvp^{\gamma}-\wv^{\gamma} \right)}{\vert v' - v
      \vert^{d+2s}}\dd v'
      + C_{d,s}  \int_{\vert v - v' \vert > \frac{|v|}{2}}
      \frac{\left( \wvp^{\gamma}-\wv^{\gamma} \right)}{\vert v' - v
      \vert^{d+2s}} \dd v'.  
  \end{align*}
  Small $v$'s are fine since
  $\Delta_v^s \left( \wdot^{\gamma} \right)$ is locally
  bounded. Continue with large $v$. In the first integral,
  \begin{align*}
    \int_{\vert v - v' \vert < \frac{|v|}{2}}
    \frac{\left( \wvp^{\gamma}-\wv^{\gamma}
    \right)}{\vert v' - v \vert^{d+2s}}\dd v'
    & = \int_{\vert v - v' \vert < \frac{|v|}{2}}
      \frac{\wvp^{\gamma}-\wv^{\gamma} - \nabla_v\left(
      \wdot^\gamma \right)(v)(v'-v) }{\vert v' - v
      \vert^{d+2s}}\dd v' \\
    &\lesssim \int_{\vert v - v' \vert < \frac{|v|}{2}}
      \frac{\sup_{z \in B(v,\frac{|v|}{2})} \left\vert \nabla^2 \left(
      \wdot^\gamma \right)(z) \right\vert \vert v' - v
      \vert^{2}}{\vert v' - v \vert^{d+2s}}\dd v'
    \lesssim \vert v \vert^{\gamma - 2s}.
  \end{align*}  
  The second integral may be estimated from above using that
  $\vert v - v' \vert > \frac{\vert v \vert}{2}$ implies
  $\vert v - v' \vert > \frac{\vert v' \vert}{3}$,
  \begin{align*}
    \int_{\vert v - v' \vert > \frac{|v|}{2}}
    \frac{\wvp^{\gamma}-\wv^{\gamma}}{\vert v' - v \vert^{d+2s}}
    \dd v'
    & \lesssim \int_{\vert v - v' \vert > \frac{|v|}{2}}
      \frac{\lfloor v - v' \rceil^\gamma}{\vert v' - v
      \vert^{d+2s}}\dd v' \lesssim \vert v \vert^{\gamma - 2s}.
  \end{align*}  
  From this, we deduce
  $\Delta_u^s ( |\cdot|_\eta^{\gamma} )(u) \lesssim
  \eta^{\frac{\gamma-2s}{1+\beta}} \lfloor u \eta^{-
    \frac{1}{1+\beta}} \rceil^{\gamma-2s} = |u|_\eta^{\gamma-2s}$
  which implies
  \begin{align*}
    \eta^{\frac{\alpha}{1+\beta}} |u|_\eta^{\beta}\Delta_u^s G
    \lesssim \eta^{\frac{\alpha}{1+\beta}} |u|_\eta^{\gamma +
    \beta -2s}
    \lesssim \left( 1 + A^2 \right)^{-\frac{\alpha}2}
    |u|_\eta^{\gamma}
  \end{align*}
  in the region $\vert u \vert \geq
  A\eta^{\frac{1}{1+\beta}}$. As a consequence, as previously,
  \begin{align*}
    - \eta^{\frac{\alpha}{1+\beta}} |u|_\eta^{\beta}
    \Delta_u^s
    G + |u|^\beta_\eta U_\eta(u) \cdot \nabla_u G - \mu(\eta) G
    \gtrsim |u|_\eta ^{\gamma}
  \end{align*}
%  \textcolor{red}{CM~: je converge aussi sur ce calcul, mais
%    peux-tu le revérifier de ton côté~? Donne-t-on plus de
%    détails ici~?}  
    for $A$ sufficiently large and we deduce
  $|\Im \Phi_\eta| \lesssim G$ on
  $|u| \ge A \eta^{\frac{1}{1+\beta}}$ and, for the same reasons
  as for the Fokker-Planck operator, the bound extends to any
  $u \in \R^d$.
\end{proof}

\subsection{Rescaled drift force and limit equation}

We formally discuss the behaviour of the force $U_\eta$ when
$\eta$ goes to $0$: setting $v= u\eta^{- \frac{1}{1+\beta}}$
gives the equation
\begin{equation*}
  \eta^{\frac{a -\beta}{1+\beta}}\Delta_v^s \cM_\eta
  + \nabla_v\cdot\left(U_\eta\,\cM_\eta\right) = 0.
\end{equation*} 
Observe that when $u \neq 0$,
\begin{align*}
  & \eta^{\frac{\alpha}{1+\beta}}\Delta_v^s \cM_\eta (u)
  = -c_{\alpha,\beta} C_{d,s}  \eta^{\frac{\alpha}{1+\beta}} \int_{\R^d}
    \frac{\left( \vert u \vert_\eta^{-d-\alpha} - \vert u'
    \vert_\eta^{-d-\alpha}\right)}{\vert u - u' \vert^{d+2s}} \dd u'\\
  &= -c_{\alpha,\beta} C_{d,s} \eta^{\frac{\alpha}{1+\beta}}
    \int_{B(0,\varepsilon)} \frac{\left( \vert u \vert_\eta^{-d-\alpha} - \vert u'
    \vert_\eta^{-d-\alpha}\right)}{\vert u - u' \vert^{d+2s}} \dd u' 
   -c_{\alpha,\beta} C_{d,s} \eta^{\frac{\alpha}{1+\beta}}
    \int_{B(0,\varepsilon)^c} \frac{\left(\vert u \vert_\eta^{-d-\alpha} - \vert
    u' \vert_\eta^{-d-\alpha}\right)}{\vert u - u' \vert^{d+2s}} \dd u'.
\end{align*}
The second term in the right hand side goes to zero as
$\eta \to 0$ since the singularity around zero has been removed
from the integration domain. To deal with the first term,
decompose
\begin{equation*}
  \eta^{\frac{\alpha}{1+\beta}}
  \int_{B(0,\varepsilon)} \frac{\left(\vert u \vert_\eta^{-d-\alpha} - \vert u'
  \vert_\eta^{-d-\alpha}\right)}{\vert u - u' \vert^{d+2s}} \dd u' 
  = \eta^{\frac{\alpha}{1+\beta}}
  \int_{B(0,\varepsilon)} \frac{\vert u \vert_\eta^{-d-\alpha} }
  {\vert u - u' \vert^{d+2s}} \dd u' -
  \eta^{\frac{\alpha}{1+\beta}}
  \int_{B(0,\varepsilon)} \frac{\vert u'
  \vert_\eta^{-d-\alpha}}{\vert u - u' \vert^{d+2s}} \dd u'.
\end{equation*}
The first part goes to zero if $\varepsilon < |u|$. The second part
is written as
\begin{align*}
  \eta^{\frac{\alpha}{1+\beta}}
  \int_{B(0,\varepsilon)} \frac{\vert u'
  \vert_\eta^{-d-\alpha}}{\vert u - u' \vert^{d+2s}} \dd u' 
  & \sim_\varepsilon \vert u \vert^{-d-2s} 
    \eta^{\frac{\alpha}{1+\beta}}
    \int_{B(0,\varepsilon)}  \vert u'
    \vert_\eta^{-d-\alpha} \dd u' \\
  & = \vert u \vert^{-d-2s}
    \int_{B\left(0,\varepsilon \eta^{-\frac{\alpha}{1+\beta}}\right)}
    (1 + \vert v'\vert^2)^{-d-\alpha} \dd v'.
\end{align*}
Taking $\eta$ small then $\varepsilon$ small yields
\begin{equation*}
  \lim_{\eta \to 0, \varepsilon \to 0} \left( -c_{\alpha,\beta}
  \eta^{\frac{\alpha}{1+\beta}}
  \int_{B(0,\varepsilon)} \frac{\vert u \vert_\eta^{-d-\alpha} - \vert u'
  \vert_\eta^{-d-\alpha}}{\vert u - u' \vert^{d+2s}} \dd u'
  \right) 
  =  \frac{c_{\alpha,\beta}}{c_{\alpha,0}} \cdot
  \frac{1}{\vert u \vert^{d+2s}}.
\end{equation*}
Since
$\nabla_v( \vert v \vert^{-d-2s} v) = - 2s \vert v
\vert^{-d-2s}$, we deduce that
\begin{equation*}
  \lim_{\eta \to 0} \eta^{\frac{1-\beta}{1+\beta}}
  U\left(u\eta^{-\frac{1}{1+\beta}}\right) = U_\infty(u)  =
  \frac{c_{\alpha,\beta}}{2s c_{\alpha,0}} \frac{u}{\vert u
    \vert^{d+2s}} \vert u \vert^{d+\alpha} =
  \frac{c_{\alpha,\beta}}{2s c_{\alpha,0}} \vert u \vert^{-\beta} u.
\end{equation*}
This proves the scaling limit of the drift force.

From the rescaled equation for $\Phi_{\eta}$, we deduce that
$\Phi_\eta$ goes to $\Phi$, where $\Phi$ solves,
\begin{equation*}
  \frac{c_{\alpha,\beta}}{2 as c_{\alpha,0}}
  \frac{u }{\vert u \vert^{\beta}} \cdot \nabla_u \Phi
  - i (u\cdot \sigma) \Phi = 0 \quad \text{with}\quad
  \Phi(0) = 1 \quad \Longrightarrow \quad 
  \Phi(u) := \exp\left(i\frac{2s c_{\alpha,0}}{c_{\alpha,\beta}}
  \frac{|u|^\beta (u \cdot \sigma)}{1+\beta}\right). 
\end{equation*}

Thus, in the limit case $\alpha=1+s=2+\beta$,
$\Omega(u) = \lim_{\lambda \to 0, \ \lambda \not = 0} \frac{\Im
  \Phi \left( \lambda u \right)}{\lambda^{1+\beta}}$ satisfies,
\begin{equation*}
  \frac{c_{\alpha,\beta}}{2a c_{\alpha,0}} u \cdot \nabla_u \Omega
  =  (u\cdot \sigma)\vert u \vert^{\beta} \quad
  \Longrightarrow \quad \Omega(u) :=
  \frac{2s c_{\alpha,0}} {c_{\alpha,\beta}}
  \frac{|u|^\beta (u \cdot \sigma)}{1+\beta}.
\end{equation*}

\subsection{The particular case $\alpha = 2s$}

More explicit calculations are available when $\alpha = 2s$. In
this case $\beta=0$, $U(v) = c_0 v$ for some constant $c_0>0$,
and the eigenproblem is
\begin{align*}
  - \Delta_v^s \phi + c_0 v \cdot \nabla_v
  \phi - i \eta (v\cdot \sigma) \phi = \mu(\eta) \phi.
\end{align*}
Taking the Fourier transform (in the dual of the Schwarz space)
gives
\begin{align*}
  - \vert \xi \vert^{2s} \hat\phi - c_0 \xi \cdot \nabla_\xi
  \hat\phi + \eta \sigma \cdot \nabla_\xi \hat\phi  =
  (\mu(\eta) +c_0)\hat\phi
\end{align*}
or equivalently
\begin{align*}
  \left( \eta \sigma - c_0 \xi \right) \cdot \nabla_\xi
  \hat\phi = \left( \mu(\eta)+ c_0 + \vert \xi \vert^{2s} \right)
  \hat\phi. 
\end{align*}
The solution to this equation is given by
$\hat\phi = \delta_{c_0^{-1} \eta \sigma}$ and
$\mu(\eta) = - \vert c_0^{-1} \eta \sigma \vert^{2s} = c_0^{-2s}
\eta^{2s}$, which yields by inverse Fourier transform
$\phi_\eta(v) := \exp\left( i c_0^{-1} \eta (v \cdot \sigma)
\right)$. This agrees with the expression of $\Phi$ given above,
and allows us to compute $c_0 = \frac{1}{2s}$.

\section{Remarks and extensions}
\label{sec:appendix}

In Hypothesis~\ref{hyp:functional}, the equilibrium $\cM$ is an
explicit power law, and in particular is centered and even. We
discuss in this section the changes required for our proofs to
deal with more general $\cM$ that are (i) characterised by
\emph{asymptotic} power-law estimates rather than exact formulae,
and (ii) not necessarily even or centered. This means replacing
Hypothesis~\ref{hyp:functional} with:
\begin{namedhyp}{Hypothesis 1'}[Equilibria]
  \label{hyp:functional2}
  The equilibrium distribution satisfies
  \begin{equation}
    \label{eq:MpolyS}
    \cM = \wdot^{-(d+\alpha)} \mathcal{S}(v),
  \end{equation}
  where $\mathcal{S}$ is a slowly varying function, and the
  \textbf{generalised mass condition}~\eqref{eq:gen-mass}.
\end{namedhyp}

Slowly varying functions are non-vanishing measurable functions
that satisfy $\mathcal{S}(ax) \sim \mathcal{S}(x)$ as $x$ goes to
infinity, for any $a >0$. Some examples of slowly varying
functions are positive constants, functions that converge to
positive constants, logarithms and iterated logarithms.

\subsection{Equilibria characterised only asymptotically}
\label{subsec:asymp}

If one considers a \textit{centered} equilibrium
$\cM$ that satisfies Hypothesis~1', the proof of Theorem~\ref{theo:main} in Section~\ref{sec:duality} and the proof of
Lemma \ref{lem:existencespectral} in Section \ref{sec:branch} are
essentially unchanged. The formulas for $\mu_0$ and $\kappa$ in
Lemmas~\ref{lem:ratespectral} and \ref{lem:diffcoeff} are
slightly modified, and rely on the existence of a scaling limit
of
$\eta^{-\frac{d+\alpha}{1+\beta}} \cM ( u
\eta^{-\frac{1}{1+\beta}})$ as $\eta \to 0$, which follows from
Hypothesis~1'. Everything else remains unchanged and the
structures of the proofs in Sections \ref{sec:ratespectral} and
\ref{sec:diffcoeff} are the same. Rates of convergence will
depend on the form of $\mathcal S$.

\subsection{Non-centered equilibria}
\label{subsec:non-cent}

When the microscopic equilibrium $\cM(v)$ is not centered, it
results in a drift in the macroscopic equation. Our approach
however allows us to tackle such a situation, with the following
changes depending on whether this macroscopic drift is of higher,
comparable or smaller order than the resulting (fractional)
macroscopic diffusion. In view of Theorem~\ref{theo:main} in the
centered situation, we expect a macroscopic diffusion of order
$\zeta(\alpha,\beta) =
\min\left(2,\frac{\alpha_++\beta}{1+\beta}\right)$, and therefore
we expect the drift to be dominant when $\alpha >1$ and dominated
when $\alpha \in [0,1)$, with a borderline case at
$\alpha=1$. Observe that $\alpha=1$ is also the threshold for the
absolute convergence of the integral
$\int_{\R^d} (v\cdot \sigma) \cM(v) \dd v$ defining the
macroscopic drift.

Consider a solution $f$ in
$L^\infty([0,+\infty); L^2_{x,v}(\cM^{-1}))$ to~\eqref{eq:kinetic} and denote
\begin{equation*}
  f_\varepsilon(t,x,v) :=
  f\left( \frac{t}{\theta(\varepsilon)}, \frac{x}{\varepsilon}
    + \frac{\bar v_\varepsilon  t}{\theta(\varepsilon)}, v
  \right) \in L^\infty_t([0,+\infty); L^2_{x,v}(\cM^{-1}))
\end{equation*}
where $\varepsilon >0$ and $\theta(\varepsilon)$ is defined
in~\eqref{eq:scaling-function}, and where the \emph{velocity
  corrector} $\bar v_\varepsilon$ is defined by
\begin{equation}
  \label{eq:coeffdrift}
  \bar v_\varepsilon :=
  \begin{cases}
    \ds \frac{\ds \int_{\R^d} v \cM(v) \dd v}{\ds
    \int_{\R^d}\cM(v) \dd v}
    &\text{ when } \alpha > 1, \\[10mm]
    \ds  
    \left( \lim_{R \to \infty} \frac{1}{\ln(R)} \frac{\ds
    \int_{\R^d} v \chi_R(v)
    \cM(v) \dd v}{\ds \int_{\R^d} \chi_R(v)
    \cM(v) \dd v} \right) \frac{\vert\ln(\varepsilon)\vert}{1+\beta}
    &\text{ when } \alpha =1, \\[10mm]
    \ds 0
    & \text{ when } \alpha \in [0,1).
  \end{cases}
\end{equation}
The equation satisfied by $f_\varepsilon$ is
\begin{equation}
  \label{eq:gvarbiais}
  \theta(\varepsilon) \partial_t f_\varepsilon +
  \varepsilon (v - \bar v_\varepsilon) \cdot \nabla_x f_\varepsilon
  = \cL f_\varepsilon.
\end{equation}
With this definition of $f_\varepsilon$, Theorem~\ref{theo:main}
holds and yields the (fractional) diffusive limit of
$f_\varepsilon$. The changes in the proofs are as follows. The
arguments presented in Section \ref{sec:duality} are essentially
unchanged with a few modifications to obtain the scaling of the
eigenvalue resulting from \eqref{eq:gvarbiais}. We chose
$\bar v_\varepsilon$ in such a way that the dominant eigenmode
has the scaling obtained in Lemmas \ref{lem:ratespectral} and
\ref{lem:diffcoeff}. The new spectral problem to be considered in
the modified Lemma~\ref{lem:existencespectral} is
\begin{equation*}
  - L^* \phi_{\eta}
  - i \eta \left[ (v-\bar v_\varepsilon) \cdot \sigma \right]
  \phi_{\eta}= \mu(\eta)
  \wv^{-\beta} \phi_{\eta} \quad \text{ with } \quad 
  \int_{\R^d} \phi_{\eta}(v) \, \cM_\beta(v) \dd v = 1.
\end{equation*}
Line-by-line technical modifications are needed in the proof of
Lemmas~\ref{lem:ratespectral} and~\ref{lem:diffcoeff} due to the
additional drift but the procedure and method are preserved and
we do not repeat the arguments. Let us just explain why we define
the correction velocity $\bar{v}_\varepsilon$ in this way. The
spectral projector estimate of Section~\ref{sec:branch} follows
the same procedure, with \eqref{eq:F} replaced by
\begin{equation}
  \label{eq:Fbiais}
  - L^* F - i \eta \left[ (v-\bar v_\varepsilon) \cdot \sigma \right] F
  - z \wv^{-\beta} F = (v-\bar v_\varepsilon) \cdot \sigma.
\end{equation}
The $L^2$ estimate is unchanged and the crucial estimate
\eqref{eq:estim-m} remains true as long as 
\begin{equation*}
  q(R) := \int_{\R^d}
  \left[ v-\bar v_\varepsilon \right] \chi_R(v) \cM(v) \dd v \quad
  \text{ at } \quad R:=\eta^{-\frac{1}{1+\beta}}
\end{equation*}
is small compared with $\textsc{r}_1\eta^{-1}\Theta(\eta)$, when
$\textsc{r}_1$ is large enough. This implies that the influence
of the drift is smaller than the size of the fluid mode, which is
of order $\eta^{-1}\Theta(\eta)$. Recall that
\begin{equation}
  \label{eq:cases-theta}
  \frac{\Theta(\eta)}{\eta} :=
  \begin{cases}
    \eta
    &\text{when } \alpha > 2 + \beta,\\[2mm]
    \eta \vert \ln(\eta) \vert
    &\text{when } \alpha = 2 + \beta,\\[2mm]
    \eta^{\frac{\alpha - 1}{1+\beta}} &\text{when } 0 \le \alpha
    < 2 + \beta.
  \end{cases}   
\end{equation}

One can then prove that for all $\alpha \ge 0$, one has
$q(\eta^{-\frac{1}{1+\beta}}) \lesssim \eta^{\frac{\alpha -
    1}{1+\beta}}$, which proves that
$q(\eta^{-\frac{1}{1+\beta}})$ is small compared with
$\textsc{r}_1\eta^{-1}\Theta(\eta)$ when $\textsc{r}_1$ is large
enough.

\subsection{More general velocity fields}

One could replace the transport operator $v \cdot \nabla_x$ by a
more general $a(v) \cdot \nabla_x$, where $a$ is odd. All our
results and proofs can be extended, even though the scalings
found may be changed since the scaling of $\ell(R)$
in~\eqref{eq:lR} will be different. If $a(v)$ scales like
$\vert v \vert^\delta$, redoing the computations as in Section
\ref{sec:duality} and Section \ref{sec:branch} then one would
find
\begin{equation}
  \label{eq:formula-theta}
  \Theta(\eta) :=
  \begin{cases}
    \eta^2
    &\text{when } \alpha > 2\delta + \beta,\\[2mm]
    \eta^2 \vert \ln(\eta) \vert
    &\text{when } \alpha = 2\delta + \beta,\\[2mm]
    \eta^{\frac{\alpha+\beta}{\delta+\beta}} &\text{when }
    0 \le \alpha < 2\delta + \beta.
  \end{cases}
\end{equation}

An example is given by relativistic particles, for which
$a(v) := c \frac{v}{\sqrt{c^2+ v^2}}$, where $c$ is the speed of
light. Such transport operators are relevant to special
relativity, see for instance~\cite{Stewart} in physics
and~\cite{Glassey} in mathematics. There, $\Theta$ is given
by~\eqref{eq:formula-theta} with $\delta=0$.

\subsection{Kinetic Fokker-Planck equation with non gradient
  confining force}

All the results we obtain for the Fokker-Planck equation with
gradient force can be extended to Fokker-Planck operators with
non-gradient confining force at little expense. We chose not to
present this more general setting in the core of the paper to
stay consistent with the clean and simple
Hypothesis~\ref{hyp:functional} and to help with readability. It
is however possible to consider
\begin{equation*}
  \mathcal{L}(f)=
  \Delta_vf + \nabla_v\cdot\left(U\,f\right) \quad \text{
    where $U$ satisfies } \quad 
  \Delta_v \cM+ \nabla_v\cdot\left(U\,\cM\right)
  = 0,
\end{equation*}
provided that quantitative bounds are available on $U$ to ensure
it is comparable to the drift in the Fokker-Planck operator. The
analysis is then similar.

\subsection{The case $\alpha < 0$}
\label{ss:negative}

Assume in this subsection that $\beta>0$. Observe that the natural
condition for Hypothesis~\ref{hyp:coercivity} (weighted
coercivity inequality) to hold is $\alpha + \beta > 0$ and
include the cases of negative values of $\alpha$, and indeed the
construction of the fluid mode in
Lemmas~\ref{lem:existencespectral} and~\ref{lem:ratespectral} is
valid for $\alpha \in (-\beta,0)$. However, our main result,
Theorem~\ref{theo:main}, assumes that $\alpha \geq 0$ and this
restriction comes from the convergence estimates in
Section~\ref{sec:duality}: in the case $\alpha < 0$, it is not
possible to find initial conditions such that both error terms
$\partial_t E_1$ and $E_2$ vanish in the limit.

This obstacle is in fact structural. Let us consider the simplest
case of a scattering equation
\begin{equation*}
  \theta(\varepsilon) \partial_t h_\varepsilon + \varepsilon v
  \cdot \nabla_x h_\varepsilon = \nu(v) \left(  
    r_\varepsilon  - h_\varepsilon \right),
\end{equation*}
with $\nu(v) = \nu_0 \wv^{-\beta}$,
$\theta(\varepsilon) = \varepsilon^{\frac{\beta}{1+\beta}}$ and
$h_\varepsilon(0,x,v)$ radially symmetric in $v$ and satisfying
\begin{equation*}
  \hat h_\varepsilon(0,\xi,\varepsilon^{- \frac{1}{1+ \beta}} u)
  \to \hat H(\xi,u).
\end{equation*}
In the spirit of~\cite{M3} we compute the equation satisfied by
the Laplace-Fourier transform $\tilde r_\varepsilon(p,\xi)$
(Laplace in $t$ and Fourier in $x$):
\begin{equation*}
  \frac{1}{\theta(\varepsilon)} \left[ \int_{\R^d} \left( 1 -
      \frac{\nu(v)}{\theta(\varepsilon)p + i \varepsilon v \cdot
        \xi + \nu(v)} \right) \, \cM(v) \wv^{-\beta}\dd v \right]
  \tilde r_\varepsilon(p,\xi) = \int_{\R^d} \frac{\hat
    h_\varepsilon(0,\xi,v)\cM(v) \wv^{-\beta}
  }{\theta(\varepsilon)p + i \varepsilon v \cdot \xi + \nu(v)} \,
  \dd v
\end{equation*} 
Observe that (using that $\cM$ is even):
\begin{align*}
  & \frac{1}{\theta(\varepsilon)} \int_{\R^d} \left( 1 -
    \frac{\nu(v)}{\theta(\varepsilon)p + i \varepsilon v \cdot
    \xi + \nu(v)} \right) \, \cM(v) \wv^{-\beta}\dd v \\
  & \qquad \qquad = \int_{\R^d} \left( \frac{p + i  \varepsilon
    \theta(\varepsilon)^{-1}v \cdot \xi}{\theta(\varepsilon)p + i
    \varepsilon v \cdot \xi + \nu(v)} \right) \, \cM(v)
    \wv^{-\beta}\dd v \\
  & \qquad \qquad = \int_{\R^d} \left( \frac{p \left(  \theta(\varepsilon)p +
    \nu(v) \right) + \varepsilon^2 \theta(\varepsilon)^{-1} (v
    \cdot \xi)^2 }{\left(\theta(\varepsilon)p + \nu(v)\right)^2 +
    \left(\varepsilon v \cdot \xi\right)^2 } \right) \, \cM(v)
    \wv^{-\beta}\dd v
\end{align*}
and (using that $\hat h_\varepsilon(0,\xi,v)$ is radially
symmetric in $v$)
\begin{equation*}
  \int_{\R^d} \frac{\hat h_\varepsilon(0,\xi,v)\cM(v)
    \wv^{-\beta} }{\theta(\varepsilon)p + i \varepsilon v \cdot
    \xi + \nu(v)} \, \dd v 
  = \int_{\R^d} \frac{\hat h_\varepsilon(0,\xi,v)\cM(v)
    \wv^{-\beta} \left( \theta(\varepsilon)p  + \nu(v)
    \right)}{\left(\theta(\varepsilon)p + \nu(v)\right)^2 +
    \left(\varepsilon v \cdot \xi\right)^2} \, \dd v.
\end{equation*}
We then change variable
$v = \varepsilon^{- \frac{1}{1+ \beta}} u$ in these two integrals. Since then
$\nu(v) = \varepsilon^{\frac{\beta}{1+\beta}}\vert u
\vert_\varepsilon^{-\beta}$,
\begin{align*}
  & \int_{\R^d} \left( \frac{p \left(  \theta(\varepsilon)p + \nu(v)
    \right) + \varepsilon^2 \theta(\varepsilon)^{-1}
    (v \cdot \xi)^2 }{\left(\theta(\varepsilon)p +
    \nu(v)\right)^2 + \left(\varepsilon v \cdot
    \xi\right)^2 } \right) \, \cM(v) \wv^{-\beta}\dd
    v \\
  & = \int_{\R^d} \left( \frac{p \left(  \theta(\varepsilon)p +
    \varepsilon^{\frac{\beta}{1+\beta}}\vert u
    \vert_\varepsilon^{-\beta} \right) + \varepsilon^2
    \theta(\varepsilon)^{-1} (\varepsilon^{- \frac{1}{1+ \beta}}
    u \cdot \xi)^2 }{\left(\theta(\varepsilon)p +
    \varepsilon^{\frac{\beta}{1+\beta}}\vert u
    \vert_\varepsilon^{-\beta}\right)^2 + \left(\varepsilon
    \varepsilon^{- \frac{1}{1+ \beta}} u \cdot \xi\right)^2 }
    \right) \, \cM \left(\varepsilon^{- \frac{1}{1+ \beta}} u \right)
    \varepsilon^{\frac{\beta}{1+\beta}}\vert u
    \vert_\varepsilon^{-\beta} \varepsilon^{- \frac{d}{1+ \beta}}
    \dd u \\
  & = c_{\alpha,\beta} \, \varepsilon^{\frac{\alpha}{1+ \beta}}
    \int_{\R^d} \left( \frac{p \left( p + \vert
    u \vert_\varepsilon^{-\beta} \right) +  ( u \cdot \xi)^2
    }{\left( p +  \vert u \vert_\varepsilon^{-\beta}\right)^2 +
    \left( u \cdot \xi\right)^2 } \right)  \vert u
    \vert_\varepsilon^{-d-\alpha} \vert u
    \vert_\varepsilon^{-\beta}  \dd u\\
  & \underset{\varepsilon \to 0}{\sim} c_{\alpha,\beta} \,
    \varepsilon^{ \frac{\alpha}{1+ \beta}}   \int_{\R^d} \left(
    \frac{p \left( p + \vert u \vert^{-\beta} \right) +  ( u
    \cdot \xi)^2 }{\left( p +  \vert u \vert^{-\beta}\right)^2 +
    \left( u \cdot \xi\right)^2 } \right) \, \frac{{\rm d}
    u}{\vert u \vert^{d+\alpha+\beta}}
\end{align*}
and
\begin{align*}
  & \int_{\R^d} \frac{\hat h_\varepsilon(0,\xi,v)\cM(v)
    \wv^{-\beta} \left( \theta(\varepsilon)p  +
    \nu(v) \right)}{\left(\theta(\varepsilon)p +
    \nu(v)\right)^2 + \left(\varepsilon v \cdot
    \xi\right)^2} \, \dd v \\
  & = \int_{\R^d} \frac{\hat h_\varepsilon\left(0,\xi,\varepsilon^{-
    \frac{1}{1+ \beta}} u \right) \cM \left(\varepsilon^{- \frac{1}{1+
    \beta}}u \right) \varepsilon^{\frac{\beta}{1+\beta}}\vert u
    \vert_\varepsilon^{-\beta} \left( \theta(\varepsilon)p  +
    \varepsilon^{\frac{\beta}{1+\beta}}\vert u
    \vert_\varepsilon^{-\beta}
    \right)}{\left(\theta(\varepsilon)p +
    \varepsilon^{\frac{\beta}{1+\beta}}\vert u
    \vert_\varepsilon^{-\beta}\right)^2 + \left(
    \varepsilon^{\frac{\beta}{1+ \beta}} u \cdot \xi\right)^2} \,
    \varepsilon^{- \frac{d}{1+ \beta}} \dd u \\
  & = \int_{\R^d} \frac{\hat h_\varepsilon \left(0,\xi,\varepsilon^{-
    \frac{1}{1+ \beta}} u \right)  \varepsilon^{- \frac{d}{1+ \beta}}
    \cM \left( \varepsilon^{- \frac{1}{1+ \beta}} u \right) \vert u
    \vert_\varepsilon^{-\beta} \left(  p  + \vert u
    \vert_\varepsilon^{-\beta} \right)}{\left( p + \vert u
    \vert_\varepsilon^{-\beta}\right)^2 + \left(u \cdot
    \xi\right)^2} \,  \dd u \\
  & \underset{\varepsilon \to 0}{\sim} c_{\alpha,\beta} \,
    \varepsilon^{ \frac{\alpha}{1+ \beta}}   \int_{\R^d} \hat
    H(\xi,u)   \frac{ \vert u \vert ^{-\beta} \left(  p  +  \vert
    u \vert^{-\beta} \right)}{\left( p +  \vert u \vert
    ^{-\beta}\right)^2 + \left( u \cdot \xi\right)^2} \,
    \frac{{\rm d} u}{\vert u \vert^{d+\alpha}}.
\end{align*}
All in all, we deduce that $\tilde r_\varepsilon(p,\xi)$
converges towards $\tilde r(p,\xi)$ defined by
\begin{equation*}
  \left[ \int_{\R^d} \left( \frac{p \left( p + \vert u
          \vert^{-\beta} \right) +  ( u \cdot \xi)^2 }{\left( p +
          \vert u \vert^{-\beta}\right)^2 + \left( u \cdot
          \xi\right)^2 } \right) \,
    \frac{{\rm d} u}{\vert u \vert^{d+\alpha+\beta}} \right] \tilde r(p,\xi) =
  \int_{\R^d} \hat H(\xi,u)   \frac{ 
    \left(  p  +  \vert u \vert^{-\beta} \right)}{\left( p +
      \vert u \vert ^{-\beta}\right)^2 + \left( u \cdot
      \xi\right)^2} \, \frac{{\rm d} u}{\vert u
    \vert^{d+\alpha+\beta}}.
\end{equation*}
This is, written in Laplace-Fourier variables, an equation with
fractional derivative in space but which is non local in time,
instead of a fractional diffusion equation.
 
%%%%%%%%%%%%%%%%%%%%%%%%%%%%%%%%%%%%%%%%%%%%%%%%%%%%%%%%%%%%%%%%%%
\bibliographystyle{abbrv}

\bibliography{biblio}
\bigskip

\end{document}